\newcommand{\erw}{\mathds{E}}
\newcommand{\e}{\mathrm{e}}
\newcommand{\tr}{\mathrm{tr\,}}
\newcommand{\supp}{\mathrm{supp\,}}
\newcommand{\Ima}{\mathrm{Im\,}}
\newcommand{\Rea}{\mathrm{Re\,}}
\newcommand{\dist}{\mathrm{dist\,}}
\newcommand{\perm}{\mathrm{perm\,}}
\newcommand{\mO}{\mathcal{O}}
\newcommand{\C}{\mathds{C}}
\newcommand{\R}{\mathds{R}}
\newcommand{\LeftEqNo}{\let\veqno\leqno}
\newtheorem{thm}{Theorem}
\newtheorem{cor}[thm]{Corollary}
\newtheorem{prop}[thm]{Proposition}
\newtheorem{lem}[thm]{Lemma}
\newtheorem{defn}[thm]{Definition}
\newtheorem{rem}[thm]{Remark}
\newtheorem{hypo}[thm]{Hypothesis}
\numberwithin{equation}{section}
\title[Eigenvalue correlation of non-selfadjoint random operators]{Two point eigenvalue correlation for a class of non-selfadjoint operators 
under random perturbations}
\author{Martin Vogel} 
\address[Martin Vogel]{D\'epartement de Math\'ematiques - UMR 8628 CNRS, 
Universit\'e Paris-Sud, B\^atiment 425, F-91405 Orsay Cedex.}
\email{martin.vogel@math.u-psud.fr}
\begin{document}
\maketitle
\begin{abstract}
We consider a non-selfadjoint $h$-differential model operator $P_h$ in the 
semiclassical limit ($h\rightarrow 0$) subject to random perturbations with 
a small coupling constant $\delta$. Assume that 
$\e^{-\frac{1}{Ch}} < \delta \ll h^{\kappa}$ for constants $C,\kappa>0$ 
suitably large. Let $\Sigma$ be the closure of the range of the principal 
symbol.
\par
We study the $2$-point intensity measure of the random point process of 
eigenvalues of the randomly perturbed operator $P_h^{\delta}$ and prove 
an $h$-asymptotic formula for the average $2$-point density of eigenvalues.
With this we show that two eigenvalues of $P_h^{\delta}$ in the interior 
of $\Sigma$ exhibit close range repulsion and long range decoupling.
  \vskip.5cm
  \par\noindent \textbf{R{\'e}sum{\'e}}
Nous consid\'erons un op\'erateur diff\'erentiel non-autoadjoint 
$P_h$ dans la limite semiclassique ($h\rightarrow 0$) soumis 
\`a de petites perturbations al\'eatoires. De plus, nous imposons 
que la constant de couplage $\delta$ v\'erifie 
$\e^{-\frac{1}{Ch}} < \delta \ll h^{\kappa}$ pour certaines constantes 
$C,\kappa>0$ choisies assez grandes. Soit $\Sigma$ l'adh\'erence de l'image 
du symbole principal de $P_h$.
\par
Dans cet article, nous donnons une formule $h$-asymptotique pour la 
$2$-points densit\'e des valeurs propres en \'etudiant la mesure de 
comptage al\'eatoire des valeurs propres \`a l'int\'erieur de $\Sigma$. 
En \'etudiant cette densit\'e, nous prouvons que deux valeurs propres 
sont r\'epulsives \`a distance courte et ind\'ependantes \`a long 
distance. 
\end{abstract}
\section{Introduction}
\label{sec:introduction}
It is well known that the norm of the resolvent of non-normal operators 
can be very large even far away from the spectrum. Consequently, 
the spectrum of such operators can be highly unstable even under 
tiny perturbations, cf \cite{Da97,Da99,Da07,Pr08,Tr97}.
A way to quantify this zone of spectral instability is given by the 
$\varepsilon$-pseudospectrum. Following the work of L.N.~Trefethen and M.~Embree \cite{TrefEmbr}, 
the $\varepsilon$-pseudospectrum of a closed linear operator $A$ on 
a Banach space $X$ is defined by
\begin{equation*}
     \sigma_{\varepsilon}(A) 
     : = \left\{
	         z\in\mathds{C}\backslash\sigma(A);
		~\lVert (z-A)^{-1} \rVert > \frac{1}{\varepsilon} 
	 \right\}
	 \cup\sigma(A),
    \end{equation*}
where $\sigma(A)$ denotes the spectrum of $A$. Equivalently,
\begin{equation}
\label{pssp}
     \sigma_{\varepsilon}(A)  
     = \bigcup_{\substack{B\in\mathcal{B}(X) \\ 
     \lVert B\rVert < \varepsilon}}
	\sigma(A+B).
\end{equation}
In view of \eqref{pssp} it is natural to study the spectrum of such operators 
under small random perturbations. One line of recent interest 
has focused on the case of elliptic (pseudo-)differential operators 
subject to small random perturbations:
\par 
A series of papers by W.~Bordeaux-Montrieux, M.~Hager and J.~Sj\"ostrand 
\cite{Ha06,BM,Ha06b,BoSj09,HaSj08,SjAX1002,Sj08,Sj09} established a probabilistic 
Weyl law in the interior of the pseudospectrum  for a large class of elliptic 
(pseudo-)differential operators subject to small random perturbations in the semiclassical or 
high energy limit. 
Furthermore, a similar result 
has been obtained by T. Christiansen and M. Zworski for certain randomly perturbed Toeplitz operators 
in \cite{ZwChrist10}.
\par
In \cite{Vo14}, we considered a class of elliptic semiclassical differential 
operators introduced by M.~Hager \cite{Ha06} and obtained a precise $h$-asymptotic description of the 
average density of eigenvalues in the entire pseudospectrum by studying the first moment 
of linear statistics of the random point process of eigenvalues. In particular, we showed that there is an accumulation of 
eigenvalues in a small neighbourhood of the boundary of the pseudospectrum, leading to 
a break down of the Weyl law. 
\par
However, there have not yet been any results concerning the statistical correlation 
between the eigenvalues. The purpose of this paper is, therefore, to study the $2$-point 
eigenvalue correlation in the case of Hager's model operator (cf. \cite{Ha06}): 
\paragraph{\textbf{Hager's model operator}} 
Let $0 < h \ll 1$, we consider on $S^1=\mathds{R}/2\pi\mathds{Z}$ 
the semiclassical operator $P_h:L^2(S^1)\rightarrow L^2(S^1)$ 
given by
  \begin{equation}
  \label{eqn:defnModelOperator}
    P_h := hD_x + g(x), 
    \quad 
    D_x := \frac{1}{i}\frac{d}{dx}, 
    \quad
    g\in\mathcal{C}^{\infty}(S^1;\mathds{C})
  \end{equation}
where we assume that $g\in\mathcal{C}^{\infty}(S^1;\mathds{C})$ is such that 
$\Ima g$ has exactly two critical points and they are non-degenerate, 
one minimum and one maximum, say at $a,b\in S^1$, with 
$\Ima g(a) < \Ima g(b)$. 
\\
\par
We denote the semiclassical principal symbol of $P_h$ by 
  \begin{equation}
  \label{eq_I3}
   p(x,\xi) = \xi + g(x), \quad (x,\xi) \in T^*S^1.
  \end{equation}
The Poisson bracket of $p$ and $\overline{p}$ is given by 
  \begin{equation*}
   \{p,\overline{p}\} = 
	 {p}'_{\xi}\cdot{\overline{p}}'_x - 
	{p}'_x\cdot{\overline{p}}'_{\xi}.
  \end{equation*}
The spectrum of $P_h$ is discrete with simple eigenvalues, given by 
  \begin{equation}\label{ad_b5}
   \sigma(P_h) = \{z\in\mathds{C}:~z=\langle g\rangle +kh,~k\in\mathds{Z}\},
   \quad \langle g\rangle := (2\pi)^{-1} \int_0^{2\pi} g(y)dy.
  \end{equation} 
\paragraph{\textbf{Zone of spectral instability}} 
For semiclassical pseudo-differential operators there are various ways to 
quantify the zone of spectral instability: Following \cite{Da99,Zw01,NSjZw04}, 
we define for $p$ as in \eqref{eq_I3}
    \begin{equation*}
     \Sigma:= \overline{p(T^{*}S^1)} \subset \mathds{C}.
    \end{equation*}
\noindent In the case of (\ref{eqn:defnModelOperator}) and 
(\ref{eq_I3}) $p(T^{*}S^1)$ is already closed 
due to the ellipticity of $P_h$. Next, 
for $z\in\mathring{\Sigma}$, consider the equation $z=p(x,\xi)$. 
It has precisely two solutions 
$\rho_{\pm}:=(x_{\pm},\xi_{\pm})$ where $x_{\pm}$ are given by
\begin{equation}
 \label{eqn:xpm}
 \Ima g(x_{\pm})  = \Ima z, ~ 
 \text{with }
 \pm \Ima g'(x_{\pm}) < 0 
\end{equation}
and $\xi_{\pm} = \Rea z - \Rea g(x_{\pm})$. Since we have 
that $\{ \Rea p,\Ima p\}(\rho_+(z))<0$ for all 
$z\in\Omega\Subset\mathring{\Sigma}$ it follows from the work 
of N.~Dencker, J.~Sj\"ostrand and M.~Zworski \cite{NSjZw04}
that we can construct $h^{\infty}$-quasimodes $u\in L^2(S^1)$ of $P_h$ 
with semiclassical wave front set $\mathrm{WF}_h(u) = \{\rho_+(z)\}$ 
(in the case of \eqref{eqn:defnModelOperator} we can even construct 
exponentially accurate quasimodes even though it is not analytic cf \cite{Ha06,Vo14}).
We recall that for $v=v(h)$, $\lVert v\rVert_{L^2}(S^1)= \mO(h^{-N})$, for some 
fixed $N$, the semiclassical wave front set of $v$ is defined by 
\begin{equation*}
 \mathrm{WF}_h(v):=\complement
 \left\{(x,\xi)\in T^*S^1 : \exists 
 a\in\mathcal{S}(T^* S^1),~a(x,\xi)=1,~
 \lVert a^wv\rVert_{L^2}(S^1)=\mO(h^{\infty})
 \right\}
\end{equation*}
where $a^w$ denotes the Weyl quantization of $a$. 
\par
Alternatively, it has been shown in 
\cite{Ha06,SjAX1002,Vo14} that for all $\Omega\Subset\mathring{\Sigma}$ and all 
$z\in\Omega$ 
\begin{equation*}
 \lVert (P_h - z)^{-1}\rVert \geq C_1\e^{\frac{1}{C_2 h}},
\end{equation*}
with $C_1,C_2>0$ constants that only depend on $\Omega$.
This implies that such an $\Omega$ is inside the 
$\e^{-1/Ch}$-pseudospectrum of $P_h$.  
\\
\par
Next, by the natural projection 
$\Pi:\mathds{R}\rightarrow S^1=\mathds{R}/2\pi\mathds{Z}$ and a 
slight abuse of notation we identify the points $x_{\pm},a,b\in S^1$ 
with points $x_{\pm},a,b\in\mathds{R}$ such that 
$x_- -2\pi < x_+ < x_-$ and $b-2\pi < a < b$. Furthermore, we will 
identify $S^1$ with the interval $[b-2\pi,b[$. 
\paragraph{\textbf{Adding a random perturbation}}
We are interested in the following random perturbation of 
$P_h$:
  \begin{equation}
  \label{eqn:DefPertP}
    P_h^{\delta} := P_h +\delta Q_{\omega}, \quad 0\leq \delta \ll 1,
  \end{equation}
where $Q_{\omega}$ is an integral operator $L^2(S^1)\to L^2(S^1)$ of 
the form
\begin{equation}\label{defn:PertIntOp}
    Q_{\omega} u(x) := \sum\limits_{|j|,|k|\leq \left
    \lfloor\frac{C_1}{h}\right\rfloor} \alpha_{j,k} (u|e^k)e^j(x).
  \end{equation}
Here, $\lfloor x\rfloor :=\max\{n\in\mathds{N}:~x\geq n\}$ 
for $x\in\mathds{R}$, $C_1>0$ is large enough, 
$e^k(x):=(2\pi)^{-1/2} \e^{ikx}$, $k\in\mathds{Z}$, and 
$\alpha_{j,k}$ are complex valued independent and identically 
distributed random variables with complex Gaussian distribution law 
$\mathcal{N}_{\mathds{C}}(0,1)$. Since, $Q_{\omega}$ is a compact 
operator, the spectrum of $P_h^{\delta}$ is discrete. 
\par
We recall that a random variable $\alpha$ has complex 
Gaussian distribution law $\mathcal{N}_{\mathds{C}}(0,1)$ if 
\begin{equation*}
 \alpha_*(P(d\omega)) = \frac{1}{\pi}\e^{-\alpha \overline{\alpha}} L(d\alpha)
\end{equation*}
where $L(d\alpha)$ denotes the Lebesgue measure on $\C$ and $\omega$ is the random 
parameter living in the sample space $\mathcal{M}$ of a probability space 
$(\mathcal{M},\mathcal{A},P)$ with $\sigma$-algebra $\mathcal{A}$ and 
probability measure $P$. $\alpha\sim \mathcal{N}_{\mathds{C}}(0,1)$ implies that 
$\alpha$ has expectation $0$ and variance $1$, since
\begin{equation*}
 \erw[ \alpha ] = 0, \quad \text{and} \quad  \erw\left[|\alpha|^2\right] = 1.
\end{equation*}
Here, $\erw[\cdot]$ denotes the expectation. The Markov inequality 
yields that for $C>0$ 
large enough
\begin{equation}\label{eq_ad5.1}
   \lVert Q_{\omega}\rVert_{\mathrm{HS}} \leq \frac{C}{h}, 
   \quad
   \text{with probability } 
   \leq 1 - \e^{-\frac{1}{h^2}}.
  \end{equation}
This, has been obtained as well by W.~Bordeaux-Montrieux in \cite{BM}.
Hence, we restrict our probability space to a open ball $B(0,R)\Subset\C^N$, 
with $N:=(2\left\lfloor\frac{C_1}{h}\right\rfloor +1)^2$, of radius $R=C/h$ 
and centered at $0$, to obtain a uniform (in the random variables) bound 
on $Q_{\omega}$.
\\
\par
In this paper we are interested in the eigenvalues of 
$P_h^{\delta}$ in the interior of the pseudospectrum. Therefore, 
we make the following assumptions on $\Omega\Subset\Sigma$:
\begin{hypo}\label{Hyp:H6}
 We assume that there exists a $C>1$ such that 
\begin{equation}\label{eq_i35}
    \Omega\Subset\mathring{\Sigma} ~ 
    \text{is open, convex, relatively compact and simply connected with } 
    \dist(\Omega,\partial\Sigma) > \frac{1}{C}.
 \end{equation}
\end{hypo}
It will be very useful to give bounds on the coupling constant $\delta$ 
in terms of the imaginary part of the action between $\rho_+(z)$ and 
$\rho_-(z)$, $z\in\Omega$ as in \eqref{eq_i35} (cf \eqref{eqn:xpm}), 
defined by:
\begin{equation}\label{def_act}
   S:=\min\left( \Ima \int_{x_+}^{x_-}(z-g(y))dy,\Ima \int_{x_+}^{x_--2\pi}(z-g(y))dy \right).
\end{equation}
\begin{hypo}\label{Hyp:H7}
 The coupling constant $\delta>0$ in (\ref{eqn:DefPertP})satisfies  
 \begin{equation}\label{Hyp:Delta}
    \delta := \delta(h) 
	:= \sqrt{h}\e^{-\frac{\epsilon_0(h)}{h}}
 \end{equation}
 with $ \left(\kappa - \frac{1}{2}\right)h\ln (h^{-1}) +Ch \leq \epsilon_0(h) < 
 \min_{z\in\overline{\Omega}}S(z)/C$ 
 for some $\kappa>52/10$ and $C>0$ large and where the last inequality 
 is uniform in $h>0$. Equivalently, $\delta$ satisfies the inequality
 \begin{equation*}
   \sqrt{h}\exp\left\{-\frac{\min_{z\in\overline{\Omega}}S(z)}{Ch}\right\} 
    <  \delta \ll h^{\kappa}.
  \end{equation*}
\end{hypo}
\begin{rem}
We chose these hypotheses because the aim of this paper is to treat the 
two-point eigenvalue density and correlation in the interior of the 
pseudospectrum. Hypotheses \ref{Hyp:H6} and \ref{Hyp:H7} prevent us from reaching 
the pseudospectral boundary since either we need to allow for 
sufficiently small coupling constants which would bring the boundary 
of the pseudospectrum in the interior of $\Omega$ (with 
$\dist(\Omega,\partial\Sigma)>1/C$ for some $C>0$),	 
or we need to allow sets $\Omega\Subset\Sigma$ with 
$\dist(\Omega,\partial\Sigma)\geq Ch^{2/3}$. 
  The two-point interaction close to the pseudospectral boundary remains 
  an interesting open problem.
\end{rem}
\section{Main Results}
\label{sec:MainResult}
We are interested in the $2$-point 
correlation of eigenvalues of the perturbed operator 
$P_h^{\delta}$. Therefore, we study the 
$2$-point intensity measure $\nu$, given by 
\begin{align}
 \label{eqn:NU}
  \erw \left[
     \sum_{\substack{z,w\in\sigma(P_h^{\delta}) \\
		     z\neq w}}
     \varphi(z,w)\mathds{1}_{B(0,R)}\right]
  = \int_{\C^2}\varphi(z,w) d\nu(z,w), 
  \quad 
  \varphi\in \mathcal{C}_0(\Omega^2).
\end{align}
\begin{rem} 
The above approach is more classical in the study of zeros of 
random polynomials and Gaussian analytic functions; we refer 
the reader to the works of B. Shiffman and S. Zelditch 
\cite{SZ03,ShZe08,ShZe99,Sh08}, M. Sodin \cite{So00} an the book 
\cite{HoKrPeVi09} by J. Hough, M. Krishnapur, Y. Peres and 
B. Vir\'ag.
\end{rem}
We begin by giving an $h$-asymptotic formula for its Lebesgue 
density valid at a distance $\gg h^{3/5}$ from the diagonal. For 
$\Omega$ as in \eqref{eq_i35} and $C_2 >0$, we define the set 
\begin{equation}\label{eq_i27}
 D_h(\Omega,C_2) := \{(z,w)\in\Omega^2;~ |z-w| \leq C_2 h^{3/5} \}.
\end{equation}
Before, we state the main result, let us recall that has been shown in 
\cite{Ha06,Vo14} that the direct image $p_*(d\xi\wedge dx)$ 
of the symplectic volume form $d\xi\wedge dx$ on $T^*S^1$ is absolutely continuous 
with respect to the Lebesgue measure on $\C$ and its Radon-Nikodym derivative is 
\begin{equation}\label{eq_i26}
 \sigma(z):=\frac{ p_*(d\xi\wedge dx)}{L(dz)} 
  = \left(\frac{2i}{\{p,\overline{p}\}(\rho_+(z))}
  +\frac{2i}{\{\overline{p},p\}(\rho_-(z))}
  \right).
\end{equation}
\begin{thm}\label{thm_H2}
 Let $\Omega\Subset\Sigma$ be as in \eqref{eq_i35}. Let $\delta>0$ be 
 as in Hypothesis \ref{Hyp:H7}. Let $\nu$ be the 
 measure defined in \eqref{eqn:NU}
 and let $\sigma(z)$ be as in \eqref{eq_i26}. Then, for 
 $|z-w|\leq 1/C$ with $C>1$ large enough, there exist smooth functions
 \begin{itemize}
  \item  $\sigma_h(z,w) = \sigma\left(\frac{z+w}{2}\right) + \mO(h)$,
  \item  $ K(z,w;h) = \sigma_h(z,w)\frac{|z-w|^2}{4h}(1 + \mO(|z-w|+h^{\infty}))$,
  \item  $D^{\delta}(z,w;h) = \frac{\Lambda(z,w)}{(2\pi h)^{2}\left(1- \e^{-2K}\right)}
    \left(1 
      + \mO\!\left(\delta h^{-\frac{8}{5}}\right)\right)
    + \mO\!\left(\e^{-\frac{D}{h^2}}\right)$, with 
  \begin{equation*}
  \begin{split}
  \Lambda(z,w;h) 
   = &\sigma_h(z,z)\sigma_h(w,w) + \sigma_h(z,w)^2(1 + \mO(|z-w|))
	      \e^{-2K}
	  \\
	  &+ 
	  \frac{\sigma_h(z,w)^2(1 + \mO(|z-w|)) }{\e^{K}\sinh(K)}
	  \left(2K^2 \coth(K)  - 4K  \right) 
	  +\mO\!\left(h^{\infty} + \delta h^{-\frac{32}{10}}\right)
 \end{split}
 \end{equation*} %
 \end{itemize}
 and there exists a constant $c>0$ such that 
 for all  $\varphi\in\mathcal{C}_0^{\infty}(\Omega^2\backslash D_h(\Omega,c))$  
 \begin{equation*}
    \int_{\C^2} \varphi(z,w) d\nu(z,w)
    = 
    \int_{\C^2} \varphi(z,w)D^{\delta}(z,w;h) L(d(z,w)).
 \end{equation*}
\end{thm}
By this result we see that in the interior of the pseudospectrum 
the leading terms of the $2$-point density of eigenvalues depends 
only on the symplectic volume form in phase space. This agrees 
very well with previous results of Hager, Sj\"ostrand and Vogel 
\cite{Ha06,SjAX1002,Vo14} saying that in the 
interior of the pseudospectrum the probabilistic and average density 
of eigenvalues depends only on the symplectic volume form.  
\par
Let us stress once more that due to the assumptions on $\Omega$ 
and $\delta$, the formula for the $2$-point density presented in 
Theorem \ref{thm_H2} is not valid close to the pseudospectral boundary. 
However, in view of the results presented in \cite{Vo14}, we would 
expect the $2$-point density to change drastically close to the pseudospectral 
boundary, but for now this remains an open problem.
\begin{rem}\label{remA}
Avoiding $D_h(\Omega,c)$, cf. \eqref{eq_i27}, with the support of the 
test functions $\varphi$ in Theorem \ref{thm_H2} is due to a technical 
difficulty in the proof, since there is some degeneracy due to the 
error terms when $|z-w|$ is too small, see Proposition 
\ref{prop:GramInvert} below. 
\par
However, having a formula for the $2$-point density of eigenvalues outside 
$D_h(\Omega,c)$ is sufficient to include the study of the close range 
correlation between two eigenvalues up to a certain distance, cf. Theorem \ref{prop:H10} 
and \ref{prop:H11}. 
\end{rem}
\subsection{Asymptotic regimes of the density}
Using the formula obtained in Theorem \ref{thm_H2}, we will prove that 
two eigenvalues of $P_h^{\delta}$ exhibit the following interaction:
\begin{thm}\label{prop:H10}
 Under the hypothesis of Theorem \ref{thm_H2}, we have that 
 \begin{itemize}
  \item for $h^{\frac{4}{7}}\ll |z-w| \ll h^{\frac{1}{2}}$ 
         \begin{equation*}
          D^{\delta}(z,w;h) = \frac{\sigma_h^3(z,w)|z-w|^2}{(4\pi)^2 h^3}\left(1+ 
         \mO\!\left(\frac{|z-w|^2}{h}+\delta h^{-\frac{8}{5}}\right)\right);
         \end{equation*}
  \item for $|z-w| \gg (h\ln h^{-1} )^{\frac{1}{2}}$
         \begin{equation*}
           D^{\delta}(z,w;h) = \frac{\sigma(z)\sigma(w) + \mO(h)}{(2h\pi)^{2}}
          \left(1 + \mO\!\left( \delta h^{-\frac{8}{5}}\right)\right).
         \end{equation*}
 \end{itemize}
\end{thm}
Let us give some comments on this result: The fact that we cannot analyze the eigenvalue 
interaction completely up to the diagonal is due to some technical difficulties. In 
the above theorem, two eigenvalues of the perturbed operator 
$P_h^{\delta}$ show the following types of interaction:
\begin{description}
 \item[\textbf{Short range repulsion}] The two-point density decays quadratically in 
       $|z-w|$ if two eigenvalues are too close, and in view of the numerical simulations 
       presented in Section \ref{S_NumSim} we conjecture that this 
       is the case for all $z,w$ as above satisfying $0<|z-w| \ll h^{\frac{1}{2}}$.
 \item[\textbf{Long range decoupling}] If the distance between two eigenvalues is 
       $\gg (h\ln h^{-1} )^{\frac{1}{2}}$ the two-point density is given by the 
       product of two one-point densities (cf. \eqref{eq_i30}). This means that 
       at this distance two eigenvalues are placed in average in an uncorrelated 
       way.
\end{description}
\subsection{$2$-point correlation function}
M. Hager \cite{Ha06} showed, using subharmonic estimates, that, 
with  probability close to $1$, the 
eigenvalues of the perturbed operator $P_h^{\delta}$ contained in $\Omega$ 
(as in Hypothesis \ref{Hyp:H6}) follow a Weyl law, i.e. 
\begin{equation*}
   \#(\sigma(P_h^{\delta})\cap\Omega) \sim 
   \frac{1}{2\pi h}\mathrm{vol}(\{\rho\in T^*S^1; p(\rho)\in\Omega\}).
\end{equation*}
In \cite{Vo14}, we considered the random 
point process given by eigenvalues of $P_h^{\delta}$ : 
  \begin{equation}
  \label{defn:PP}
   \Xi := \sum_{z\in\sigma(P_h^{\delta})}\delta_z.
  \end{equation}
where the eigenvalues are counted according to their multiplicities 
and $\delta_z$ denotes the Dirac-measure at $z$. 
\par
We studied in \cite{Vo14} the first moment of linear statistics of 
$\Xi$ with the random variables $\alpha$ 
restricted to a ball $B(0,R)\subset\C^N$ with $R=C/h$, 
i.e. the measure $\mu_1$ defined by 
\begin{equation*}
 \erw[\Xi(\varphi)\mathds{1}_{B(0,R)}] 
           = \int_{\C}\varphi(z) d\mu_1(z)
\end{equation*}
for all $\varphi\in\mathcal{C}_0(\Omega)$ 
with $\Omega\Subset\Sigma$ such that   
$\dist(\Omega,\partial\Sigma)\gg h^{2/3}$. 
For $\Omega$ as in Hypothesis \ref{Hyp:H6}, 
Theorem 2.11 in \cite{Vo14} implies that 
       \begin{equation}\label{eq_i30.5}
        \erw[\Xi(\varphi)\mathds{1}_{B(0,R)}] = 
        \int \varphi(z)d(z;h)L(dz), \quad \forall
        \varphi\in\mathcal{C}_0(\Omega),
       \end{equation}
       where 
       \begin{equation}\label{eq_i30}
        d(z;h) = \frac{1}{2\pi h} \sigma(z) + \mO(1), 
        \quad \sigma(z) \text{ is as in } \eqref{eq_i26}.
       \end{equation}
In other words, the average density of eigenvalues in $\Omega$ 
is up to first order determined by symplectic volume form in phase space.
\par
It follows from \eqref{eq_i30}, \eqref{eq_i26} that for $h>0$ small enough 
$d(z;h)>0$ for all $z\in\Omega$ as in \eqref{eq_i35}. Hence, under the assumptions of 
Theorem \ref{thm_H2}, the $2$-point correlation function of the eigenvalues 
of $P_h^{\delta}$, is well defined and given by 
  \begin{equation*}
   \kappa^{\delta}(z,w;h) :=  \frac{D^{\delta}(z,w;h)}{d(w;h)d(z;h)}.
  \end{equation*}
 \begin{thm}\label{prop:H11}
 Under the hypothesis of Theorem \ref{thm_H2}, we have 
 that for $(z,w)\in\Omega^2\backslash D_h(\Omega,c)$ as 
 in Theorem \ref{thm_H2} that
  \begin{equation*}
  \begin{split}
 \kappa^{\delta}(z,w;h) 
   = &
   \frac{1+\mO(h)}{\left(1- \e^{-2K}\right)}\bigg(
   1+(1 + \mO(|z-w|)) \e^{-2K}+\frac{(1 + \mO(|z-w|)) }{\e^{K}\sinh(K)}
	  \left(2K^2 \coth(K)  - 4K  \right) 
	  \\
	  &
	  +\mO\!\left(h^{\infty} + \delta h^{-\frac{32}{10}}\right) \bigg )
    + \mO\!\left(\e^{-\frac{D}{h^2}}\right).
 \end{split}
 \end{equation*}
 Moreover, we have the following asymptotic behaviour of the 
 $2$-point correlation function $\kappa^{\delta}(z,w;h) $:
 \begin{itemize}
  \item for $h^{\frac{4}{7}}\ll |z-w| \ll h^{\frac{1}{2}}$ 
         \begin{equation*}
          \kappa^{\delta}(z,w;h)  = \frac{\sigma_h(z,w)|z-w|^2}{4 h}\left(1+ 
         \mO\!\left(\frac{|z-w|^2}{h}+\delta h^{-\frac{8}{5}}\right)\right) \ll 1;
         \end{equation*}
  \item for $|z-w| \gg (h\ln h^{-1} )^{\frac{1}{2}}$
         \begin{equation*}
           \kappa^{\delta}(z,w;h) = 1 + \mO( h ).
         \end{equation*}
 \end{itemize}
\end{thm}
In the above Theorem we see that two eigenvalues of $P_h^{\delta}$ 
shows the following behaviour:
\begin{description}
 \item[\textbf{Short range repulsion}] The $2$-point correlation function 
 $\kappa^{\delta}(z,w;h) $ decays quadratically in $|z-w|$ if the distance 
 between $z$ and $w$ is smaller than a term of order $h^{\frac{1}{2}}$. 
 It is thus less likely to find two eigenvalues close together. Furthermore,  
 we see by \eqref{eq_i26} that $\sigma(z)$ grows towards the boundary 
 of $\Sigma$, hence the short range repulsion is weaker for $\Omega$ 
 closer to the boundary of $\Sigma$, as we expected from the numerical 
 simulations presented in \cite{Vo14}, see Figure 4 therein.
 \par
The fact that we cannot analyze close range correlation up to the diagonal 
is due to a degeneracy resulting from error terms, cf. Remark \ref{remA} and the 
proofs of Theorem \ref{prop:H11} and \ref{prop:H10}. However, the conclusions of Theorem 
\ref{prop:H11} allow for the study of the scaling limit of the $2$-point 
correlation function, which yields the limiting local $2$-point statistics 
of eigenvalues of $P_h^{\delta}$, after re-scaling distances between 
eigenvalues to be independent of $h$, cf. Section \ref{se_SL} 
and Corollary \ref{cor1}.
 \item[\textbf{Long range decoupling}] If the distance between $z$ and $w$ is larger 
       than a term of order $(h\ln h^{-1})^{\frac{1}{2}}$, the $2$-point correlation function 
       $\kappa^{\delta}(z,w;h) $ is given up to a small error by $1$. 
       Hence, we see that at these distances two eigenvalues of $P_h^{\delta}$ are up to a 
       small error uncorrelated. 
\end{description}
\begin{rem}
Recall from the discussion after \eqref{eqn:DefPertP} that in 
this paper we focus on the case where the random perturbation is 
given by a random matrix whose entries are independent and 
identically distributed complex Gaussian random variables. 
As supported by numerical experiments 
(cf. Section \ref{S_NumSim}) we expect Theorem \ref{prop:H11} to 
hold for a much more general class of random variables as long as 
the perturbation is of the form \eqref{defn:PertIntOp}.
\par
Questions concerning the universality of the result of Theorem \ref{prop:H11} 
in the case of small random perturbations of a more general class of 
(pseudo-)differential operators are currently under investigation by the author. 
We expect the type of perturbation (by random matrix or by random potential) 
rather than its probability distribution 
to be decisive, since although in both cases we can obtain a probabilistic Weyl law for 
the eigenvalues, see \cite{Ha06b,HaSj08,Sj08,Sj09}, numerical experiments 
suggest that the $2$-point correlation functions in both cases differ. 
\end{rem}
\subsection{Scaling limit of the $2$-point correlation function}\label{se_SL}
We can use Theorem \ref{prop:H11} to study the limiting local $2$-point correlation 
function in the interior of the pseudospectrum. Therefore, let $\Omega$ be as in \eqref{eq_i35}, and fix a $z_0\in\Omega$. Let $d(z;h)$ be as in \eqref{eq_i30} and set $d_0:=d(z_0;h)\asymp h^{-1}$. Let $\kappa^{\delta}(z,w;h)$ be as in Theorem \ref{prop:H11}, let $W$ be a compact 
subset of $\{(z,w) \in\C^2; z\neq w\}$ and consider, for $h>0$ small enough, 
\begin{equation}\label{eq_b1}
	\widetilde{\kappa}_h(z,w) : = \kappa^{\delta}(z_0+d_0^{-1/2}z,z_0+d_0^{-1/2}w;h), 
	\quad (z,w)\in W.
\end{equation}
This is well defined since for $h>0$ small enough $(z_0+d_0^{-1/2}z,z_0+d_0^{-1/2}w) \in \Omega^2\backslash D_h(\Omega,c)$ (see Theorem \ref{prop:H11}) for all $(z,w)\in W$. 
Similarly to the discussion before Theorem \ref{prop:H11}, we notice that we can view 
$\widetilde{\kappa}_h(z,w)$ as the $2$-point correlation function of the random point 
process of the re-scaled eigenvalues of $P_h^{\delta}$:
  \begin{equation}\label{eq_b2}
   \widetilde{\Xi} := \sum_{z\in\sigma(P_h^{\delta})}\delta_{(z-z_0)d_0^{1/2}}.
  \end{equation}
When considering the first moment of linear statistics of $\widetilde{\Xi}$ 
(cf. \eqref{eq_i30.5}, \eqref{eq_i30}) we see that we have re-scaled distances 
in such a way that the leading order of the average density of eigenvalues 
(after re-scaling) is independent of $h$. 
\par
From Theorem \ref{prop:H11} we obtain the following result.
\begin{cor}\label{cor1} For any compact $W\Subset \{(z,w) \in\C^2; z\neq w\}$ we have that
\begin{equation*}
   \lim\limits_{h\to0^+}\widetilde{\kappa}_h(z,w) = \kappa\left(\frac{\pi}{2}|z-w|^2\right), 
   \quad (z,w)\in W, 
\end{equation*}
uniformly on $W$, where 
\begin{equation}\label{eq_b3}
	\kappa(t) = \frac{(\sinh^2t+t^2)\cosh t - 2t\sinh t}{\sinh^3t}, \quad t=\frac{\pi}{2}|z-w|^2.
\end{equation}
\end{cor}
Let us remark that the scaling limit $2$-point correlation function is independent of $z_0$ and 
depends only on the distance between points. Similar 
to the asymptotic regimes presented in Theorem \ref{prop:H11}, we obtain short 
range repulsion between two re-scaled eigenvalues of $P_h^{\delta}$ since, by Taylor 
expansion, $\kappa(t)=t(1+\mO(t^2))$, as $t\to 0^+$, which shows that the scaling 
limit $2$-point correlation function decays quadratically for small distances between 
$2$ points (see Figure \ref{fig_4}).
\par
Similarly, we have long range decorrelation between two re-scaled eigenvalues of $P_h^{\delta}$ since, by Taylor expansion, $\kappa(t)=1+\mO(t^2\e^{-2t})$, as $t\to +\infty$.
\par
The same scaling limit  $2$-point correlation function $\kappa$ has been found as well by 
J.H.~Hannay \cite{Han95} in the case of zeros of certain random polynomials and by 
P.~Bleher, B.~Shiffman and S.~Zelditch \cite{BlShZe00} in the case of random 
holomorphic sections of the Nth power of a positive Hermitian line bundle over a 
compact complex manifold. 
\begin{figure}[h]
         \centering 
         \includegraphics[width=0.6\textwidth]{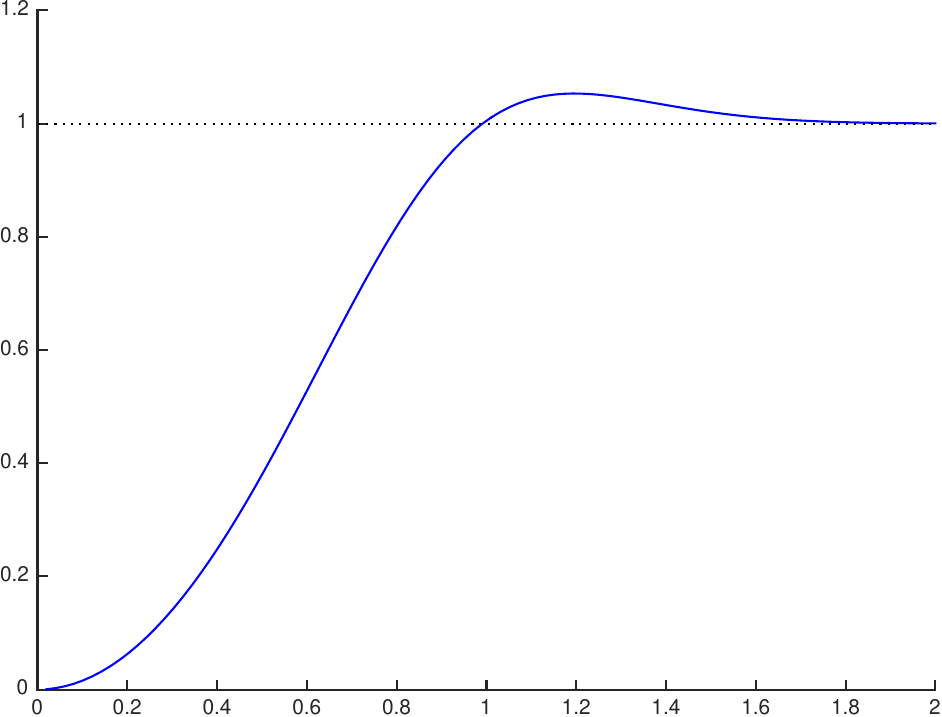}
        \caption{The pair correlation function $\kappa(\frac{\pi}{2}|z-w|^2)$, as a 
        		function of the distance, as given in Corollary \ref{cor1}. A similar figure  
		can be found in \cite{Han95,BlShZe00}}
        \label{fig_4}
\end{figure}
\subsection{Numerical simulation}\label{S_NumSim}
To illustrate Theorem \ref{prop:H11} and Corollary \ref{cor1}, we have numerically determined 
the $2$-point correlation function of the eigenvalues of a discretisation of the operator 
$hD_x + \e^{-ix}$, with $h=2\cdot 10^{-3}$, perturbed with a random complex Gaussian matrix 
with coupling constant $\delta=2\cdot 10^{-12}$. The left hand side of Figure \ref{fig_2} shows one realisation of these 
eigenvalues and the region, where we determine the 2-point correlation function, staying 
inside of the pseudospectrum and away from the effects caused by the finite dimensional 
approximation of the operator. The right hand side shows the eigenvalues in the region of 
interest after re-scaling by $z\mapsto d(0;h)^{1/2}z$, as in \eqref{eq_b2}.
\par
Figure \ref{fig_3} compares the scaling limit pair correlation function 
$\kappa(\frac{\pi}{2} |z-w|^2)$ 
(as a function of the distance) to the histogram data of the numerically obtained 
re-scaled $2$-point correlation function, which corresponds to $\widetilde{\kappa}_h(z,w)$ 
as in \eqref{eq_b2}, obtained from the numerically simulated re-scaled eigenvalues  
depicted on the right hand side of Figure \ref{fig_2} and averaged over 200 
realisations of Gaussian random matrices. 
\par
\begin{figure}[ht]
         \centering
         \begin{minipage}[b]{0.52\linewidth}
         \centering
         \includegraphics[width=\textwidth]{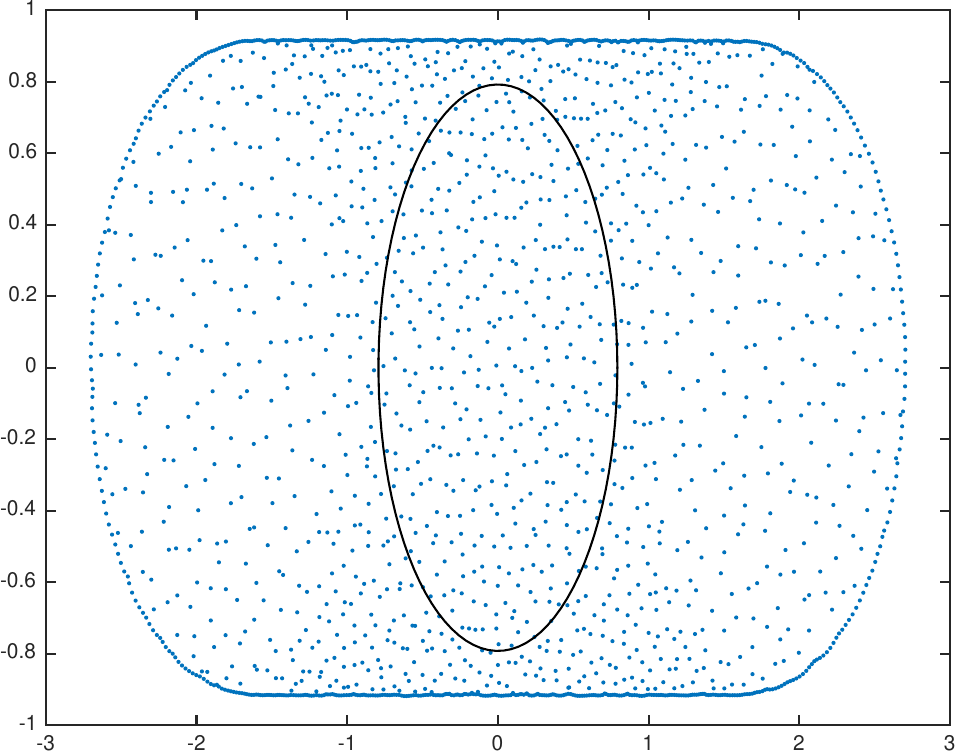}
        \end{minipage}
        \hspace{0.025\linewidth}
        \begin{minipage}[b]{0.415\linewidth}
         \centering 
         \includegraphics[width=\textwidth]{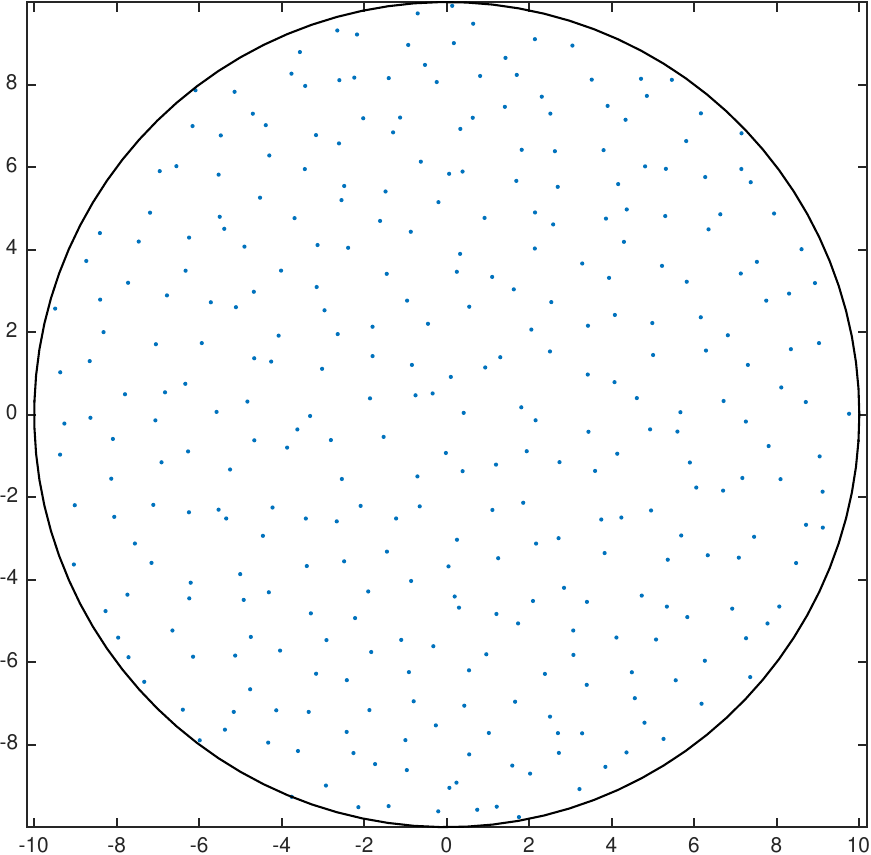}
        \end{minipage}
        \caption{On the left hand side we present the spectrum of the discretisation 
        of $hD + \exp(-ix)$, $h=2\cdot 10^{-3}$, (approximated by a $2001\times 
        2001$-matrix) perturbed with a random complex Gaussian matrix with 
        coupling constant $\delta=2\cdot 10^{-12}$. The black disc indicates the 
        region where we determine the $2$-point correlation function  presented 
        on the right hand side of Figure\ref{fig_3}. The right hand side shows the same 
	disc after re-scaling by $d(0;h)\approx (\pi h)^{-1}$, the average density of eigenvalues 
	at $0$, cf. \eqref{eq_i30}.}
        \label{fig_2}
\end{figure}
\begin{figure}[ht]
         \centering
         \includegraphics[width=0.6\textwidth]{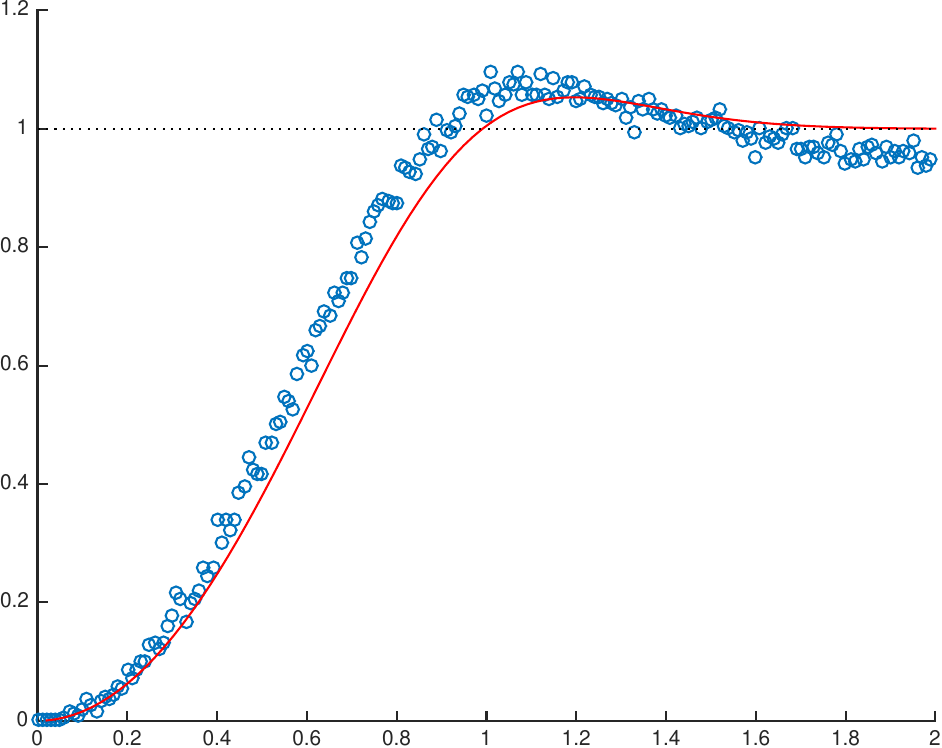}
        \caption{The red line shows the scaling limit pair correlation function 
        		$\kappa(\frac{\pi}{2}|z-w|^2)$, as a function of the distance, and the 
		blue circles sow the histogram data corresponding to the the numerically 
		    determined $2$-point correlation function given by the re-scaled 
		    eigenvalues of the random matrix presented in Figure \ref{fig_2}.}
        \label{fig_3}
\end{figure}
We see that up to a small error the numerically determined re-scaled $2$-point correlation 
function is given by its scaling limit, showing decorrelation for large distances and quadratic 
decay, as the distance between two points goes to zero, confirming the conclusions of 
Theorem \ref{prop:H11} and Corollary \ref{cor1}. 
\\
\par
Finally, let us remark, that when running numerical experiments with a perturbation given by a complex random matrix whose entries follow a uniform or a Poisson distribution instead of a complex Gaussian one, we are able to produce the same results as presented in Figure \ref{fig_3}, suggesting that the 
results of Theorem \ref{prop:H11} and Corollary \ref{cor1} are valid for random perturbations 
of the form \ref{defn:PertIntOp} given by a more general class of random variables.  
\paragraph{\textbf{Organisation of this paper}}
In Section \ref{sec:GrushinProblem} we recall some results from \cite{Vo14} needed for this paper 
and we provide a formula (cf. Proposition \ref{prop:2ptCorrelation}) representing the two-point density of eigenvalues in terms of the permanent and determinant of certain correlation matrices. This formula will be proved in Section 
\ref{sec:CorrelationFormula}. Section \ref{sec:StPh} provides a detailed description of the 
elements of these matrices using the method of stationary phase. Section \ref{susec:Gramian} 
then exploits the main result of Section \ref{sec:StPh} to obtain precise formulas and estimates for the permanent and determinant of the matrices appearing in Proposition \ref{prop:2ptCorrelation}.
Section \ref{sec:PMR} states the proofs of the main results of this paper. 

\paragraph{\textbf{Notation}}
We will use the standard 
scalar products on $L^2(S^1)$ and $\mathds{C}^N$ defined by 
  \begin{equation*}
   (f|g) := \int_{S^1} f(x) \overline{g}(x)dx,
   \quad f,g\in L^2(S^1),
  \end{equation*}
and 
  \begin{equation*}
   (X|Y) := \sum_{i=1}^N X_i \overline{Y}_i,
   \quad X,Y\in\mathds{C}^N.
  \end{equation*}
Throughout this work we shall denote the Lebesgue measure 
on $\mathds{C^d}$ by $L(dz)$; denote $d(z):=\dist(z,\partial\Sigma)$; 
work with the convention that when we write $\mathcal{O}(1)^{-1}$ 
then we mean implicitly an arbitrarily small positive constant; 
denote by $f(x) \asymp g(x)$ that 
there exists a constant $C>0$ such that $C^{-1}g(x) \leq f(x) \leq Cg(x)$. \\
\paragraph{\textbf{Acknowledgments}}
I would like to thank very warmly my thesis advisor Johannes Sj\"ostrand for 
reading the first draft of this work and for his kind and enthusiastic manner 
in supporting me along the way. I would also like to thank sincerely my thesis 
advisor Fr\'ed\'eric Klopp for his kind and generous support. I am also grateful 
to Steve Zelditch for pointing out important references. I would also like to thank the 
referees and the editor for the remarks that have helped to improve the presentation of 
this paper.
\section{A formula for the two-point intensity measure}
\label{sec:GrushinProblem}
In this section we will give a short review of a well-posed 
Grushin problem for the perturbed operator $P_h^{\delta}$ 
which has already been used in \cite{Vo14,SjAX1002}. 
We will then employ the resulting effective Hamiltonians to 
derive a formula for the two-point intensity measure defined 
in \eqref{eqn:NU}. 
\par
We recall that we always suppose that $\Omega\Subset\mathring{\Sigma}$ 
is such that Hypothesis \ref{Hyp:H6} is satisfied, if nothing else 
is specified. 
\subsection{Grushin Problem}
We begin by giving a short refresher on Grushin problems. They 
have become an important tool in microlocal analysis and are 
employed with great success in a vast number of works. As 
reviewed in \cite{SjZw07}, the central idea 
is to set up an auxiliary problem of the form 
\begin{equation*}
 \begin{pmatrix}
  P(z) & R_- \\ 
  R_+ & 0 \\
 \end{pmatrix}
 :
 \mathcal{H}_1\oplus \mathcal{H}_- 
 \longrightarrow \mathcal{H}_2\oplus \mathcal{H}_+,
\end{equation*}
where $P(z)$ is the operator under investigation and $R_{\pm}$ are 
suitably chosen. We say that the Grushin problem is well-posed 
if this matrix of operators is bijective. If 
$\dim\mathcal{H}_-  = \dim\mathcal{H}_+ < \infty$, on typically 
writes 
\begin{equation*}
 \begin{pmatrix}
  P(z) & R_- \\ 
  R_+ & 0 \\
 \end{pmatrix}^{-1}
 =
 \begin{pmatrix}
  E(z) & E_{+}(z) \\ 
  E_{-}(z) & E_{-+}(z) \\
 \end{pmatrix}.
\end{equation*}
The key observation goes back to the Shur complement formula or, 
equivalently, the Lyapunov-Schmidt bifurcation method, 
i.e. the operator $P(z): \mathcal{H}_1 \rightarrow \mathcal{H}_2$ 
is invertible if and only if the finite dimensional matrix 
$E_{-+}(z)$ is invertible and when $E_{-+}(z)$ is invertible, 
we have 
\begin{equation*}
  P^{-1}(z) = E(z) - E_{+}(z) E_{-+}^{-1}(z) E_{-}(z).
\end{equation*}
$E_{-+}(z)$ is sometimes called effective Hamiltonian. 
\\
\par
Next, we give a short reminder of the Grushin Problem used 
to study $P_h^{\delta}$. First, we introduce the following 
auxiliary operators which have already been used by M. Hager 
J. Sj\"ostrand in \cite{HaSj08}. 
\subsection{Two auxiliary operators.}
\label{sec:AuxOpe} 
For $z\in\C$ we consider $Q(z)$ and $\widetilde{Q}(z)$, 
two $z$-dependent elliptic self-adjoint operators from 
$L^2(S^1)$ to $L^2(S^1)$, defined by 
  \begin{align}
   \label{def:AuxOp}
      Q(z):=(P_h-z)^*(P_h-z), \quad
      \tilde{Q}(z):=(P_h-z)(P_h-z)^*
  \end{align}
with natural domains given by 
$\mathcal{D}(Q(z)),\mathcal{D}(\tilde{Q}(z))=H^2_{\mathrm{sc}}(S^1)$. 
Since $S^1$ is compact and these are elliptic, 
non-negative, self-adjoint operators their spectra 
are discrete and contained in the interval 
$[0,\infty[$. Since
  \begin{equation*}
      Q(z)u = 0 \Rightarrow (P_h-z)u=0
  \end{equation*}
it follows that $\mathcal{N}(Q(z)) = \mathcal{N}(P_h-z)$ 
and $\mathcal{N}(\tilde{Q}(z)) = \mathcal{N}((P_h-z)^*)$. 
Furthermore, if $\lambda\neq 0$ is an eigenvalue of $Q(z)$ 
with corresponding eigenvector $e_{\lambda}$ we see that 
$f_{\lambda}:=(P_h-z)e_{\lambda}$ is an eigenvector of 
$\tilde{Q}(z)$ with the eigenvalue $\lambda$. Similarly, 
every non-vanishing eigenvalue of $\tilde{Q}(z)$ is an 
eigenvalue of $Q(z)$ and moreover, since $P_h-z$, $(P_h-z)^*$ 
are Fredholm operators of index $0$ we see that 
$\text{dim}\mathcal{N}(P_h-z)=\text{dim}\mathcal{N}((P_h-z)^*)$. 
Hence the spectra of $Q(z)$ and $\tilde{Q}(z)$ are equal
  \begin{equation}
  \label{eqn:auxillaryOpEigVals}
      \sigma(Q(z))=\sigma(\tilde{Q}(z))
	  =\{t_0^2,t_1^2,\dots\},~0\leq t_j\nearrow\infty.
  \end{equation}
Now consider the orthonormal basis of $L^2(S^1)$ 
  \begin{equation}
  \label{def:e0,e1,...}
    \{e_0,e_1,\dots\}
  \end{equation}
consisting of the eigenfunctions of $Q(z)$.  By the 
previous observations we have 
 \begin{align*}
  (P_h-z)(P_h-z)^*(P_h-z)e_j &= t_j^2(P_h-z)e_j.
 \end{align*}
Thus defining $f_0$ to be the normalized eigenvector of 
$\widetilde{Q}$ corresponding to the eigenvalue $t_0^2$ 
and the vectors $f_j\in L^2(S^1)$, for $j\in\mathds{N}^*$, 
as the normalization of $(P_h-z)e_j$ such that 
  \begin{equation}
  \label{eq_i18}
   (P_h-z)e_j=\alpha_jf_j, \quad (P_h-z)^*f_j
   =\beta_je_j \quad\text{with} ~\alpha_j\beta_j = t_j^2,
  \end{equation}
yields an orthonormal basis of $L^2(S^1)$
  \begin{equation}
    \label{def:f0,f1,...}
      \{f_0,f_1,\dots\}
  \end{equation}
consisting of the eigenfunctions of $\tilde{Q}(z)$. 
Since $\alpha_j = ((P_h-z)e_j|f_j) = (e_j|(P_h-z)^*f_j) = \overline{\beta}_j$ 
we can conclude that $\alpha_j\overline{\alpha}_j=t_j^2$. 
\subsection{A Grushin Problem for the perturbed operator $P_h^{\delta}$}
Following Sj\"ostrand in \cite{SjAX1002}, we us the eigenfunctions of the operators 
$Q$ and $\widetilde{Q}$ (cf \eqref{def:AuxOp}) to create a well-posed Grushin Problem. 
The sequel is taken from \cite{Vo14}, but it originates partly in the works of 
Hager \cite{Ha06}, Bordeaux-Montrieux \cite{BM} and Sj\"ostrand \cite{SjAX1002}.
\begin{prop}\label{prop:GrushUnpertOp}
Let $z\in\Omega \Subset\Sigma$ with $\dist(\Omega,\partial\Sigma)>1/C$ 
and let $\alpha_0,e_0$ and $f_0$ be as in (\ref{eq_i18}). Define
  \begin{align*}
    R_+:~ &H^1(S^1)\longrightarrow\mathds{C}:~ u  \longmapsto (u|e_0),  \notag \\
    R_-:~&\mathds{C}\longrightarrow L^2(S^1):~ u_-  \longmapsto u_-f_0.  
  \end{align*}
Then 
  \begin{equation*}
   \mathcal{P}(z):=\begin{pmatrix}
                    P_h-z & R_- \\ R_+ & 0 \\
                   \end{pmatrix}
   :~ H^1(S^1)\times \mathds{C}\longrightarrow L^2(S^1)\times \mathds{C}
  \end{equation*}
is bijective with the bounded inverse 
  \begin{equation*}
   \mathcal{E}(z) = \begin{pmatrix}
                     E(z) & E_+(z) \\ E_-(z) & E_{-+}(z) \\
                    \end{pmatrix}
  \end{equation*}
where $E_-(z)v = (v|f_0)$, $E_+(z)v_+ = v_+e_0$, 
$E(z)=(P_h-z)^{-1}|_{(f_0)^{\perp}\rightarrow (e_0)^{\perp}}$ 
and $E_{-+}(z)v_+ = -\alpha_0 v_+$. Furthermore, we have 
the estimates for $z\in\Omega$
    \begin{align}
      & \lVert E_-(z)\rVert_{L^2\rightarrow \mathds{C}}, ~
		\lVert E_+(z)\rVert_{\mathds{C}\rightarrow H^1 } 
	  = \mO(1), 
	  \notag\\
      &\lVert E(z)\rVert_{L^2\rightarrow H^1} = \mathcal{O}(h^{-1/2}), 
	  \notag \\
      &|E_{-+}(z)| = \mathcal{O}\!\left(\sqrt{h}\e^{-\frac{S}{h}}\right)
                   =\mathcal{O}\!\left(\e^{-\frac{1}{Ch}}\right);
    \end{align}
\end{prop}
\begin{defn}
\label{def:X} 
For $x\in\mathds{R}$ we denote the integer part of $x$ by  
$\lfloor x\rfloor$. Let 
$C_1>0$ be big enough as above and define 
$N:=(2\lfloor\frac{C_1}{h}\rfloor +1)^2$. 
Let $e_0$ and $f_0$ be as in (\ref{eq_i18}), let
$z\in\Omega\Subset\Sigma$ and let 
$\widehat{e_0}(z;\cdot)$ and $\widehat{f_0}(z;\cdot)$ denote 
the Fourier coefficients of $e_0$ and $f_0$. 
We define the vector 
$X(z)=(X_{j,k}(z))_{|j|,|k|\leq \lfloor\frac{C_1}{h}\rfloor }\in\mathds{C}^{N}$ 
to be given by 
  \begin{equation}\label{eq_a1}
	X_{j,k}(z) =  \widehat{e_0}(z;k) \overline{\widehat{f_0}(z;j)}, 
	\quad\text{for }|j|,|k| \leq \left\lfloor\frac{C_1}{h}\right\rfloor.
  \end{equation}
\end{defn}
\begin{prop}
Let $z\in\Omega \Subset\Sigma$. Let $N$ be as in Definition \ref{def:X} 
and let $B(0,R)\subset\mathds{C}^{N}$ be the ball of radius $R:=C/h$, 
$C>0$ large, centered at $0$. Let $P_{h}^{\delta}$ be as in \eqref{eqn:DefPertP}, 
\eqref{eqn:defnModelOperator}. Let $R_-,R_+$ be as in Proposition 
\ref{prop:GrushUnpertOp}. Then
  \begin{equation*}
   \mathcal{P}_{\delta}(z):=\begin{pmatrix}
                    P_h^{\delta}-z & R_- \\ R_+ & 0 \\
                   \end{pmatrix}
   :~ H^1(S^1)\times \mathds{C}\longrightarrow L^2(S^1)\times \mathds{C}
  \end{equation*}
is bijective with the bounded inverse 
  \begin{equation*}
   \mathcal{E}_{\delta}(z) = \begin{pmatrix}
                     E^{\delta}(z) & E^{\delta}_+(z) \\ 
                     E^{\delta}_-(z) & E^{\delta}_{-+}(z) \\
                    \end{pmatrix}
  \end{equation*}
where 
  \begin{align*}
   &E^{\delta}(z)= E(z) + \mathcal{O}\!\left(\delta h^{-2}\right)
	  =\mathcal{O}(h^{-1/2})  
	  \notag \\
   &E^{\delta}_-(z)=   E_-(z) + \mathcal{O}\!\left(\delta h^{-3/2}\right)
	  =\mathcal{O}(1)  
	  \notag \\
   &E^{\delta}_+(z)= E_+(z) + \mathcal{O}\!\left(\delta h^{-3/2}\right)
          =\mathcal{O}(1)  
  \end{align*}
and 
\begin{equation}
  \label{eqn:PertE-+}
   E_{-+}^{\delta}(z) = 
   E_{-+}(z) 
   - \delta X(z)\cdot\alpha 
   + T(z;\alpha),
  \end{equation}
with $X(z)\cdot\alpha = E_-Q_{\omega}E_+$, $\alpha\in B(0,R)$, and
 \begin{equation}
  \label{eqn:PowerSeriesT_b}
   T(z,\alpha) : = \sum_{n=1}^{\infty}(-\delta)^{n+1} E_-Q_{\omega}(EQ_{\omega})^nE_+ 
    =  \mathcal{O}(\delta^2 h^{-5/2}).
  \end{equation}
Here, the dot-product $X(z)\cdot\alpha$ is the natural bilinear one.   
\end{prop}
\begin{rem}
\label{rem:ET_estim}
 The effective Hamiltonian $E_{-+}^{\delta}(z)$ depends smoothly on 
 $z\in\Omega$ and holomorphically on $\alpha \in B(0,R)\subset\mathds{C}^{N}$. 
 As in \cite[(8.6) and Proposition 4.6]{Vo14} we have the following 
 estimates: for all $z\in\Omega$, all $\alpha\in B(0,R)$ and all 
 $\beta=(\beta_1,\beta_2)\in \mathds{N}^2$ 
 \begin{align*}
  &\partial_{z}^{\beta_1}\partial_{\overline{z}}^{\beta_2} E_{-+}(z) = 
  \mO\left(
  h^{-|\beta|+1/2} \e^{-\frac{S}{h}}
  \right), ~\text{and}
  \notag \\
  &\partial_{z}^{\beta_1}\partial_{\overline{z}}^{\beta_2} T(z,\alpha) 
  = \mO\left( \delta^2
   h^{-(|\beta|+\frac{5}{2})} 
  \right)  
 \end{align*}
 where $S$ is as in \eqref{def_act}. 
 \par
 Moreover, as remarked in \cite{SjAX1002} the effective Hamiltonian 
 $E_{-+}^{\delta}(z)$ satisfies a $\overline{\partial}$-equa\-tion, 
 i.e. there exists a smooth function 
 $f^{\delta}:\Omega \rightarrow \C$ such that 
 \begin{equation*}
  \partial_{\overline{z}}E_{-+}^{\delta}(z)
  + f^{\delta}(z)E_{-+}^{\delta}(z) = 0. 
 \end{equation*}
 This implies that the zeros of $E_{-+}^{\delta}(z)$ 
 are isolated and countable and we may use the same notion of multiplicity 
 as for holomorphic functions. 
\end{rem}
\subsection{Counting zeros}
\label{sec:CountZero2}
By the above well-posed Grushin Problem for the perturbed operator 
$P_h^{\delta}$ we have that $\sigma(P_h^{\delta}) = (E_{-+}^{\delta})^{-1}(0)$. 
Hence, to study the the two-point intensity measure $\nu$ defined in 
(\ref{eqn:NU}), we investigate the integral
\begin{align*}
 \pi^{-N} \int_{B(0,R)} \Bigg(
     \sum_{\substack{z,w\in (E_{-+}^{\delta})^{-1}(0) \\
		     z\neq w}}
     \varphi(z,w) \Bigg)
     \e^{-\alpha^*\cdot\alpha} L(d\alpha)
     =
     \int_{\C^2}\varphi(z_1,z_2) d\nu(z_1,z_2) 
\end{align*}
with $\varphi\in\mathcal{C}_0(\Omega\times\Omega)$. Using 
Remark \ref{rem:ET_estim}, we see that the integral is finite since 
the number of pairs of zeros of $E_{-+}^{\delta}(\cdot,\alpha)$ in 
$\supp \varphi$ is uniformly bounded for $\alpha\in B(0,R)$. 
\par
Recall the definition of the point process $\Xi$ given in \eqref{defn:PP}. 
Using Lemma 7.1 in \cite{Vo14}, we get the following regularization of the 
$2$-fold counting measure $\Xi\otimes\Xi$ 
  \begin{align*}
  \langle \varphi, 
    \Xi\otimes \Xi \rangle
    = 
    \lim\limits_{\varepsilon \rightarrow 0^+}
    \iint \varphi(z_1,z_2)\prod_{j=1}^2
    \varepsilon^{-2}\chi\left(\frac{E_{-+}^{\delta}(z_l)}{\varepsilon}\right)
    |\partial_{z_l}E_{-+}^{\delta}(z_l)|^2 L(dz_1)L(dz_2)
    ,
  \end{align*}
where $\chi\in\mathcal{C}_0^{\infty}(\C)$ such that $\int \chi(w) L(dw)=1$. 
Assuming that $\varphi\in\mathcal{C}_0(\Omega\times\Omega)$ is such that 
$\{(z,z); ~z\in\Omega\}\cap \supp\varphi = \emptyset$, we see 
by the Lebesgue dominated convergence theorem that the 
two-point intensity measure of the point process $\Xi$ is given by 
 \begin{align}
 \label{eqn:kthMoment}
   \int_{\C^2}\varphi(z_1,z_2) d\nu(z_1,z_2)    
     = 
   \lim\limits_{\varepsilon\rightarrow 0^+}
    \iint \varphi(z_1,z_2)
     K^{\delta}_{\varepsilon}(z_1,z_2;h)
    L(dz_1)L(dz_2)
  \end{align}
with 
\begin{equation*}
 K^{\delta}_{\varepsilon}(z_1,z_2;h)
 :=\int_{B(0,R)} \left[\prod_{l=1}^2\varepsilon^{-2}\chi
    \left(\frac{E_{-+}^{\delta}(z_l)}{\varepsilon}\right)
    |\partial_{z_l}E_{-+}^{\delta}(z_l)|^2\right]
     \e^{-\alpha^*\alpha} L(d\alpha).
\end{equation*}
Using \eqref{eqn:PertE-+}, we see that the main object of interest, 
encoding all the information needed for \eqref{eqn:kthMoment},  
is the random vector 
\begin{align}
 \label{eqn:F}
 F^{\delta}(z,w,\alpha;h) &= 
    \begin{pmatrix}
        E_{-+}^{\delta}(z) \\ 
        E_{-+}^{\delta}(w) \\
        (\partial_z E_{-+}^{\delta})(z) \\
         (\partial_z E_{-+}^{\delta})(w) \\
    \end{pmatrix}
    \\
    &=
    \begin{pmatrix}
            E_{-+}(z) \\ 
            E_{-+}(w) \\ 
            (\partial_z E_{-+})(z) \\
            (\partial_z E_{-+})(w) \\
   \end{pmatrix}
   -
   \delta
    \begin{pmatrix}
        X(z)\cdot\alpha \\ 
        X(w)\cdot\alpha  \\ 
        (\partial_z X)(z)\cdot\alpha  \\
        (\partial_z X)(w)\cdot\alpha  \\
   \end{pmatrix}
   +
        \begin{pmatrix}
            T(z,\alpha) \\ 
            T(w,\alpha) \\ 
            (\partial_z T)(z,\alpha) \\ 
            (\partial_z T)(w,\alpha) \\ 
     \end{pmatrix},
     \notag 
  \end{align}
where $X(z)$, $X(w)$ are given in Definition \ref{def:X}. 
It will be very useful in the sequel to define the following $G$.
\begin{align}
  \label{eqn:G_44}
   G : = \begin{pmatrix}
          A & 
          B \\ 
          B^* & 
          C \\ 
          \end{pmatrix}\in \mathds{C}^{4 \times 4},
  \end{align}
  with 
\begin{align}
  \label{eqn:ABC}
   &A := \begin{pmatrix}
          (X(z)|X(z)) & 
          (X(z)|X(w)) \\ 
          (X(w)|X(z)) & 
          (X(w)|X(w)) \\ 
          \end{pmatrix}, 
           \notag \\
  &B := \begin{pmatrix}
          (X(z)|\partial_z X(z)) & 
          (X(z)|\partial_w X(w)) \\ 
          (X(w)|\partial_z X(z)) & 
          (X(w)|\partial_w X(w)) \\ 
          \end{pmatrix},     
          \notag \\
  &C := \begin{pmatrix}
          (\partial_zX(z)|\partial_z X(z)) & 
          (\partial_zX(z)|\partial_w X(w)) \\ 
          (\partial_wX(w)|\partial_z X(z)) & 
          (\partial_wX(w)|\partial_w X(w)) \\ 
          \end{pmatrix}.
\end{align}
Notice that the matrices $A,B,C$ depend on $h$; see Definition 
\ref{def:X}. Next, we will state a formula for the Lebesgue 
density of the two-point intensity measure $\nu$ in terms of 
the permanent of the Shur complement of $G$, i.e. 
\begin{equation}\label{eq_a2}
\Gamma:=C - B^*A^{-1}B.
\end{equation}
The permanent of a matrix is defined 
as follows (cf. \cite{Matha89}):
\begin{defn}
\label{defn:Permanent}
 Let $(M_{ij})_{ij}=M\in\C^{n\times n}$ be a square matrix 
 and let $S_n$ denote the symmetric group of order $n$. The 
 permanent of $M$ is defined by 
 \begin{equation}\label{def:Per}
  \perm M := \sum_{\sigma\in S_n} \prod_{i=1}^n M_{i \sigma(i)}.
 \end{equation}
\end{defn}
\begin{rem}
 Although the definition of the permanent resembles closely to that 
 of the determinant, the two object are quite different. Many properties 
 known to hold true for determinants, fail to be true for permanents. 
 For our purposes it is enough to note that it is multi-linear and 
 symmetric. For more details concerning permanents and their properties we 
 refer the reader to \cite{Matha89}.
\end{rem}
We will prove the following result:
\begin{prop}
\label{prop:2ptCorrelation}
 Let $\Omega\Subset\Sigma$ be as in Hypothesis \ref{Hyp:H6}. Let $\delta>0$ be 
 as in Hypothesis \ref{Hyp:H7} and let $\Gamma$ be as in \eqref{eq_a2}. 
 Moreover, let $D(\Omega,C_2)$ be as in \eqref{eq_i27}. Then, 
 there exists a smooth function
  \begin{equation*}
   D^{\delta}(z,w;h) 
   = \frac{\perm\Gamma(z,w;h)\
  + \mO\!\left(\e^{-\frac{1}{Ch}}+ 
    \delta h^{-\frac{52}{10}}\right)}{\pi^{2}\left(\sqrt{\det A(z,w;h)} 
    + \mO\left(\delta h^{-\frac{3}{2}}\right)\right)^2}
    + \mO\!\left(\e^{-\frac{D}{h^2}}\right).
\end{equation*}
 and there exists a constant $C_2>0$ such that for all 
 $\varphi\in\mathcal{C}_0(\Omega^2\backslash D_h(\Omega,C_2))$ 
   \begin{equation*}
   \int_{\C^2}\varphi(z,w) d\nu(z,w)
   = 
    \int_{\C^2} \varphi(z,w) D(z,w,h,\delta) L(d(z,w)).
  \end{equation*}
\end{prop}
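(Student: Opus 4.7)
The plan is to adapt the computation of the one-point density from \cite{Vo14} to the two-point setting, starting from the Kac-Rice type formula \eqref{eqn:kthMoment}. The random vector $F^\delta$ in \eqref{eqn:F} splits into a deterministic piece $F_0 := (E_{-+}(z), E_{-+}(w), \partial_z E_{-+}(z), \partial_z E_{-+}(w))^T$ of size $\mathcal{O}(\sqrt{h}\,\e^{-S/h})$ by Proposition \ref{prop:GrushUnpertOp} and Remark \ref{rem:ET_estim}, a linear term $-\delta\,\mathcal{X}\cdot\alpha$ with $\mathcal{X}$ a $4\times N$ matrix built from the $X$'s and their derivatives, and a nonlinear tail $T$ of size $\mathcal{O}(\delta^2 h^{-5/2})$ from \eqref{eqn:PowerSeriesT}. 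Since $\alpha$ is complex Gaussian, the linear part is itself a complex Gaussian in $\mathds{C}^4$ with covariance $\delta^2 G$, where $G$ is the block Gramian in \eqref{eqn:G_44}. As a first step I would drop the restriction to $B(0,R)$ at the cost of an $\mathcal{O}(\e^{-D/h^2})$ tail error coming from $R=C/h$, as in \cite{Vo14}.

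The core of the argument is a conditional Gaussian computation. Assuming $A$ is invertible, the complex Gaussian vector $(Z_3,Z_4)=(\partial_z E_{-+}^\delta(z), \partial_z E_{-+}^\delta(w))$ conditioned on $(Z_1,Z_2)=(E_{-+}^\delta(z), E_{-+}^\delta(w))=0$ has mean of perturbative size and covariance $\delta^2\Gamma$ with $\Gamma=C-B^*A^{-1}B$ the Schur complement. Passing to the limit $\varepsilon\to 0^+$ in \eqref{eqn:kthMoment} concentrates the two $\chi$-factors onto the level set $\{Z_1=Z_2=0\}$, producing the Gaussian density of $(Z_1,Z_2)$ at $0$, equal to $(\pi^2\det(\delta^2 A))^{-1}=(\pi\delta^2\sqrt{\det A})^{-2}$ up to the exponential shift from $F_0$, times the conditional expectation $\mathds{E}[|Z_3|^2|Z_4|^2]$. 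Wick's theorem for complex Gaussians then gives $\mathds{E}[|Z_3|^2|Z_4|^2]=\Gamma_{11}\Gamma_{22}+|\Gamma_{12}|^2=\perm(\delta^2\Gamma)=\delta^4\perm\Gamma$, and the four powers of $\delta$ cancel, leaving the leading term $\perm\Gamma / (\pi^2\det A)$.

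I would then track the three sources of error. The deterministic shift $F_0$ contributes a factor $\exp(-F_0^{*}(\delta^2 A)^{-1}F_0)$ in the Gaussian density at the origin; by \eqref{Hyp:Delta} we have $|F_0|^2/\delta^2\gtrsim 1/(Ch)$, yielding the $\mathcal{O}(\e^{-1/(Ch)})$ term in the numerator. The nonlinear term $T$ with its derivatives, controlled by Remark \ref{rem:ET_estim}, enters additively in $F^\delta$ and perturbs $\perm\Gamma$ and $\sqrt{\det A}$; scaling by the Gaussian width $\delta$ and tracking the worst powers of $h^{-1}$ produced by $\|A^{-1}\|$ yields the $\mathcal{O}(\delta h^{-51/10})$ correction in the numerator and the $\mathcal{O}(\delta h^{-3/2})$ correction in $\sqrt{\det A}$, in parallel with the bookkeeping of \cite[\S8]{Vo14}.

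The main obstacle will be controlling $A^{-1}$ near the diagonal. As $w\to z$ the two rows of $\mathcal{X}$ associated with $X(z)$ and $X(w)$ coalesce, and a Taylor expansion of $X(w)$ at $z$ together with the $h$-quasimode estimates for $e_0,f_0$ (giving controlled $h$-growth of $\|\partial_z X\|$, cf.\ \cite{Vo14}) shows that $\det A$ vanishes like $|z-w|^2$ times explicit $h$-powers coming from $\sigma_h$. This forces the exclusion tube $D_h(\Omega,C_2)$ with exponent $3/5$: it is exactly the threshold at which the relative error $\delta h^{-3/2}/\sqrt{\det A}$ and the $T$-contribution $\delta h^{-51/10}/\det A$ remain subordinate to the leading density $\perm\Gamma/(\pi^2\det A)$, so that the Taylor expansion in the perturbative shifts stays valid. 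Combining the leading conditional Gaussian computation with the exponential tail from $F_0$, the ball restriction, and the $T$-corrections then yields the stated formula for $D^{\delta}(z,w;h)$.
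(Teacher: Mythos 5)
Your proposal follows essentially the same route as the paper: pass from the Kac--Rice representation \eqref{eqn:kthMoment} to a conditional Gaussian calculation, identify $\Gamma = C - B^*A^{-1}B$ as the conditional covariance via the Schur complement, obtain $\perm\Gamma$ from the fourth moment of a complex Gaussian, and track the corrections coming from $F_0 = (E_{-+},\partial_z E_{-+})$, from the nonlinear tail $T$, and from truncating the ball $B(0,R)$. The integration-by-parts identity $\pi^{-2}\int |l_1l_2|^2 \e^{-|\alpha'|^2}L(d\alpha') = \perm\Gamma$ that the paper uses is precisely your Wick's theorem step, so the leading term is obtained identically.

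Two points deserve a warning. First, the inequality you state for the deterministic shift, ``$|F_0|^2/\delta^2 \gtrsim 1/(Ch)$,'' is backwards: from Remark~\ref{rem:ET_estim} and hypothesis \eqref{Hyp:Delta} one has $|F_0|/\delta = \mO(\e^{-1/(Ch)})$, so $|F_0|^2/\delta^2$ is \emph{exponentially small}, and the factor $\exp(-F_0^*(\delta^2A)^{-1}F_0)$ is $1 + \mO(\e^{-1/(Ch)})$ rather than exponentially small. As you wrote it, the whole density would be $\mO(\e^{-1/(Ch)})$, which is not the claim. Second, because of the nonlinear tail $T(z,\alpha)$, the vector $(E_{-+}^\delta(z),E_{-+}^\delta(w),\partial_z E_{-+}^\delta(z),\partial_z E_{-+}^\delta(w))$ is \emph{not} Gaussian, so the conditional-Gaussian/Wick argument cannot be applied directly; one needs a concrete device to reduce to the Gaussian picture. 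The paper's device is a global holomorphic implicit function theorem (Lemma~\ref{lem:ImplFunThm}), which straightens the zero set $\{E_{-+}^\delta(z)=E_{-+}^\delta(w)=0\}$ in the $\alpha$-variables; this is the single nontrivial lemma that your sketch leaves unnamed, and the constraints $\delta \ll h^{7/2}$ from \eqref{Hyp:Delta} and $\det A \gtrsim h^{1/5}$ from the exclusion tube are exactly what make its hypotheses hold. Both issues are repairable and your overall plan is the paper's, but they should be filled in.
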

\begin{rem}
 The proof of Proposition \ref{prop:2ptCorrelation} will take up most 
 of the rest of this paper. Therefore we give a short overview on how we will 
 proceed:
 \par
 In Section \ref{sec:StPh}, we give a formula for the scalar 
 product $(X(z)|X(w))$ by constructing holomorphic quasimodes for the operators 
 $(P_h-z)$ and $(P_h-z)^*$ to approximate the eigenfunction $e_0$ and 
 $f_0$, and by using the method of stationary phase.
 \par
 In Section \ref{susec:Gramian}, we will use this formula to 
 study the invertibility of the matrices $G,A$ and $\Gamma$. 
 Furthermore, we will study the permanent of $\Gamma$. 
 \par
 In Section \ref{sec:CorrelationFormula}, we give a proof of 
 Proposition \ref{prop:2ptCorrelation}.
\end{rem}
\section{Stationary Phase}
\label{sec:StPh}
In this section we are interested in the scalar product 
$(X(z)|X(w))$. Recall from Definition \ref{def:X} that the vector 
$X(z)$, $z\in\Omega$, is given by 
$X_{j,k} = \widehat{e_0}(z;k) \overline{\widehat{f_0}(z;j)}$, 
where $e_0$ and $f_0$ are the eigenfunctions of the operators 
$Q(z)$ and $\widetilde{Q}(z)$, respectively, associated to 
their first eigenvalue $t_0^2$. 
\par
The Fourier coefficients 
$\widehat{e_0}(z;k), \widehat{f_0}(z;j)$ 
and their $z$- and $\overline{z}$-derivatives 
are of order $\mO(|k|^{-\infty})$,  $\mO(|j|^{-\infty})$, 
for $|j|,|k|\geq C/h$ with $C>0$ large enough 
(cf \cite[Propositions 5.3 and 5.4]{Vo14}). The Parseval identity 
implies that for $z,w\in\Omega$
  \begin{equation}
  \label{eqn:SP_XX}
   (X(z)|X(w)) = (e_0(z)|e_0(w))(f_0(w)|f_0(z)) 
		 + \mO_{\mathcal{C}^{\infty}}(h^{\infty}).
  \end{equation}
The aim of this section is to prove the following result: 
\begin{prop}
 \label{prop:SP_XX}
 Let $\Omega\Subset\Sigma$ be as in Hypothesis \ref{Hyp:H6} 
 and let $x_{\pm}(z)$ be as in \eqref{eqn:xpm}. Furthermore, for 
 $z\in\Omega$ let $\sigma(z)$ denote the Lebesgue density of the 
 direct image of the symplectic volume form on $T^*S^1$ under the 
 principal symbol $p$, i.e. $\sigma(z)L(dz) = p_*(d\xi \wedge dx)$.
 \par
 Then, there exists a constant $C>0$ such that for all $(z,w)\in 
 \Delta_{\Omega}(C):=\{(z,w)\in\Omega^2;~ |z-w|<1/C\}$
 \begin{align*}
  (X(z)|X(w)) = \e^{-\frac{1}{h}\Phi(z;h)-\frac{1}{h}\Phi(w;h)}
	        \e^{\frac{2}{h}\Psi(z,w;h)}
      +\mO_{\mathcal{C}^{\infty}}\!\left(h^{\infty}\right)
 \end{align*}
 where: 
 \begin{itemize}
  \item $\Phi(\cdot;h):\Omega \rightarrow \R$ is a family 
        of smooth functions depending only on $i\Ima z$,  
        which satisfy
	\begin{align*}
	  \Phi(z;h) = &
	  \Ima \int_{x_+(z)}^{x_0} (z- g(y))dy 
	  -\Ima \int_{x_-( z)}^{y_0} (z- g(y))dy 
	  \notag \\
	  &+\frac{h}{4}\left[\ln \left(\frac{\pi h}{-\Ima g'(x_+(z))}\right)
	  +\ln \left(\frac{\pi h}{\Ima g'(x_-(z))}\right)\right]
	  +\mO(h^2).
        \end{align*}
        and 
        \begin{equation*}
         \partial_{z\overline{z}}^2
	      \Phi\left(z;h\right) 
	  =\frac{1}{4}\sigma\left(z\right)  +\mO(h).
        \end{equation*}
  \item  $\Psi(\cdot,\cdot;h):\Delta_{\Omega}(C) \rightarrow \C$ is 
   a family of smooth functions which are almost $z$-holo\-morphic 
   and almost $w$-anti-holomorphic extensions from the diagonal 
   $\Delta:=\{(z,z); z\in\Omega\}\subset \Delta_{\Omega}(C)$ 
   of $\Phi(z;h)$, i.e. 
    \begin{equation*}
    \Psi(z,z;h) = \Phi\left(\frac{1}{2}(z-\overline{z});h\right),
    ~
     \partial_{\overline{z}}\Psi,\partial_{w}\Psi
      = \mO(|z-w|^{\infty}). 
    \end{equation*}
    Moreover, we have that $\Psi(z,z)=\Phi(z)$ and 
    for $z,w\in \Delta_{\Omega}(C)$ with $|z-w| \ll 1$, 
	\begin{align*}
	  \Psi(z,w;h) 
	  = \sum_{|\alpha+\beta|\leq 2}&
	      \frac{1}{2^{|\alpha+\beta|}\alpha!\beta!}
	    \partial_z^{\alpha}\partial_{\overline{z}}^{\beta}
	      \Phi\left(\frac{z+w}{2};h\right) 
	    (z-w)^{\alpha}(\overline{w-z})^{\beta}
	    \notag \\
	    &+ 
	    \mO(|z-w|^3+h^{\infty}),
       \end{align*}
	and 
	\begin{align*}
	 2\Rea &\Psi(z,w;h)-\Phi(z;h)- \Phi(w;h) 
	 \notag \\
	 &= -\partial_{z\overline{z}}^2
	      \Phi\left(\frac{z+w}{2};h\right) 
         |z-w|^2(1 + \mO(|z-w|+h^{\infty}));
	\end{align*}
   \item the function $\Psi(z,w;h)$ has the following symmetries: 
      \begin{equation*}
       \Psi(z,w;h) = \overline{\Psi(w,z;h)} 
       \quad
       \text{and}
       \quad
       (\partial_z\Psi)(z,w;h) = \overline{(\partial_{\overline{w}}\Psi)(w,z;h)} .
      \end{equation*}
 \end{itemize}
  \end{prop}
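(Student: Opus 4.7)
The plan is to reduce the computation, via (\ref{eqn:SP_XX}), to the evaluation of the two scalar products $(e_0(z)|e_0(w))$ and $(f_0(w)|f_0(z))$, and then to apply the method of Laplace (complex stationary phase) to each. I would first recall the WKB approximations for the ground eigenfunctions of $Q(z)$ and $\widetilde Q(z)$ from the prior works of M.~Hager, J.~Sj\"ostrand and \cite{Vo14}: modulo $\mO_{\mathcal{C}^\infty}(h^\infty)$,
\begin{equation*}
  e_0(x;z) \sim C_e(z;h)\,\chi_+(x)\,\e^{\frac{i}{h}\phi_+(x;z)}, \qquad \phi_+(x;z) := \int_{x_+(z)}^x (z-g(y))\,dy,
\end{equation*}
and analogously $f_0(x;z) \sim C_f(z;h)\,\chi_-(x)\,\e^{\frac{i}{h}\overline{\phi_-(x;z)}}$ with $\phi_-$ anchored at $x_-(z)$. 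The cutoffs $\chi_\pm$ are supported in intervals bounded away from the opposite turning point, and the $L^2$-normalisation forces $|C_e(z;h)|^2 = \sqrt{-\Ima g'(x_+(z))/(\pi h)}(1+\mO(h))$ with an analogous formula for $|C_f|$.

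Second, inserting these Ans\"atze into $(e_0(z)|e_0(w))$ produces an oscillatory integral whose complex phase $\tfrac{i}{h}[\phi_+(x;z) - \overline{\phi_+(x;w)}]$ has imaginary part $\Ima\phi_+(x;z)+\Ima\phi_+(x;w) \geq 0$, with a unique non-degenerate minimum at the point $x^\ast$ solving $\Ima g(x^\ast) = \tfrac12(\Ima z + \Ima w)$, namely $x^\ast = x_+\bigl(\tfrac{z+w}{2}\bigr)$. Laplace's method supplies a prefactor $\sqrt{\pi h/(-\Ima g'(x^\ast))}$, which cancels against the $h^{-1/2}$ in $|C_e(z)||C_e(w)|$ to leave an $\mO(1)$ amplitude. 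An analogous computation for $(f_0(w)|f_0(z))$ uses the turning point $x_-$. Multiplying and invoking (\ref{eqn:SP_XX}) gives $(X(z)|X(w)) = \e^{\mathcal{P}(z,w;h)/h}(1+\mO(h^\infty))$ for a smooth $\mathcal{P}$; the diagonal identity $\|e_0\|=\|f_0\|=1$ forces $\mathcal{P}(z,z) = 0$, which allows the identification $\mathcal{P}(z,w) = -\Phi(z) - \Phi(w) + 2\Psi(z,w)$.

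Third, I would read off the explicit formula for $\Phi$ as the diagonal value of $-\mathcal{P}$: the integral terms come from evaluating the WKB phase at the reference points $x_0, y_0$ used to anchor the Ans\"atze, while the $\tfrac{h}{4}\ln$-prefactors arise from the Laplace amplitudes $\sqrt{\pi h/(\mp\Ima g'(x_\pm))}$. The identity $\partial_{z\overline z}^2\Phi = \tfrac14\sigma(z) + \mO(h)$ follows from the explicit formula $\sigma(z) = |\Ima g'(x_+(z))|^{-1} + |\Ima g'(x_-(z))|^{-1}$ (from the Jacobian of $(x,\xi)\mapsto p(x,\xi)$), the implicit-function identity $x_\pm'(\Ima z) = 1/\Ima g'(x_\pm)$, and the fact that $\Phi$ depends on $z$ only through $\Ima z = (z-\overline z)/(2i)$. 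The function $\Psi(z,w;h)$ is then constructed as the almost $z$-holomorphic and almost $w$-antiholomorphic extension of $\Phi$ from the diagonal (\`a la Melin--Sj\"ostrand), and the claimed Taylor expansion together with the identity
\begin{equation*}
  2\Rea\Psi(z,w) - \Phi(z) - \Phi(w) = -\partial_{z\overline z}^2\Phi\bigl(\tfrac{z+w}{2}\bigr)|z-w|^2\bigl(1+\mO(|z-w|+h^\infty)\bigr)
\end{equation*}
follow by Taylor-expanding this polarisation to second order around the diagonal. The symmetries $\Psi(z,w)=\overline{\Psi(w,z)}$ and $(\partial_z\Psi)(z,w) = \overline{(\partial_{\overline w}\Psi)(w,z)}$ are direct consequences of the Hermitian nature of the scalar products in (\ref{eqn:SP_XX}).

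The main technical obstacle lies in establishing the almost-holomorphy of $\Psi$: although $\phi_+(x;z)$ is holomorphic in $z$ for each fixed real $x$, the critical point $x^\ast$ depends smoothly but not holomorphically on $z$, so the value of the phase at $x^\ast$ is not manifestly almost holomorphic in $z$ nor antiholomorphic in $w$. One must therefore either deform the $x$-integration contour off the real axis so that the critical value acquires the desired holomorphy structure, or invoke the general Melin--Sj\"ostrand extension from the diagonal and verify that both expressions agree up to $\mO(|z-w|^\infty + h^\infty)$. Controlling the error terms from the WKB remainders, the Laplace approximation and the cutoff dependence uniformly for $(z,w) \in \Delta_\Omega(C)$, while preserving the $\mO(h^\infty)$ accuracy in the passage from the true eigenfunctions to the WKB Ans\"atze, is the most delicate part of the argument.
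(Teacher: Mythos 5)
Your overall plan mirrors the paper's: reduce via~\eqref{eqn:SP_XX} to $(e_0(z)|e_0(w))$ and $(f_0(w)|f_0(z))$, approximate each eigenfunction by a WKB Ansatz, apply complex stationary phase, identify $\Phi$ from the diagonal normalisation, obtain $\Psi$ as an almost-holomorphic extension à la Melin--Sj\"ostrand, and read off the Hessian identity from $x_\pm'(\Ima z)=\pm 1/\Ima g'(x_\pm)$. However, the crucial technical step that makes all of this go through is left unproven, and there is a small error that obscures where the real difficulty lies.

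The small error: you claim that $\phi_+(x;z)=\int_{x_+(z)}^{x}(z-g(y))\,dy$ is holomorphic in $z$ for fixed real $x$. It is not, because the anchor $x_+(z)$ depends on $\Ima z$ only, hence not holomorphically. The paper avoids this entirely by anchoring the phase at a \emph{fixed} reference point $x_0\in K_+$ (see~\eqref{eqn:defn_psi_+} and Remark~\ref{rem:holo}); with that choice the quasimode $\widetilde e_0(\cdot,z)$ \emph{is} genuinely $z$-holomorphic, so $(\widetilde e_0(z)|\widetilde e_0(w))$ is genuinely $z$-holomorphic and $w$-anti-holomorphic, and the entire non-holomorphy of the critical value comes from one source only: the $(z,w)$-dependence of the complex critical point. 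This choice is what makes the bookkeeping tractable, and your anchoring at $x_\pm(z)$ discards it.

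The genuine gap: you correctly flag that ``establishing the almost-holomorphy of $\Psi$'' is the main obstacle, and you offer two possible remedies (contour deformation, or a posteriori comparison with the Melin--Sj\"ostrand extension), but you prove neither. This is exactly the content of the paper's Lemma~\ref{lem:CP_ee}, which is the heart of the argument. One must show that the zero $x_+^c(z,w)$ of $\partial_x\widetilde\Psi_+$ (the almost-analytic extension of the phase) exists, is unique and non-degenerate on $\Delta_\Omega(C)$, satisfies $\partial_w x_+^c,\ \partial_{\overline z}x_+^c=\mO(|z-w|^\infty)$, and admits the Taylor expansion
\begin{equation*}
  x_+^c(z,w) = x_+\!\left(\tfrac{z+w}{2}\right) - \frac{\Rea(z-w)}{\{p,\overline p\}(\rho_+(\tfrac{z+w}{2}))} + \mO(|z-w|^2).
\end{equation*}
The proof hinges on differentiating the critical-point equation $z-\overline w-\widetilde g(x)+\widetilde g^*(x)=0$ with respect to $z,\overline z,w,\overline w$, using $\Ima x_+^c(z,z)=0$ and the almost-analyticity of $\widetilde g$ to turn the resulting remainders into $\mO(|\Ima x_+^c|^\infty)=\mO(|z-w|^\infty)$. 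Without that lemma you cannot justify that the critical value $\widetilde\Psi_+(x_+^c(z,w),z,w)$, and hence $\Psi$, is almost $z$-holomorphic and almost $w$-anti-holomorphic near the diagonal, which is precisely the property used to prove the Taylor expansion, the formula for $K(z,w)$ in~\eqref{eqn:K}, and later the whole computation of the Gramian matrix. Note also that you apply Laplace's method at the \emph{real} minimum $x_+(\tfrac{z+w}{2})$ of $\Ima\Psi_+$; when $\Rea(z-w)\neq 0$ that is not a critical point of the complex phase, and the stationary-phase expansion must be performed at the complex point $x_+^c(z,w)$ after shifting by the minimum value $\lambda(z,w)$ of $\Ima\Psi_+$, as in~\eqref{eqn:I_2}--\eqref{eqn:I_1}.
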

 Let us give some remarks on the above results: Note that the formula 
 for $\Psi$ stated above is simply a special case of the more general 
 Taylor expansion 
  \begin{equation*}
	  \Psi(z_0+\zeta,z_0+\omega;h) 
	  = \sum_{|\alpha+\beta|\leq 2}
	      \frac{1}{\alpha!\beta!}
	    \partial_z^{\alpha}\partial_{\overline{z}}^{\beta}
	      \Phi\left(z_0;h\right) 
	    \zeta^{\alpha}\overline{\omega}^{\beta}
	    + 
	    \mO((\zeta,\omega)^3+h^{\infty}),
 \end{equation*}
 with $z_0\in\Omega$ and $|\zeta|,|\omega|\ll 1$. 
 \\
 \par
To prove Proposition \ref{prop:SP_XX}, we will use \eqref{eqn:SP_XX} and 
study the scalar products $(e_0(z)|e_0(w))$ and $(f_0(w)|f_0(z))$ separately,  
see Sections \ref{susec:SP_ee} and \ref{susec:SP_ff} below. The proof of 
Proposition \ref{prop:SP_XX} will then be stated at the end of this section. 
 \begin{rem}
  Note that the behaviour of $(X(z)|X(w))$ is close to the behaviour of Bergman 
  kernels (see for example \cite[Sec. 13.3]{SemCla}). However, we will not use this 
  notion in the sequel. 
 \end{rem}
 Next, we define for 
 $(z,w)\in \Delta_{\Omega}(C)$, as in Proposition \ref{prop:SP_XX}, 
  \begin{align}
   \label{eqn:K}
   -K(z,w) :&= 2\Rea \Psi(z,w;h)-\Phi(z;h)- \Phi(w;h) 
	  \\
	 &= -\left(\sigma\left(\frac{z+w}{2}\right)  +\mO(h)\right)
         \frac{|z-w|^2}{4}(1 + \mO(|z-w|+h^{\infty})). \notag
  \end{align}
  From the above Proposition we can immediately deduce 
  some growth properties of certain quantities that 
  will be become important in the sequel.
  \begin{cor}
  \label{cor:DetPer_A}
   Under the assumptions of Proposition \ref{prop:SP_XX}, we have that 
   \begin{equation}
   	\begin{split}
	& \bullet \quad |(X(z)|X(w))|=  \e^{-\frac{K(z,w)}{h}}
   			 +\mO_{\mathcal{C}^{\infty}}\!\left(h^{\infty}\right); \\
	& \bullet  \quad \lVert X(z) \rVert^2 \lVert X(w) \rVert^2 \pm
          		|(X(z)|X(w))|^2	= \left( 1\pm 
             			  \e^{-\frac{2K(z,w)}{h}}\right)
             		+\mO_{\mathcal{C}^{\infty}}\!\left(h^{\infty}\right); \\
	& \bullet  \quad \lVert X(z) \rVert^2 \lVert X(w) \rVert^2 |(X(z)|X(w))|^2
			= \e^{-\frac{2K(z,w)}{h}}
             +\mO_{\mathcal{C}^{\infty}}\!\left(h^{\infty}\right).
	\end{split}
   \end{equation}
  \end{cor}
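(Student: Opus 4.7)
The plan is to derive each bullet point as a direct consequence of the asymptotic formula for $(X(z)|X(w))$ established in Proposition \ref{prop:SP_XX} combined with the definition of $K(z,w)$ given in \eqref{eqn:K}. No new analysis is required; all three identities are algebraic manipulations, the only care being the propagation of the $\mathcal{O}_{\mathcal{C}^\infty}(h^\infty)$ remainders.

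For the first bullet, I would take absolute values in the formula from Proposition \ref{prop:SP_XX}. Because $\Phi(\cdot;h):\Omega\to\R$ is real-valued, the prefactor $e^{-(\Phi(z;h)+\Phi(w;h))/h}$ is real and positive, and one is left with $|e^{2\Psi(z,w)/h}| = e^{2\Rea\Psi(z,w;h)/h}$. Collecting the exponents yields precisely $\frac{1}{h}\bigl(2\Rea\Psi - \Phi(z) - \Phi(w)\bigr) = -K(z,w)/h$ by the definition \eqref{eqn:K}. The remainder is $\mathcal{O}_{\mathcal{C}^\infty}(h^\infty)$ because it was so in Proposition \ref{prop:SP_XX}.

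For the second and third bullets, the first step is to specialize Proposition \ref{prop:SP_XX} to $w=z$: using $\Psi(z,z;h)=\Phi(z;h)$, the two exponentials cancel exactly, giving
\begin{equation*}
\lVert X(z)\rVert^2 = (X(z)|X(z)) = 1 + \mathcal{O}_{\mathcal{C}^\infty}(h^\infty),
\end{equation*}
and hence $\lVert X(z)\rVert^2\lVert X(w)\rVert^2 = 1 + \mathcal{O}(h^\infty)$. Squaring the first bullet and using that $K(z,w)\geq 0$ on $\Delta_\Omega(C)$ (visible from \eqref{eqn:K} for $|z-w|$ small enough, since $\sigma>0$ on $\Omega$), so that $e^{-K/h}\leq 1$, I obtain $|(X(z)|X(w))|^2 = e^{-2K(z,w)/h} + \mathcal{O}(h^\infty)$. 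Adding or subtracting the two expressions gives the second bullet and multiplying gives the third, both with remainder $\mathcal{O}_{\mathcal{C}^\infty}(h^\infty)$.

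There is essentially no obstacle: the only bookkeeping point is checking that every product of a bounded principal term with an $\mathcal{O}(h^\infty)$ error remains $\mathcal{O}(h^\infty)$, which is immediate since each factor $\lVert X(\cdot)\rVert^2$, $|(X(z)|X(w))|$, and $e^{-K/h}$ is $\mathcal{O}(1)$ on $\Delta_\Omega(C)$. Derivative bounds (needed for the $\mathcal{C}^\infty$ qualifier on the remainders) follow in the same way, since the claimed identities are finite-order polynomial combinations of quantities whose $\mathcal{C}^\infty$-asymptotic expansions are supplied by Proposition \ref{prop:SP_XX}.
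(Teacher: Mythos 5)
Your proof is correct and follows exactly the route the paper intends: the corollary is stated without proof immediately after Proposition \ref{prop:SP_XX}, and all three bullets are the algebraic consequences you describe, using $\Phi$ real-valued, $\Psi(z,z;h)=\Phi(z;h)$, the definition \eqref{eqn:K} of $K$, and the fact that $e^{-K/h}\leq 1$ so the $\mathcal{O}_{\mathcal{C}^\infty}(h^\infty)$ remainders propagate through products.
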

\subsection{The Scalar Product $(e_0(z)|e_0(w))$}
\label{susec:SP_ee}
 We will prove 
 \begin{prop}
 \label{prop:Sp_ee}
 Let $\Omega\Subset\Sigma$ be as in Hypothesis \ref{Hyp:H6} 
 and let $x_+(z)$ be as in \eqref{eqn:xpm}. 
 Then, there exists a constant $C>0$ such that for all $(z,w)\in 
 \Delta_{\Omega}(C):=\{(z,w)\in\Omega^2;~ |z-w|<1/C\}$ 
  \begin{equation}
  \label{eqn:Sp_ee_1}
   (e_0(z)|e_0(w)) = 
      \e^{-\frac{1}{h}\Phi_1(z;h)}
      \e^{-\frac{1}{h}\Phi_1(w;h)}
      \e^{\frac{2}{h}\Psi_1(z,w;h)}
         + \mO\!\left(h^{\infty}\right),
  \end{equation}
 where:
 \begin{itemize}
  \item $\Phi_1(\cdot;h):\Omega \rightarrow \R$ is a family 
        of smooth functions depending only on $i\Ima z$, which satisfy
	\begin{equation*}
	  \Phi_1(z;h) = 
	  \Ima \int_{x_+(\Ima z)}^{x_0} (z- g(y))dy 
	  +\frac{h}{4}\ln \left(\frac{\pi h}{-\Ima g'(x_+)}\right)
	  +\mO(h^2).
        \end{equation*}
  \item   $\Psi_1(\cdot,\cdot;h):\Delta_{\Omega}(C) \rightarrow \C$ is 
   a family of smooth functions which are almost $z$-holo\-morphic 
   and almost $w$-anti-holomorphic extensions from the diagonal 
   $\Delta:=\{(z,z); z\in\Omega\}\subset \Delta_{\Omega}(C)$ 
   of $\Phi_1(z;h)$, i.e. 
    \begin{equation*}
    \Psi_1(z,z;h) = \Phi_1\left(\frac{1}{2}(z-\overline{z});h\right),
    ~
     \partial_{\overline{z}}\Psi_1,\partial_{w}\Psi_1
      = \mO(|z-w|^{\infty}).
    \end{equation*}
    Moreover, for $z,w\in \Delta_{\Omega}(C)$ with $|z-w|\ll 1$, 
    one has that 
	\begin{align*}
	  \Psi_1(z,w;h) 
	  = \sum_{|\alpha+\beta|\leq 2}&
	      \frac{1}{2^{|\alpha+\beta|}\alpha!\beta!}
	    \partial_z^{\alpha}\partial_{\overline{z}}^{\beta}
	      \Phi_1\left(\frac{z+w}{2};h\right) 
	    (z-w)^{\alpha}(\overline{w-z})^{\beta}
	    \notag \\
	    &+ 
	    \mO(|z-w|^3+h^{\infty}),
       \end{align*}
	and that 
	\begin{align*}
	 2\Rea &\Psi_1(z,w;h)-\Phi_1(z;h)- \Phi_1(w;h) 
	 \notag \\
	 &= -
        \partial_z\partial_{\overline{z}}
        \Phi_1\left(\frac{z+w}{2};h\right)  
         |z-w|^2(1 + \mO(|z-w|+h^{\infty}));
	\end{align*}
 \item the function $\Psi_1(z,w;h)$ has the following symmetries: 
      \begin{equation*}
       \Psi_1(z,w;h) = \overline{\Psi_1(w,z;h)} 
       \quad
       \text{and}
       \quad
       (\partial_z\Psi_1)(z,w;h) = 
       \overline{(\partial_{\overline{w}}\Psi_1)(w,z;h)} .
      \end{equation*}
 \end{itemize}
 \end{prop}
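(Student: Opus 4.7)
The plan is to identify the normalized eigenfunction $e_0(z)$ with an explicit WKB quasimode $u_+(z)$ up to $\mO_{\mathcal{C}^{\infty}}(h^{\infty})$ error, and then to evaluate $(u_+(z)|u_+(w))$ by complex stationary phase. As the first step I would invoke Propositions~5.5 and~5.6 of \cite{Vo14}: these already establish that $e_0(z)$ differs in any seminorm from an explicit normalized quasimode concentrated at $\rho_+(z)=(x_+(z),\xi_+(z))$ by an error which is $\mO(h^{\infty})$, so that
\begin{equation*}
 (e_0(z)|e_0(w)) = (u_+(z)|u_+(w)) + \mO(h^{\infty}).
\end{equation*}
The quasimode has the standard WKB form $u_+(x;z,h) = N(z,h)\,a(x,z;h)\,\chi(x)\,\e^{i\phi(x,z)/h}$ on $S^1\simeq [b-2\pi,b[$, where $\phi$ solves the eikonal equation $\partial_x\phi = z-g(x)$ with $\phi(x_+(z),z)=0$, the amplitude $a$ is a classical symbol built from the transport equation with positive leading term, $\chi$ is a smooth cutoff localising on a neighbourhood of $x_+(z)$, and $N(z,h)$ is the $L^2$-normalisation. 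The crucial feature of the phase is that $\Ima\phi(x,z)$ has a non-degenerate minimum $0$ at $x=x_+(z)$ with Hessian $-\Ima g'(x_+(z))>0$ by $(H.1)$, so $|u_+|^2$ is a Gaussian peak of width $\sqrt h$ and $N(z,h)^2 \asymp (-\Ima g'(x_+(z))/\pi h)^{1/2}$.

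The second step is to write the scalar product as the oscillatory integral
\begin{equation*}
 (u_+(z)|u_+(w)) = N(z,h)\overline{N(w,h)}\int_{S^1} a(x,z;h)\overline{a(x,w;h)}\chi(x)^2 \e^{\Theta(x,z,w)/h}\,dx,
\end{equation*}
with phase $\Theta(x,z,w):= i\phi(x,z)-i\overline{\phi(x,w)}$, whose real part $-\Ima\phi(x,z)-\Ima\phi(x,w)$ has a unique non-degenerate maximum at a critical point $x_c(z,w)$ determined, via an almost-analytic extension of $g$, by the equation $\partial_x\Theta(x_c,z,w)=0$. For $z=w$ one has $x_c=x_+(z)$ and $\partial_x^2\Theta=2\Ima g'(x_+)<0$. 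A Laplace / complex stationary phase argument, with the usual contour deformation into the almost-analytic extension, then yields the principal contribution
\begin{equation*}
 N(z,h)\overline{N(w,h)}\,\sqrt{\tfrac{2\pi h}{-\partial_x^2\Theta(x_c)}}\,a(x_c,z;h)\overline{a(x_c,w;h)}\,\e^{\Theta(x_c,z,w)/h}\bigl(1+\mO(h)\bigr),
\end{equation*}
with the contribution of $\supp\chi\setminus\{x_c\}$ being $\mO(h^{\infty})$ by the standard non-stationary phase lemma.

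The third step is to unpack this expression into the form stated in the proposition. Setting $z=w$ and using $\|e_0\|=1$ fixes the normalisation constant and identifies the diagonal value $\Psi_1(z,z;h)=\Phi_1(z;h)$ with the stated formula, the leading action $\Ima\int_{x_+(z)}^{x_0}(z-g(y))dy$ arising from $-\Ima\phi(x_0,z)$ at the right endpoint of $\supp\chi$ and the $\tfrac{h}{4}\ln(\pi h/(-\Ima g'(x_+)))$ term from $\log N(z,h)$; the dependence of $\Phi_1$ only on $\Ima z$ follows from the observation that $\Rea z$ enters $\phi$ only through a real oscillation that factors out of $|u_+|^2$. For off-diagonal arguments I would define $2\Psi_1(z,w;h)$ to be the logarithm of the principal value above plus $\Phi_1(z;h)+\Phi_1(w;h)$; the holomorphy of $\phi(\cdot,z)$ in $z$ and the anti-holomorphy of $\overline{\phi(\cdot,w)}$ in $w$, together with the almost-analytic extensions used to locate $x_c$ for complex arguments, make $\Psi_1$ almost-holomorphic in $z$ and almost-anti-holomorphic in $w$ with Cauchy--Riemann defect $\mO(|z-w|^{\infty})$. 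The Hermitian symmetry $(e_0(z)|e_0(w))=\overline{(e_0(w)|e_0(z))}$ then gives $\Psi_1(z,w)=\overline{\Psi_1(w,z)}$ and by differentiation $(\partial_z\Psi_1)(z,w)=\overline{(\partial_{\overline w}\Psi_1)(w,z)}$, while the Taylor expansion around $(z+w)/2$ and the identity for $2\Rea\Psi_1-\Phi_1(z)-\Phi_1(w)$ follow from Taylor expanding both $\Phi_1$ and $\Psi_1$ to second order and noting that the odd-order terms cancel after taking the real part.

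The main technical obstacle is the uniform handling of two complex spectral parameters $(z,w)$ simultaneously: one must promote all WKB data to almost-analytic extensions in $z$ and in $w$ so that the critical point $x_c(z,w)$ and the phase $\Theta(x_c,z,w)$ make sense as smooth functions on $\Delta_\Omega(C)$, and then verify the correct almost-holomorphic character of $\Psi_1$ in $z$ and of $\overline{\Psi_1}$ in $w$ with $\mO(|z-w|^\infty)$ remainders uniformly down to $|z-w|\ll 1$. Once this bookkeeping is in place, the stationary phase computation and the expansion of the resulting analytic expression are essentially routine extensions of the one-point analysis of \cite{Ha06,BM,SjAX1002,Vo14}.
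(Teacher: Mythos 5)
Your proposal is correct and takes essentially the same approach as the paper: identify $e_0(z)$ with a WKB quasimode up to $\mO(h^\infty)$, evaluate the overlap $(u_+(z)|u_+(w))$ by complex stationary phase using an almost-analytic extension to locate the complex critical point $x_c(z,w)$, and then extract $\Phi_1,\Psi_1$ and their properties from the critical value, Taylor expansion about $(z+w)/2$, and Hermitian symmetry. Two minor inaccuracies: the paper actually cites \cite[Proposition 3.11]{Vo14} (not 5.5--5.6, which concern Fourier-coefficient decay) for the quasimode identification, and the base point $x_0$ in $\Ima\int_{x_+}^{x_0}(z-g)\,dy$ is the fixed reference point for the eikonal normalisation $\psi_+(x_0,z)=0$, not the endpoint of $\supp\chi$, so the leading action term is $+\Ima\int_{x_+}^{x_0}(z-g)\,dy$ rather than $-\Ima\phi(x_0,z)$ in your normalisation.
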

 To prove Proposition \ref{prop:Sp_ee}, we begin by constructing an 
 oscillating function to approximate $e_0(z)$. Let us recall from Section 
 \ref{sec:introduction} that the 
 points $a,b\in S^1$ denote the minimum and the maximum of 
 $\Ima g(x)$ and that for $z\in\Omega$ the points 
 $x_{\pm}(z)\in S^1$ are the unique solutions to the 
 equation $\Ima g(x) = \Ima z$. Furthermore, we will identify frequently 
 $S^1$ with the interval $[b-2\pi,b[$. Moreover, let us recall that 
 by the natural projection 
 $\Pi:\mathds{R}\rightarrow S^1=\mathds{R}/2\pi\mathds{Z}$ 
 we identify the points $x_{\pm},a,b\in S^1$ with points 
 $x_{\pm},a,b\in\mathds{R}$ such that 
 $ b-2\pi <  x_+  < a < x_- < b$. 
 \\
 \par
 Let $K_+\subset ]b-2\pi,a[$ be an open interval such that 
 $x_+(z)\in K_+$ for all $z\in\Omega$.
  Let $\chi\in\mathcal{C}^{\infty}_0(]b-2\pi,a[)$ and define
  for $x\in\R$
  \begin{equation}
  \label{eqn:HoloQuasMod}
  \widetilde{e}_0(x,z) : = 
      \chi(x)\exp\left({\frac{i}{h}\psi_+(x,z)}\right).
 \end{equation}
  where, for a fixed $x_0\in K_+$,
  \begin{equation}
  \label{eqn:defn_psi_+}
  \psi_+(x,z):= \int_{x_0}^x \left(z - g(y)\right) dy.
  \end{equation}
  \begin{rem}
   \label{rem:holo}
   Note that the function $u=\exp(i \psi_+(x,z)/h)$ is 
   solution to $(P_h -z) u =0$ on $\supp \chi$, since the phase function 
   $\psi_+$ satisfies the eikonal equation 
  \begin{equation*}
    p(x,\partial_x\psi_+) = z.
  \end{equation*}
  Furthermore, let us remark that $\widetilde{e}_0(x,z)$ depends 
  holomorphically on $z$.
  \end{rem}
  Next, we are interested in the $L^2$-norm 
  of $\widetilde{e}_0$.
 \begin{lem}
  \label{lem:norm_qm_e}
  Let $\Omega\Subset\Sigma$ be as in Hypothesis \ref{Hyp:H6}. Then, there 
  exists a family of smooth functions 
  $\Phi_1(\cdot;h):\Omega \rightarrow \R$, such that 
  \begin{equation*}
   \Phi_1(z;h) = \Phi_1(i\Ima z;h) = 
   \Ima \int_{x_+(\Ima z)}^{x_0} (z- g(y))dy 
   +\frac{h}{4}\ln \left(\frac{\pi h}{-\Ima g'(x_+)}\right)
   +\mO(h^2)
  \end{equation*}
  and  
  \begin{align*}
  \lVert \widetilde{e}_0(z)\rVert^2  
   = \exp\left\{\frac{2}{h}\Phi_1(z;h)\right\}.
 \end{align*}
 \end{lem}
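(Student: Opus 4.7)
The plan is to recognize $\lVert\widetilde{e}_0(z)\rVert^2$ as a one-dimensional real Laplace-type integral and apply the method of Laplace. From the definitions \eqref{eqn:HoloQuasMod} and \eqref{eqn:defn_psi_+} we have
$$\lvert\widetilde{e}_0(x,z)\rvert^2 = \chi(x)^2 \exp\!\left(-\tfrac{2}{h}\phi(x,z)\right), \qquad \phi(x,z) := \Ima\psi_+(x,z) = \int_{x_0}^x (\Ima z - \Ima g(y))\,dy,$$
so $\phi(\cdot,z)$ is a real-valued smooth phase depending only on $\Ima z$, and
$$\lVert\widetilde{e}_0(z)\rVert^2 = \int_{\R}\chi(x)^2\, e^{-\frac{2}{h}\phi(x,z)}\,dx.$$

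Next I would locate the critical points of $x\mapsto\phi(x,z)$. Since $\partial_x\phi = \Ima z - \Ima g(x)$, hypothesis $(H.1)$ and the definition of $x_\pm$ in \eqref{eqn:xpm} imply that on $S^1$ the phase has exactly two critical points $x_\pm(z)$, of which only $x_+(z)\in K_+\subset\,]b-2\pi,a[$ lies in $\supp\chi$. At $x_+$ one has $\partial_x^2\phi(x_+,z) = -\Ima g'(x_+) > 0$, so $x_+$ is a nondegenerate minimum of $\phi$. As is standard in quasimode constructions, I will take $\chi$ so that $\chi\equiv 1$ on $K_+$ (the choice of $K_+$ is set up precisely to make this possible uniformly in $z\in\Omega$). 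With this normalization, Laplace's method yields the classical expansion
$$\lVert\widetilde{e}_0(z)\rVert^2 = \sqrt{\frac{\pi h}{-\Ima g'(x_+(z))}}\, e^{-\frac{2}{h}\phi(x_+(z),z)}\bigl(1 + h\,b_1(z) + h^2 b_2(z) + \cdots\bigr),$$
with smooth coefficients $b_j$ depending only on $\Ima z$ (since both the phase $\phi$ and the cutoff normalization depend only on $\Ima z$), and with the expansion valid uniformly in $z\in\overline{\Omega}$.

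Taking logarithms, dividing by $2/h$, and rewriting $-\phi(x_+(z),z) = \Ima\int_{x_+(\Ima z)}^{x_0}(z-g(y))dy$, I obtain
$$\Phi_1(z;h) := \tfrac{h}{2}\ln\lVert\widetilde{e}_0(z)\rVert^2 = \Ima\int_{x_+(\Ima z)}^{x_0}(z-g(y))\,dy + \frac{h}{4}\ln\!\left(\frac{\pi h}{-\Ima g'(x_+(z))}\right) + \tfrac{h}{2}\ln\bigl(1+\mO(h)\bigr),$$
and since $\ln(1+\mO(h)) = \mO(h)$, the last term is $\mO(h^2)$, matching the claim. The function $\Phi_1$ is manifestly smooth, real-valued, and depends only on $\Ima z$ (through $x_+(\Ima z)$, the integrand, and $\Ima g'$). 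The only substantive technical point is uniformity of the Laplace expansion in $z\in\overline{\Omega}$; this is immediate because $\partial_x^2\phi(x_+(z),z) = -\Ima g'(x_+(z))$ is continuous and bounded away from zero on $\overline{\Omega}$ (by $(H.1)$ together with $\dist(\Omega,\partial\Sigma) > 1/C$, which keeps $x_+(z)$ strictly separated from the extrema $a,b$ of $\Ima g$). No deeper obstacle is present.
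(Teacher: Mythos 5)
Your argument is the same Laplace/stationary‑phase computation the paper uses: identify $x_+(\Ima z)$ as the unique nondegenerate minimum of $\Ima\psi_+$ in $\supp\chi$, apply the one‑dimensional Laplace expansion, and take logarithms; your explicit normalization $\chi\equiv 1$ on $K_+$ (left implicit in the paper) and the remark on uniformity in $z\in\overline\Omega$ are correct and welcome clarifications but do not change the route. The proof is correct.
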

 \begin{proof}
  In view of the definition of $\widetilde{e}_0(z)$, see 
  \eqref{eqn:HoloQuasMod} and \eqref{eqn:defn_psi_+}, one gets that 
  \begin{equation*}
  \lVert \widetilde{e}_0(z)\rVert^2 = 
  \int \chi(x)\e^{\frac{i}{h}(\psi_+(x,z) - \overline{\psi}_+(x,z))}
  dx
  = 
   \int \chi(x)\e^{-\frac{2}{h}\Ima \psi_+(x,z)}
    dx.
 \end{equation*}
 The critical point for $\Ima \psi_+(x,z)$ is given by 
 the equation 
 \begin{equation*}
   \Ima \partial_x \psi_+(x,z) = \Ima z - \Ima g(x) = 0, \quad 
   x\in\supp\chi.
 \end{equation*}
 The critical point, given by $x_+(\Ima z)$, is unique and it 
 satisfies $\Ima g'(x_+(\Ima z)) < 0$, see 
 \eqref{eqn:xpm}. This implies in particular 
 that the critical point is non-degenerate. More precisely,
 \begin{equation}
  \label{eqn:psi+Hess}
    \Ima (\partial_{xx}^2 \psi_+)(x_+,z) = -\Ima g'(x_+) > 0.
 \end{equation}
 The critical value of $\Ima \psi_+$ is given by 
 \begin{equation*}
  \Ima \psi_+(x_+(\Ima z),z) = 
  \Ima \int_{x_0}^{x_+(\Ima z)} (z- g(y))dy \leq 0.
 \end{equation*}
 Using the method of stationary phase, one gets 
 \begin{align*}
  \lVert \widetilde{e}_0(z)\rVert^2 &= 
  \sqrt{\frac{\pi h}{\Ima (\partial_{xx}^2
  \psi_+)(x_+,z)}}(1+ \mO(h))
  \exp\left\{-\frac{2\Ima \psi_+(x_+,z)}{h}\right\}
  \notag\\
  & =: \exp\left\{\frac{2}{h}\Phi_1(z;h)\right\},
 \end{align*}
 where $\Phi_1$ is smooth in $z$. Using \eqref{eqn:psi+Hess}, 
 one gets that 
 \begin{equation*}
  \Phi_1(z;h) = \Ima \int_{x_+(\Ima z)}^{x_0} (z- g(y))dy 
  +\frac{h}{4}\ln \left(\frac{\pi h}{-\Ima g'(x_+)}\right)
  +\mO(h^2).  \qedhere
 \end{equation*}
 \end{proof}
 Recall from \eqref{def:e0,e1,...} that the function $e_0$ is an 
 eigenfunction of the operator $Q(z)$ (cf Section \ref{sec:AuxOpe}) 
 corresponding to its first eigenvalue $t_0^2$. We set 
 \begin{equation*}
  e_0(z) = 
  \frac{\Pi_{t_0^2} \left(\e^{-\frac{1}{h}\Phi_1(z;h)}
	\widetilde{e}_0(z)\right)}
  {\left\lVert \Pi_{t_0^2} 
    \left(\e^{-\frac{1}{h}\Phi_1(z;h)}\widetilde{e}_0(z) \right)\right\rVert},
 \end{equation*}
 where $\Pi_{t_0^2}:L^2(S^1) \rightarrow \C e_0$ denotes the 
 spectral projection for $Q(z)$ onto the eigenspace associated 
 with $t_0^2$. 
 \par
 Next, we prove that up to an exponentially small error in
 $1/h$, $e_0$ is given by the normalization of $\widetilde{e}_0$. 
 \begin{lem} 
  Let $\Omega\Subset\Sigma$ be as in Hypothesis \ref{Hyp:H6}. Then, there exists 
  a constant $C>0$ such that for all $z\in\Omega$ and all 
  $\alpha=(\alpha_1,\alpha_2)\in\mathds{N}^2$ 
  \begin{equation*}
  \left\lVert 
  \partial_{z}^{\alpha_1} \partial_{\overline{z}}^{\alpha_2} 
  \left(
  e_0 (z) - \e^{-\frac{1}{h}\Phi_1(z;h)}\widetilde{e}_0(z)
  \right)
  \right\rVert 
   = \mO\!\left(h^{-|\alpha|}\e^{-\frac{1}{Ch}}\right).
  \end{equation*}

 \end{lem}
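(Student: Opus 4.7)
The plan is to show that the normalized quasimode $\widetilde{e}_0^{\mathrm{norm}}(z) := \e^{-\Phi_1(z;h)/h}\widetilde{e}_0(z)$ agrees with $e_0(z)$ up to an exponentially small error in $1/h$, by checking that $\widetilde{e}_0^{\mathrm{norm}}$ is an $\mO\!\left(\e^{-1/(Ch)}\right)$-quasimode of $P_h-z$ and then inverting the well-posed Grushin problem of Proposition \ref{prop:GrushUnpertOp}.

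First I would compute the quasimode residue. Because $\psi_+$ solves the eikonal equation (Remark \ref{rem:holo}), a direct computation gives
\[
(P_h-z)\widetilde{e}_0(x,z) = \frac{h}{i}\chi'(x)\e^{\frac{i}{h}\psi_+(x,z)}.
\]
Since $\chi'$ vanishes in a neighborhood of $x_+(\Ima z)$ (the unique, non-degenerate minimum of $\Ima \psi_+$ on $\supp\chi$, cf.\ \eqref{eqn:psi+Hess}), there is $\delta>0$ with $-\Ima\psi_+(x,z) \le -\Ima\psi_+(x_+,z)-\delta$ on $\supp \chi'$, uniformly for $z\in\Omega$. Combined with the stationary phase computation of Lemma \ref{lem:norm_qm_e} this yields
\[
\frac{\|(P_h-z)\widetilde{e}_0(z)\|}{\|\widetilde{e}_0(z)\|} = \mO\!\left(\e^{-\frac{1}{Ch}}\right),
\]
so $r(z) := (P_h-z)\widetilde{e}_0^{\mathrm{norm}}(z)$ satisfies $\|r(z)\|=\mO\!\left(\e^{-1/(Ch)}\right)$, while by Lemma \ref{lem:norm_qm_e} we have $\|\widetilde{e}_0^{\mathrm{norm}}(z)\|=1$.

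Next I would apply the Grushin problem of Proposition \ref{prop:GrushUnpertOp}. Evaluating $\mathcal{P}(z)$ at $(\widetilde{e}_0^{\mathrm{norm}},0)^T$ and inverting,
\[
\widetilde{e}_0^{\mathrm{norm}}(z) = E(z)\,r(z) + c_0(z)\,e_0(z), \qquad c_0(z):=(\widetilde{e}_0^{\mathrm{norm}}(z)|e_0(z)).
\]
Using $\|E(z)\|_{L^2\to H^1}=\mO(h^{-1/2})$ from \eqref{prop:GrushUnPert} gives $\|\widetilde{e}_0^{\mathrm{norm}}(z)-c_0(z)e_0(z)\|=\mO\!\left(\e^{-1/(C'h)}\right)$, and comparing norms forces $|c_0(z)|=1+\mO\!\left(\e^{-1/(C'h)}\right)$. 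Fixing the phase of $e_0(z)$ (admissible, since $|c_0|$ is close to $1$ so never zero) by $c_0(z)>0$ settles the case $\alpha=0$.

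For the derivative bounds, observe that $\widetilde{e}_0(x,z)$ is holomorphic in $z$ (Remark \ref{rem:holo}) and $\psi_+$ depends linearly on $z$, so $\partial_z^k\widetilde{e}_0 = (i(x-x_0)/h)^k\,\widetilde{e}_0$, losing only a factor $\mO(h^{-k})$; together with the fact that $\Phi_1$ depends only on $\Ima z$ with bounded smooth dependence, each $\partial_{z,\overline{z}}^\alpha$ applied to $\widetilde{e}_0^{\mathrm{norm}}$ and to $r(z)$ produces at most an $h^{-|\alpha|}$ loss, the exponential smallness of the latter being preserved because $\chi'$ still vanishes near $x_+$. I would then differentiate the identity $\widetilde{e}_0^{\mathrm{norm}}-c_0 e_0 = E(z)r(z)$ in $z,\overline{z}$ and control the derivatives of $E(z)$ and $c_0(z)$ via the Grushin problem iteratively, as in \cite{SjAX1002,Vo14}, each derivative losing at most one power of $h^{-1}$. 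The main technical obstacle is bookkeeping these polynomial $h^{-1}$ losses against the exponentially small factor $\e^{-1/(Ch)}$; since any $h^{-|\alpha|}\e^{-1/(Ch)}$ remains of the form $\mO\!\left(h^{-|\alpha|}\e^{-1/(C''h)}\right)$ for a slightly worse constant $C''$, the stated bound follows.
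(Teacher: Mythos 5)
Your approach for $\alpha=0$ is correct and, modulo the paper's choice of phase, is essentially forced: the quasimode error has exponential gain on $\supp\chi'$ because $\chi'$ avoids the non-degenerate minimum of $\Ima\psi_+$, and inverting the Grushin problem of Proposition \ref{prop:GrushUnpertOp} yields $\widetilde{e}_0^{\mathrm{norm}}=c_0 e_0 + E(z)r(z)$ with $\lVert E r\rVert=\mO(\e^{-1/(C'h)})$. Note the paper never "fixes the phase by fiat": the definition $e_0=\Pi_{t_0^2}\widetilde{e}_0^{\mathrm{norm}}/\lVert\Pi_{t_0^2}\widetilde{e}_0^{\mathrm{norm}}\rVert$ appearing just before the lemma forces $c_0=\lVert\Pi_{t_0^2}\widetilde{e}_0^{\mathrm{norm}}\rVert>0$ automatically, so the "admissibility" discussion you raise is already settled by that definition. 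This also signals that the route taken in the reference \cite[Prop.~3.11]{Vo14} the paper invokes is the \emph{spectral-projection} route (estimating $(1-\Pi_{t_0^2})\widetilde{e}_0^{\mathrm{norm}}$ via the spectral gap $t_1^2-t_0^2\gtrsim h$ of $Q(z)$), whereas you propose to go through the Grushin inverse. The two routes are closely related — the Grushin inverse $E(z)$ is essentially $(P_h-z)^{-1}$ on $e_0^\perp$ — but the projection route is cleaner for the derivative bounds, which is exactly where your sketch has a gap.

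The gap is circularity in the derivative step. You propose to differentiate $\widetilde{e}_0^{\mathrm{norm}}-c_0 e_0=E(z)r(z)$ and "control the derivatives of $E(z)$ and $c_0(z)$ via the Grushin problem iteratively". But $\partial_z\mathcal{E}=-\mathcal{E}(\partial_z\mathcal{P})\mathcal{E}$, and $\partial_z\mathcal{P}$ contains $\partial_z R_+$ and $\partial_z R_-$, which are $(\cdot\,|\,\partial_{\overline z}e_0)$ and $\partial_z f_0$, i.e.\ precisely the derivatives you are trying to estimate. The same circularity appears in $\partial_z c_0$, since $c_0=(\widetilde{e}_0^{\mathrm{norm}}|e_0)$. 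This is not fatal — it can be broken by first establishing \emph{crude} a priori polynomial bounds $\lVert\partial^\alpha e_0\rVert,\lVert\partial^\alpha f_0\rVert=\mO(h^{-M_\alpha})$ (e.g.\ from the contour-integral representation of $\Pi_{t_0^2}$ and the resolvent of $Q(z)$ on a circle of radius $\sim h$), and then observing that every problematic term in the differentiated identity carries an exponentially small prefactor (namely $(1-c_0)$, $r$, or $\partial^\beta r$), so the polynomial losses are harmless. As written, however, your proof asserts "each derivative loses one power of $h^{-1}$" and points to the very references whose argument you are supposed to be reproducing, without explaining how the self-reference closes. You should either switch to the projection route (bounding $\partial^\alpha[(1-\Pi_{t_0^2})\widetilde{e}_0^{\mathrm{norm}}]$ via the contour integral for $\Pi_{t_0^2}$ and the explicit formula for $\partial^\alpha\widetilde{e}_0^{\mathrm{norm}}$), or replace $R_\pm$ in the Grushin problem by the \emph{explicit} quasimodes $\widetilde{e}_0^{\mathrm{norm}},\widetilde{f}_0^{\mathrm{norm}}$ (whose $z$-derivatives you genuinely control), or spell out the bootstrap from crude polynomial bounds.

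One smaller point: the "main technical obstacle" you identify — bookkeeping $h^{-|\alpha|}$ against $\e^{-1/(Ch)}$ — is not actually an obstacle here, since the stated conclusion already allows the loss $h^{-|\alpha|}$; you only need to absorb the \emph{extra} half-powers of $h$ coming from $\lVert E\rVert=\mO(h^{-1/2})$ and $\lVert A^{-1/2}\rVert$, which is trivial. The real issue is the circularity above.
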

 \begin{proof}
 The proof of the lemma is similar to the proof of 
 \cite[Proposition 3.11]{Vo14}. 
 \end{proof}
 This result implies that 
 \begin{equation}
  \label{eqn:sp_ee_dec}
  (e_0(z)|e_0(w)) = 
    \e^{-\frac{1}{h}\Phi_1(z;h)-\frac{1}{h}\Phi_1(w;h)}
		    (\widetilde{e}_0(z)|\widetilde{e}_0(w))
     +\mO_{\mathcal{C}^{\infty}}\!\left(\e^{-\frac{1}{Ch}}\right).
 \end{equation}
 By Remark \ref{rem:holo}, 
 $(\widetilde{e}_0(z)|\widetilde{e}_0(w))$ is holomorphic 
 in $z$ and anti-holomorphic in $w$. We can study this scalar 
 product by the method of stationary phase:
 \begin{proof}[of Proposition \ref{prop:Sp_ee}]
  In view of \eqref{eqn:sp_ee_dec}, it remains to 
  study the oscillatory integral 
  \begin{equation}
   \label{eqn:Int2Treat}
   I(z,w) := (\widetilde{e}_0(z)|\widetilde{e}_0(w)) 
    = \int \chi(x)\exp\left({\frac{i}{h}\Psi_+(x,z,w)}\right) dx,
  \end{equation}
  where $\widetilde{e}_0(x,z)$ is given in \eqref{eqn:HoloQuasMod} 
  and $\Psi_+$ is defined by 
 \begin{equation}
 \label{eqn:defn_Psi_+}
  \Psi_+(x,z,w) : = \psi_+(x,z)- \overline{\psi_+(x,w)}, ~ z,w\in\Omega.
 \end{equation}
 Using \eqref{eqn:defn_psi_+}, 
 \begin{equation}
  \label{eqn:Ps_+_expl}
   \Psi_+(x,z,w) = \int_{x_0}^{x} \Rea(z-w) dy +
    2i\int_{x_0}^{x} 
    \left[\Ima \left(\frac{z+w}{2} \right)- 
	  \Ima  g(y)\right] dy.
  \end{equation}
 Since the imaginary part of $\Psi_+$ can be negative, we 
 shift the phase function by the minimum of $\Ima \Psi_+$.
 \\
 \par
 \textbf{Minimum of $\Ima \Psi_+$.} The critical points of 
 the function $x\mapsto \Ima \Psi(x,z,w)$ are given by 
 the equation $\Ima (\frac{z+w}{2}) = \Ima g(x)$. Since 
 $\Omega$ is convex, this equation has, for $|z-w|$ small 
 enough, on the support of $\chi$ the unique solution 
 $x_+(\frac{z+w}{2})\in\R$ and it satisfies 
 $\Ima g'(x_+(\frac{z+w}{2})) < 0$ (cf. \eqref{eqn:xpm}). 
 Moreover, it depends smoothly on $z$ and $w$ since $g$ is smooth. 
 Therefore,
  \begin{equation*}
   (\partial_{xx}^2\Ima\Psi_+)
   \left(x_+\left(\frac{z+w}{2}\right),z,w\right) 
   = - 2\Ima g_x'\left(x_+\left(\frac{z+w}{2}\right) \right)  >0,
  \end{equation*}
  which implies that $x_+(\frac{z+w}{2})$ is a minimum point, and 
  that 
  \begin{align}
   \label{eqn:Inf_Psi_+}
   2\lambda:= 2\lambda(z,w):&=
   \Ima \Psi_+\left(x_+\left(\frac{z+w}{2}\right),z,w\right)
   \notag \\
   & = 
   2\int_{x_0}^{x_+(\frac{z+w}{2})} 
    \left[\Ima \left(\frac{z+w}{2} \right)- 
	  \Ima  g(y)\right] dy \leq 0.
  \end{align}
 We define $\Theta_+(x,z,w) := \Psi_+(x,z,w)-i\lambda$, 
 and notice that $\Ima \Theta_+(x,z,w) \geq 0$. Hence, 
 we can write \eqref{eqn:Int2Treat} as follows: 
 \begin{equation}
  \label{eqn:I_2}
  I(z,w) = \e^{-\frac{2\lambda}{h}}
  \int \chi(x)\exp\left({\frac{i}{h}\Theta_+(x,z,w)}\right) dx.
 \end{equation}
 To study $I(z,w)$ by the method of stationary 
 phase, we are interested in the critical points 
 of $\Theta_+$.
 \\
 \par
 \textbf{Critical points of $\Theta_+$.} Clearly they 
 are the same as for  $\Psi_+(x,z,w)$. Note that for $z=w$ one has that 
  \begin{equation*}
    \Psi_+(x,z,z) = 2i\Ima \int_{x_0}^{x}(z-g(y))dy
  \end{equation*}
  which has, on the support of $\chi$, the unique 
  critical point $x_+$ and it satisfies 
  $\Ima g'(x_+) < 0$ (cf. \eqref{eqn:xpm}). Therefore,
  \begin{equation*}
   \Ima (\partial_{xx}^2\Psi_+)(x_+(z),z,z) = - 2\Ima g_x'(x_+(z))
   >0
  \end{equation*}
  which implies that $x_+$ is a non-degenerate critical point. 
  \par 
  In the case where $z\neq w$ the situation is more complicated. 
  By  \eqref{eqn:Ps_+_expl} we see that if $\Rea (z-w) =0$, for $|z-w|$ 
  small enough, the critical point is real and given by 
  $x_+(\frac{z+w}{2})$, i.e. the minimum point of $\Ima \Psi_+$. 
  \par
  However, if $\Rea (z-w) \neq 0$, we need to consider an almost $x$-analytic 
  extension of $\Psi_+$, which we shall denote by $\widetilde{\Psi}_+$. 
  As described in \cite{MelSj74}, the ``critical point'' of 
  $\widetilde{\Psi}_+$ is then given by 
  \begin{equation*}
   \partial_x \widetilde{\Psi}_+(x,z,w) = 0,
  \end{equation*}
  and we will see, by the following result, that it ``moves'' to the complex 
  plane. 
 \begin{lem}
 \label{lem:CP_ee}
 Let $\Omega\Subset\Sigma$ be as in \eqref{eq_i35}. Let $\chi$ be as 
 in (\ref{eqn:HoloQuasMod}) and let $p$ be 
 the principal symbol of $P_h$ (cf \eqref{eq_I3}). Let 
 $x_{+}(z)$ be as in \eqref{eqn:xpm}. Furthermore, let $\widetilde{\psi}_+$ 
 denote an almost analytic extension of $\psi_+$ to a small complex 
 neighborhood of the support of $\chi$, and define 
 $\widetilde{\psi}_+^*(x):=\overline{\widetilde{\psi}_+(\overline{x})}$. 
 Then, the there exists a $C>0$ such that for $(z,w)\in \Delta_{\Omega}(C)$ 
 the function
 \begin{equation*}
   \partial_x \widetilde{\Psi}_+(x,z,w)=  \partial_x \widetilde{\psi}_+(x,z) 
	- ( \partial_x \widetilde{\psi}_+)^*(x,w)
 \end{equation*}
 has exactly one zero, $x_{+}^c(z,w)$, and:
 \begin{itemize}
  \item it depends almost holomorphically on $z$ 
	and almost anti-holomorphically $w$ at the diagonal 
        $\Delta$, i.e. 
  \begin{equation*}
    \partial_{w}x_+^c(z,w), \partial_{\overline{z}}x_+^c(z,w) 
     = 
     \mO(|z-w|^{\infty});
   \end{equation*}
 \item it is non-degenerate in the sense that 
    \begin{equation*}
    (\partial_{xx}^2 \widetilde{\Psi}_+)(x_{+}^c(z,w),z,w) \neq 0;
   \end{equation*}
 \item for $z,w\in\Omega$ with $|z-w| < 1/C$, $C>1$ large enough, 
 one has
 \begin{equation*}
   x_+^c(z,w) = x_{+}\left(\frac{z+w}{2}\right)
    - \frac{\Rea (z-w) }{\{p,\overline{p}\}(\rho_{+}\left(\frac{z+w}{2}\right))} 
    + \mO(|z-w|^2).
  \end{equation*}
 \end{itemize}
\end{lem}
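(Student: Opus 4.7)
The plan is to apply the implicit function theorem to
\begin{equation*}
\partial_x \widetilde{\Psi}_+(x,z,w) = \bigl(z - \widetilde{g}(x)\bigr) - \overline{\bigl(w - \widetilde{g}(\overline{x})\bigr)} = z - \overline{w} - \widetilde{g}(x) + \overline{\widetilde{g}(\overline{x})},
\end{equation*}
viewed as a smooth map in $(x,z,w)$. This expression is exactly affine in $z$ and in $\overline{w}$, and almost holomorphic in $x$ because $\widetilde{g}$ is. On the diagonal $w=z$ with real $x\in\supp\chi$ it reduces to $2i(\Ima z-\Ima g(x))$, whose unique zero is $x_+(z)$ by \eqref{eqn:xpm}. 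Writing $x=u+iv$ and Taylor-expanding $\widetilde{g}(u\pm iv)=g(u)\pm iv\,g'(u)+\mO(v^2)$ one sees that for $z=w$ the full complex equation becomes $2i\bigl(\Ima z-\Ima g(u)\bigr)+2v\,\Ima g'(u)+\mO(v^2)=0$, whose only solution near the real axis is $v=0$, $u=x_+(z)$; this handles global uniqueness in a complex neighborhood of $\supp\chi$ for $(z,w)$ on the diagonal.

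Next I compute the complex $x$-derivative at this base point,
\begin{equation*}
\partial_x^2 \widetilde{\Psi}_+(x_+(z),z,z) = -g'(x_+(z)) + \overline{g'(x_+(z))} = -2i\,\Ima g'(x_+(z)) = \{p,\overline{p}\}(\rho_+(z)),
\end{equation*}
which is non-zero on $\Omega$ since $\Ima g'(x_+(z))<0$ by \eqref{eqn:xpm}. The implicit function theorem therefore produces a unique $x_+^c(z,w)\in\C$ near $x_+\bigl(\tfrac{z+w}{2}\bigr)$ for all $(z,w)\in\Delta_\Omega(C)$ with $C$ large enough; global uniqueness in the complex tube where $\widetilde{g}$ is defined follows from the diagonal case by continuity. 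Non-degeneracy $(\partial_{xx}^2\widetilde{\Psi}_+)(x_+^c,z,w)\neq 0$ is then automatic in a smaller neighborhood.

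For the almost-analytic dependence on $z$ and $\overline{w}$, I differentiate the implicit relation $\partial_x\widetilde{\Psi}_+(x_+^c,z,w)=0$ using Wirtinger calculus to obtain
\begin{equation*}
(\partial_x^2\widetilde{\Psi}_+)\,\partial_{\overline{z}}x_+^c + (\partial_{\overline{x}}\partial_x\widetilde{\Psi}_+)\,\overline{\partial_z x_+^c} + \partial_{\overline{z}}\partial_x\widetilde{\Psi}_+ = 0,
\end{equation*}
and the analogous identity in $w$. The inhomogeneous term $\partial_{\overline{z}}\partial_x\widetilde{\Psi}_+$ vanishes \emph{identically} by the exact $z$-affinity, while $\partial_{\overline{x}}\partial_x\widetilde{\Psi}_+=\mO(|\Ima x|^\infty)$ by almost-holomorphy of $\widetilde{g}$. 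Combined with the bound $|\Ima x_+^c|=\mO(|z-w|)$ from the Taylor step below, this yields $\partial_{\overline{z}}x_+^c,\,\partial_w x_+^c = \mO(|z-w|^\infty)$.

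Finally, for the Taylor expansion, I set $z_0=(z+w)/2$, $\eta=(z-w)/2$, $x=x_+(z_0)+y$, and linearize $\partial_x\widetilde{\Psi}_+=0$ at $(x_+(z_0),z_0,z_0)$. Using
\begin{equation*}
\partial_z\partial_x\widetilde{\Psi}_+=1,\qquad \partial_{\overline{w}}\partial_x\widetilde{\Psi}_+=-1,\qquad \partial_{\overline{z}}\partial_x\widetilde{\Psi}_+=\partial_w\partial_x\widetilde{\Psi}_+=0,
\end{equation*}
together with $\partial_x^2\widetilde{\Psi}_+=\{p,\overline{p}\}(\rho_+(z_0))$, the first-order equation is $y\,\{p,\overline{p}\}(\rho_+(z_0))+\eta+\overline{\eta}=0$, giving $y=-\Rea(z-w)/\{p,\overline{p}\}(\rho_+(z_0))$, with the $\mO(|z-w|^2)$ remainder absorbed by standard higher-order Taylor control. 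The main technical nuisance will be the bookkeeping of the $\mO(|\Ima x|^\infty)$ corrections from the almost-analytic extension: these must be shown negligible in the global uniqueness argument at complex $x$ and in the implicit differentiation step, while everything else is a routine application of the implicit function theorem.
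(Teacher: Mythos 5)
Your proof takes essentially the same route as the paper: implicit function theorem applied to the almost-analytic equation $z-\overline{w}-\widetilde{g}(x)+\widetilde{g}^*(x)=0$, Wirtinger differentiation of the implicit relation to obtain the $\mO(|\Ima x_+^c|^\infty)$ bounds on $\partial_{\overline{z}}x_+^c$ and $\partial_w x_+^c$, and a first-order Taylor expansion around $\bigl(x_+(\tfrac{z+w}{2}),\tfrac{z+w}{2},\tfrac{z+w}{2}\bigr)$ identifying the coefficient with the Poisson bracket. Your computation of $\partial_x^2\widetilde{\Psi}_+$ and your reduction of the linearized equation to $y\{p,\overline{p}\}(\rho_+(z_0))+\Rea(z-w)=0$ match the paper's derivation; the only place where you are somewhat looser than the paper is in the claim that global uniqueness of the complex critical point off the diagonal ``follows by continuity,'' which needs the nonvanishing of the $x$-derivative of $\partial_x\widetilde{\Psi}_+$ uniformly on a complex neighborhood of $\overline{x_+(\Omega)}$, as the paper points out, rather than continuity alone; this is a minor tightening, not a different argument.
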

 \begin{rem}
  The proof of Lemma \ref{lem:CP_ee} will be given 
  after the proof of Proposition \ref{prop:Sp_ee}. 
 \end{rem}
  Let $\widetilde{\Psi}_+$ denote an almost $x$-analytic 
  extension of $\Psi_+$. Using the method of stationary 
  phase for complex-valued phase functions (cf. Theorem 2.3 in 
  \cite[p.148]{MelSj74}) and Lemma \ref{lem:CP_ee}, one gets 
  that
  \begin{equation}
   \label{eqn:I_1}
   I(z,w)= 
         \exp\left\{\frac{2\Psi_1(z,w;h)}{h}\right\}
         + \mO\!\left(h^{\infty}\right)\e^{-\frac{2\lambda}{h}}.
  \end{equation}
  Using that Lemma \ref{lem:norm_qm_e} and \eqref{eqn:Inf_Psi_+} imply 
  $\lambda(z,w) + \Phi(z;h) + \Phi(w;h) \geq 0$, we obtain 
  \eqref{eqn:Sp_ee_1} from the above and \eqref{eqn:sp_ee_dec}.
  \\
  \par
  In \eqref{eqn:I_1}, $2\Psi_1(z,w)$ is given by the critical value of 
  $i \widetilde{\Psi}_+$ and by the logarithm of the amplitude 
  $c(z,w,h)$, given by the stationary phase method, i.e. 
  \begin{equation*}
   2\Psi_1(z,w;h) = i \widetilde{\Psi}_+(x_+^c(z,w),z,w) + h \ln c(z,w,h)
  \end{equation*}
  and $c(z,w,h) \sim c_0(z,w) + hc_1(z,w) + \dots$ which depends smoothly 
  on $z$ and $w$ in the sense that all $z$-,$\bar{z}$-,$w$- and 
  $\bar{w}$-derivatives remain bounded as $h\rightarrow 0$. 
  $\widetilde{\Psi}_+(x,z,w)$ is by definition $z$-holomorphic, $w$-anti-holomorphic 
  and smooth in $x$. By Lemma \ref{lem:CP_ee}, we know that 
  the critical point $x_+^c(z,w)$ is almost $z$-holomorphic and almost 
  $w$-anti-holomorphic in $\Delta_{\Omega}(C)$, a small neighborhood of the 
  diagonal $z=w$. Hence, $\Psi$ is almost $z$-holomorphic and almost 
  $w$-anti-holomorphic in $\Delta_{\Omega}(C)$. 
  \par
  Equivalently, $\Psi$ is an almost $z$-holomorphic and almost 
  $w$-anti-holomorphic extension from the diagonal of $\Psi_1(z,z;h)$.
  Since $\Psi_1(z,z;h) = \Phi_1(z;h)$, we obtain by Taylor expansion 
  up to order $2$ of $\Psi$ at $(\frac{z+w}{2},\frac{z+w}{2})$, that 
  \begin{align*}
   \Psi_1(z,w;h) 
    = \sum_{|\alpha+\beta|\leq 2}&
	\frac{1}{2^{|\alpha+\beta|}\alpha!\beta!}
        \partial_z^{\alpha}\partial_{\overline{z}}^{\beta}
        \Phi_1\left(\frac{z+w}{2};h\right) 
         (z-w)^{\alpha}(\overline{w-z})^{\beta}
         \notag \\
         &+ 
	\mO(|z-w|^3+h^{\infty}),
  \end{align*}
  for $|z-w|$ small enough. Similarly, 
  \begin{align*}
   \Phi_1(z;h) 
    = \sum_{|\alpha+\beta|\leq 2}&
	\frac{1}{2^{|\alpha+\beta|}\alpha!\beta!}
        \partial_z^{\alpha}\partial_{\overline{z}}^{\beta}
        \Phi_1\left(\frac{z+w}{2};h\right)  
         (z-w)^{\alpha}(\overline{z-w})^{\beta}
         \notag \\
         &+ 
	\mO(|z-w|^3+h^{\infty}), 
  \end{align*}
  which implies that  
  \begin{align*}
   2\Rea \Psi_1(z,w;h)&= 
   \Phi_1(z;h)  + \Phi_1(w;h) 
    -
        \partial_z^{\alpha}\partial_{\overline{z}}^{\beta}
        \Phi_1\left(\frac{z+w}{2};h\right)  
         |z-w|^2
         \notag \\
         &+ 
	\mO(|z-w|^3+h^{\infty}),
  \end{align*}
  concluding the proof of the second point of the proposition. 
  \\
  \par
  Finally, let us give a proof of the stated 
  symmetries. The fact that $\Psi_1(z,w;h) = \overline{\Psi_1(w,z;h)} $ 
  follows directly from the fact that $(e_0(z)|e_0(w)) = 
  \overline{(e_0(w)|e_0(z))}$. One then computes that 
  \begin{equation*}
   (\partial_z\Psi_1)(z,w;h) = 
   \partial_z\Psi_1(z,w;h) = 
   \overline{\partial_{\overline{z}}\Psi_1(w,z;h)}
   =
   \overline{(\partial_{\overline{w}}\Psi_1)(w,z;h)}
  \end{equation*}
 which concludes the proof of the Proposition.
  \end{proof}
 \begin{proof}[of Lemma \ref{lem:CP_ee}]
  We are interested in the solutions of the following equation:
 \begin{equation}
 \label{eqn:CP_psi+}
  0 =(\partial_x\widetilde{\psi}_+)(x,z) 
    - (\partial_x\widetilde{\psi}_+)^*(x,w) 
    = z - \overline{w} 
       -\widetilde{g}(x) 
	    + \widetilde{g}^*(x),
 \end{equation}
 where $\widetilde{g}$ denotes an almost analytic extension 
 of $g$. Since $\dist(\Omega,\partial\Sigma) > 1/C$, it follows 
 from the assumptions on $g$ that $\Ima g'(x) > 0$ for all 
 $x\in \overline{x_+(\Omega)}\subset\R$. Since $g$ depends 
 smoothly on $x$, there exists a small complex open 
 neighborhood $V\subset\C$ of $\overline{x_+(\Omega)}$ such that 
 $\overline{x_+(\Omega)}\subset (V\cap\R)$ and such that 
 for all $x \in V$
  \begin{equation*}
   \widetilde{g}'_x(x) 
	    - \overline{\widetilde{g}'_{x}(\overline{x})} \neq 0, 
	    \quad 
  \widetilde{g}'_{\overline{x}}(x) 
	    - \overline{\widetilde{g}'_{\overline{x}}(\overline{x})}  = \mO(|\Ima x|^{\infty}).
  \end{equation*}
 Thus, it follows by the implicit function theorem, that for 
 $(z,w)\in \Delta_{\Omega}(C)$, with $C>0$ large enough, there exists a 
 unique solution $x_{+}^c(z,w)$ to (\ref{eqn:CP_psi+}) 
 and it depends smoothly on $(z,w)\in \Delta_{\Omega}(C)$. Furthermore,
 we have that $x_+^c(z,z) = x_+(z)\in\R$. Taking the $z$- and $\overline{z}$-
 derivative of (\ref{eqn:CP_psi+}) at the critical point $x_+^c$ yields that 
 \begin{align}
 \label{eqn:cp_zder}
  &\partial_{z}x_{+}^c(z,w)=
   \frac{1 +\mO(|\Ima x_{+}^c(z,w)|^{\infty})}
	{ (\partial_x\widetilde{g})(x_{+}^c(z,w)) 
	    -(\partial_x\widetilde{g})^*(x_{+}^c(z,w))},
  \notag \\
   &\partial_{\overline{z}}x_{+}^c(z,w)=
   \frac{\mO(|\Ima x_{+}^c(z,w)|^{\infty})}
	{ (\partial_x\widetilde{g})(x_{+}^c(z,w)) 
	    -(\partial_x\widetilde{g})^*(x_{+}^c(z,w))}
 \end{align}
 and similarly that 
 \begin{align}
 \label{eqn:cp_wder}
  &\partial_{\overline{w}}x_{+}^c(z,w)=
   \frac{-1 +\mO(|\Ima x_{+}^c(z,w)|^{\infty})}
	{ (\partial_x\widetilde{g})(x_{+}^c(z,w)) 
	    -(\partial_x\widetilde{g})^*(x_{+}^c(z,w))},
  \notag \\
   & \partial_{w}x_{+}^c(z,w)=
   \frac{\mO(|\Ima x_{+}^c(z,w)|^{\infty})}
	{ (\partial_x\widetilde{g})(x_{+}^c(z,w)) 
	    -(\partial_x\widetilde{g})^*(x_{+}^c(z,w))} . 
 \end{align}
 Using that $\Ima x_+^c(z,z) =0$, one calculates 
 that for $z=w$ we have that 
  \begin{align}
   & (\partial_z x_{+}^c)(z,z) = \partial_z x_+(z)
	    = - (\partial_{\overline{w}}x_{+}^c)(z,z)
      ,
      \notag\\ 
    & \text{and} ~ 
   (\partial_{\overline{z}} x_{+}^c)(z,z) = 0
   =  (\partial_{w}x_{+}^c)(z,z),
  \end{align}
 where 
 \begin{equation*}
   \partial_z x_+(z)=
	  \frac{1 }{2 i\Ima g'(x_{+}(z))}.
 \end{equation*}
 Taylor's theorem implies that 
 \begin{equation*}
   x_{+}^c(z+\zeta,z+\omega) = x_{+}(z)
	 + \frac{\zeta - \overline{\omega} }{2 i\Ima g'(x_{+}(z))}
	 + \mO((\zeta,\omega)^2).
 \end{equation*}
 Recall that the principal symbol of the operator $P_h$ is given by 
 $p(\rho) = \xi + g(x)$ (cf \eqref{eq_I3}), 
 which implies that $\{p,\overline{p}\}(\rho_{\pm}(z) = 
 -2i \Ima g'(x_{\pm}(z))$. To conclude the symmetric form of the 
 Taylor expansion stated in the Lemma, we expand around 
 the point $(\frac{z+w}{2},\frac{z+w}{2})$, for $|z-w|$ small enough, with 
 $\zeta = \frac{z-w}{2}$ and $\omega = -\frac{z-w}{2}$, which is 
 possible since $\Omega$ is by \eqref{eq_i35} assumed to be convex. 
 \par
 Finally, by taking the imaginary part of the Taylor expansion 
 of $x_+^c$, we conclude by (\ref{eqn:cp_zder}) and 
 (\ref{eqn:cp_wder}) that 
 \begin{equation*}
  \partial_{w}x_+^c(z,w), \partial_{\overline{z}}x_+^c(z,w) 
   = 
   \mO(|z-w|^{\infty}). \qedhere
 \end{equation*}
 \end{proof}
\subsection{The Scalar Product $(f_0(w)|f_0(z))$}
\label{susec:SP_ff}
 We have, as in Section \ref{susec:SP_ee}, 
 \begin{prop}
 \label{prop:Sp_ff}
 Let $\Omega\Subset\Sigma$ be as in Hypothesis \ref{Hyp:H6} 
 and let $x_-(z)$ be as in \eqref{eqn:xpm}. 
 Then, there exists a constant $C>0$ such that for all $(z,w)\in 
 \Delta_{\Omega}(C):=\{(z,w)\in\Omega^2;~ |z-w|<1/C\}$ 
  \begin{equation*}
   (f_0(w)|f_0(z)) = 
      \e^{-\frac{1}{h}\Phi_2(z;h)}
      \e^{-\frac{1}{h}\Phi_2(w;h)}
      \e^{\frac{2}{h}\Psi_2(z,w;h)}
         + \mO\!\left(h^{\infty}\right),
  \end{equation*}
 where: 
 \begin{itemize}
  \item $\Phi_2(\cdot;h):\Omega \rightarrow \R$ is a family 
        of smooth functions depending only on $\Ima z$, 
        which satisfy
	\begin{equation*}
	  \Phi_2(z;h) = 
	   - \Ima \int_{x_-(z)}^{x_0} (z- g(y))dy 
	    +\frac{h}{4}\ln \left(\frac{\pi h}{\Ima g'(x_-(z))}\right)
	    +\mO(h^2).
        \end{equation*}
  \item $\Psi_2(\cdot,\cdot;h):\Delta_{\Omega}(C) \rightarrow \C$ is 
   a family of smooth functions which are almost $z$-holo\-morphic 
   and almost $w$-anti-holomorphic extensions from the diagonal 
   $\Delta:=\{(z,z); z\in\Omega\}\subset \Delta_{\Omega}(C)$ 
   of $\Phi_2(z;h)$, i.e. 
    \begin{equation*}
     \partial_{\overline{z}}\Psi_2,\partial_{w}\Psi_2
      = \mO(|z-w|^{\infty}), ~ \Psi_2(z,z;h) = \Phi_2\left(\frac{1}{2}(z-\overline{z});h\right)
    \end{equation*}
    Moreover, for $z,w\in \Delta_{\Omega}(C)$ with $|z-w|\ll 1$, 
    one has that 
	\begin{align*}
	  \Psi_2(z,w;h) 
	  = \sum_{|\alpha+\beta|\leq 2}&
	      \frac{1}{2^{|\alpha+\beta|}\alpha!\beta!}
	    \partial_z^{\alpha}\partial_{\overline{z}}^{\beta}
	      \Phi_2\left(\frac{z+w}{2};h\right) 
	    (z-w)^{\alpha}(\overline{w-z})^{\beta}
	    \notag \\
	    &+ 
	    \mO(|z-w|^3+h^{\infty}),
       \end{align*}
	and that 
	\begin{align*}
	 2\Rea &\Psi_2(z,w;h)-\Phi_2(z;h)- \Phi_2(w;h) 
	 \notag \\
	 &= -
        \partial_z\partial_{\overline{z}}
        \Phi_2\left(\frac{z+w}{2};h\right)  
         |z-w|^2(1 + \mO(|z-w|+h^{\infty}));
	\end{align*}
   \item the function $\Psi_2(z,w;h)$ has the following symmetries: 
      \begin{equation*}
       \Psi_2(z,w;h) = \overline{\Psi_2(w,z;h)} 
       \quad
       \text{and}
       \quad
       (\partial_z\Psi_2)(z,w;h) = 
       \overline{(\partial_{\overline{w}}\Psi_2)(w,z;h)} .
      \end{equation*}
 \end{itemize}
 \end{prop}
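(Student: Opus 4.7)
The plan is to mirror the proof of Proposition \ref{prop:Sp_ee} step by step, replacing the operator $P_h-z$ by its adjoint $(P_h-z)^* = hD_x + \overline{g(x)}-\overline{z}$ and the critical point $x_+(z)$ by $x_-(z)$. First I would use that $f_0(z)$ is an eigenfunction of $\widetilde{Q}(z)=(P_h-z)(P_h-z)^*$ for the smallest eigenvalue $t_0^2$, so up to an exponentially small error $f_0(z)$ is approximated by a suitably normalised function in the kernel of $(P_h-z)^*$ localised near $x_-$. A function of the form $u=\exp(i\psi_-(x,z)/h)$ solves $(P_h-z)^*u=0$ precisely when $\partial_x\psi_- = \overline{z}-\overline{g(x)}$, which integrates to
\begin{equation*}
 \psi_-(x,z) := \int_{x_0}^x(\overline{z}-\overline{g(y)})\,dy,
\end{equation*}
a phase that is anti-holomorphic in $z$. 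Picking $\chi_-\in\mathcal{C}^\infty_0(]a,b[)$ supported in a small neighbourhood of $x_-(\Omega)$, I would set $\widetilde{f}_0(x,z) := \chi_-(x)\exp(i\psi_-(x,z)/h)$.

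The second step is the $L^2$-norm computation. On $\supp\chi_-$, the function $x\mapsto \Ima\psi_-(x,z) = \int_{x_0}^x(\Ima g(y)-\Ima z)\,dy$ has the unique non-degenerate critical point $x_-(z)$ with $\partial_{xx}^2\Ima\psi_-(x_-,z) = \Ima g'(x_-) > 0$, so it attains a minimum there. Real stationary phase then yields $\|\widetilde{f}_0(z)\|^2 = \exp(2\Phi_2(z;h)/h)$ with $\Phi_2$ of the form claimed in the proposition, depending only on $\Ima z$; and the spectral projection argument of \cite[Prop.~3.11]{Vo14} adapts verbatim to show that the normalisation of $\widetilde{f}_0$ coincides with $f_0$ modulo $\mO(h^{-|\alpha|}\e^{-1/Ch})$ in all $(z,\overline{z})$-derivatives. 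Consequently
\begin{equation*}
 (f_0(w)|f_0(z)) = \e^{-\frac{1}{h}\Phi_2(z;h)-\frac{1}{h}\Phi_2(w;h)}(\widetilde{f}_0(w)|\widetilde{f}_0(z)) + \mO_{\mathcal{C}^\infty}(\e^{-1/Ch}).
\end{equation*}

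The core of the proof is then the stationary phase evaluation of
\begin{equation*}
 J(z,w) := (\widetilde{f}_0(w)|\widetilde{f}_0(z)) = \int \chi_-^2(x)\exp\!\left(\tfrac{i}{h}\Psi_-(x,z,w)\right)dx,
\end{equation*}
where $\Psi_-(x,z,w) := \psi_-(x,w)-\overline{\psi_-(x,z)}$ is holomorphic in $z$ and anti-holomorphic in $w$. As in Proposition \ref{prop:Sp_ee}, I would first subtract the minimum value of $\Ima \Psi_-$, which is attained at $x_-\!\left(\tfrac{z+w}{2}\right)$ and is non-positive, to make the imaginary part of the shifted phase non-negative, and then apply Theorem 2.3 of \cite{MelSj74} to an almost $x$-analytic extension $\widetilde{\Psi}_-$. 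The only step requiring genuine work is the analogue of Lemma \ref{lem:CP_ee}: the equation $\partial_x\widetilde{\Psi}_-=0$ has, on $\Delta_\Omega(C)$, a unique non-degenerate critical point $x_-^c(z,w)$ with $x_-^c(z,z)=x_-(z)$, depending almost $z$-holomorphically and almost $w$-anti-holomorphically on $(z,w)$. The implicit function theorem argument goes through because $\Ima g'(x_-) > 0$ on $\overline{x_-(\Omega)}$, and a Taylor expansion at the diagonal gives the mirror formula $x_-^c(z+\zeta,z+\omega) = x_-(z) + (\zeta-\overline\omega)/(2i\Ima g'(x_-(z))) + \mO((\zeta,\omega)^2)$.

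Finally, setting $2\Psi_2(z,w;h) := i\widetilde{\Psi}_-(x_-^c(z,w),z,w) + h\ln c(z,w,h)$ with $c\sim c_0+hc_1+\cdots$ the amplitude produced by stationary phase, the almost-analytic dependence of both $x_-^c$ and the phase yields $\partial_{\overline z}\Psi_2,\partial_w\Psi_2 = \mO(|z-w|^\infty)$; the identity $\Psi_2(z,z;h)=\Phi_2(z;h)$ follows by comparing the complex and real stationary phase calculations on the diagonal. The Taylor expansion to order two about $\tfrac{z+w}{2}$ and the asymptotic formula for $2\Rea\Psi_2-\Phi_2(z)-\Phi_2(w)$ then come out exactly as in the proof of Proposition \ref{prop:Sp_ee}. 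The symmetry $\Psi_2(z,w;h)=\overline{\Psi_2(w,z;h)}$ is immediate from $(f_0(w)|f_0(z))=\overline{(f_0(z)|f_0(w))}$, and the derivative symmetry is obtained by differentiating this identity. The main obstacle is thus not a new analytic difficulty but the careful bookkeeping for the adjoint eikonal equation and the sign changes induced by switching the critical point from $x_+$ to $x_-$.
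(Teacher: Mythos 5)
Your proposal is correct and follows exactly the route the paper intends: the paper does not actually write out a proof of Proposition \ref{prop:Sp_ff}, stating only that it is obtained ``as in Section \ref{susec:SP_ee}'', and your argument is precisely the expected mirror image of the proof of Proposition \ref{prop:Sp_ee} for the adjoint eikonal equation, with $x_+$ replaced by $x_-$, $\Ima g'(x_-)>0$ supplying the non-degeneracy, and the stationary-phase and almost-analytic extension steps carried over verbatim.
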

 \subsection{Link with the symplectic volume}
 Before the proof of Proposition \ref{prop:SP_XX}, 
 let us give a short description of the connection 
 between the functions $\Phi_1(z;h)$, $\Phi_2(z;h)$ 
 in Proposition \ref{prop:Sp_ee}, \ref{prop:Sp_ff}, 
 and the symplectic volume form on the phase space $T^*S^1$. 
 \begin{prop}
 \label{prop:Hess_Phas_SymplVol}
  Let $z\in\Omega\Subset\Sigma$ be as in \eqref{eq_i35} 
  and let $\Phi_1$ and $\Phi_2$ be 
  as in Propositions \ref{prop:Sp_ee} and \ref{prop:Sp_ff}. Furthermore, 
  let $p$ be the principal symbol of $P_h$ (cf \eqref{eq_I3}), let $\rho_{\pm}\in T^*S^1$ 
  be the two solutions to $p(\rho)=z$, see \eqref{eqn:xpm}. 
  Then, 
  \begin{align*}
   \sigma_h(z): &= \left[(\partial^2_{z\overline{z}}\Phi_1)(z;h)
     +(\partial^2_{z\overline{z}}\Phi_2)(z;h)\right]
     \notag \\
     &= \frac{1}{4}\left( 
   \frac{1}{\frac{1}{2i}\{\overline{p},p\}(\rho_{-}(z))}
   +
   \frac{1}{\frac{1}{2i}\{p,\overline{p}\}(\rho_{+}(z))}
   \right)
   +\mO(h)
  \end{align*}
  is, up to an error of order $h$, one-fourth of the Lebesgue density of the 
  direct image, under the principal symbol $p$, of the symplectic 
  volume form $d\xi \wedge dx$ on $T^*S^1$, i.e.
  \begin{equation*} 
   \sigma_h(z)L(dz) = \frac{1}{4} p_*(d\xi\wedge dx) + \mO(h)L(dz)
  \end{equation*}
 \end{prop}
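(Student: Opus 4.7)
The plan is to reduce the computation of $\sigma_h(z)$ to an explicit differentiation of the leading-order formulas given in Propositions \ref{prop:Sp_ee} and \ref{prop:Sp_ff}, and then to match the result with a direct Jacobian computation of the push-forward $p_*(d\xi\wedge dx)$. First I would argue that $(\partial^2_{z\bar z}\Psi_1)(z;h) = (\partial^2_{z\bar z}\Phi_1)(z;h) + \mO(h^{\infty})$ at the diagonal, so that it suffices to study $\partial^2_{z\bar z}(\Phi_1+\Phi_2)$. Indeed, by Proposition \ref{prop:Sp_ee} we have $\Psi_1(z,z;h) = \Phi_1(z;h)$, and the almost-analyticity estimates $\partial_{\bar z}\Psi_1, \partial_w\Psi_1 = \mO(|z-w|^{\infty})$ imply that the cross-derivatives coming from the chain rule for $F(z) := \Psi_1(z,z;h)$ contribute only flat errors at the diagonal. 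Since $\Phi_1$ and $\Phi_2$ depend only on $\Ima z$, the Wirtinger identity collapses to $\partial_z\partial_{\bar z}\Phi_j = \tfrac{1}{4}\partial_{\Ima z}^2 \Phi_j$.

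Next I would explicitly compute these second derivatives. Writing $b = \Ima z$ and differentiating the formula $\Phi_1(z;h) = \Ima \int_{x_+(b)}^{x_0}(z - g(y))dy + \mO(h)$, the terms produced by the moving endpoint cancel because the defining relation $\Ima g(x_+(b)) = b$ holds, leaving $\partial_b\Phi_1 = x_0 - x_+(b) + \mO(h)$. A further differentiation yields $\partial_b^2\Phi_1 = -x_+'(b) + \mO(h)$, and differentiating $\Ima g(x_+(b)) = b$ gives $x_+'(b) = 1/\Ima g'(x_+(b))$, so that $\partial_b^2 \Phi_1 = 1/(-\Ima g'(x_+)) + \mO(h)$; the analogous computation for $\Phi_2$ produces $\partial_b^2\Phi_2 = 1/\Ima g'(x_-) + \mO(h)$. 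Both quantities are positive by \eqref{eqn:xpm}.

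To translate these expressions into Poisson brackets, I would use $p(x,\xi) = \xi + g(x)$ to compute
\begin{equation*}
\{p,\bar p\}(x,\xi) = \partial_\xi p \cdot \partial_x \bar p - \partial_x p \cdot \partial_\xi \bar p = \bar g'(x) - g'(x) = -2i\,\Ima g'(x),
\end{equation*}
so $\tfrac{1}{2i}\{p,\bar p\}(\rho_+) = -\Ima g'(x_+)$ and $\tfrac{1}{2i}\{\bar p,p\}(\rho_-) = \Ima g'(x_-)$. Combining with the previous paragraph and the factor $\tfrac{1}{4}$ from the Wirtinger reduction yields exactly the asserted identity for $\sigma_h(z)$.

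Finally, to identify $\sigma_h$ with a quarter of the Lebesgue density of $p_*(d\xi\wedge dx)$, I would observe that for each $z \in \mathring\Sigma$ the equation $p(x,\xi)=z$ has precisely the two preimages $\rho_\pm(z)$, and the Jacobian determinant of $(x,\xi)\mapsto(\Rea p,\Ima p) = (\xi + \Rea g(x), \Ima g(x))$ equals $-\Ima g'(x)$. Summing $|J|^{-1}$ over the two preimages gives $p_*(d\xi\wedge dx) = \bigl(1/(-\Ima g'(x_+)) + 1/\Ima g'(x_-)\bigr) L(dz)$, and comparison with the Poisson-bracket expression above yields $\sigma_h(z)L(dz) = \tfrac{1}{4}p_*(d\xi\wedge dx) + \mO(h)L(dz)$. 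The main obstacle of the argument is really only a bookkeeping one: verifying that the mixed derivative of $\Psi_1$ at the diagonal coincides with that of $\Phi_1$, which is handled by the flat error control on $\partial_{\bar z}\Psi_1$ and $\partial_w\Psi_1$; the remaining steps are direct computation.
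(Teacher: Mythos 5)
Your argument is correct and follows essentially the same route as the paper's proof: reduce to the computation of $\partial_z\partial_{\bar z}(\Phi_1+\Phi_2)$ via the Wirtinger identity for functions of $\Ima z$, use the moving-endpoint cancellation to get $\partial_b^2\Phi_1 = -x_+'(b) + \mO(h)$ and likewise for $\Phi_2$, and then translate the resulting $-\Ima g'(x_+)$ and $\Ima g'(x_-)$ into Poisson brackets. Two small differences are worth noting. First, you explicitly address the fact that the statement writes $(\partial^2_{z\bar z}\Psi_1)$ rather than $(\partial^2_{z\bar z}\Phi_1)$; the paper's own proof silently works with $\Phi_1$, and your observation that the almost-analyticity estimates kill the three extraneous mixed second derivatives of $\Psi_1$ at the diagonal is the right justification. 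Second, for the final identification with $p_*(d\xi\wedge dx)$, the paper cites \cite[Proposition 6.2]{Vo14}, whereas you rederive it from scratch by computing the Jacobian determinant $-\Ima g'(x)$ of $(x,\xi)\mapsto(\Rea p,\Ima p)$ and summing $|J|^{-1}$ over the two preimages $\rho_\pm(z)$. Both are correct; your version is self-contained, which is a modest improvement.
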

 \begin{proof}
  Using that $x_{\pm}(t)$, with $t=\Ima z$, is the solution 
  to the equation $\Ima g(x_{\pm}(t)) = t$ with 
  \begin{equation*}
   \mp\Ima g'_x(x_{\pm}(t))<0
  \end{equation*}
  (cf \eqref{eqn:xpm}), we get that  
    \begin{equation*}
     x_{\pm}'(t) = \pm \frac{1}{ \Ima g'_x(x_{\pm}(t))} < 0.
    \end{equation*}
  Using Propositions \ref{prop:Sp_ee} and \ref{prop:Sp_ff}, 
  one then computes that 
  \begin{equation*}
   (\partial^2_{z\overline{z}}\Phi_1)(z;h) 
   + (\partial^2_{z\overline{z}}\Phi_2)(z;h)
   =  
   \frac{1}{4} \left( 
   \frac{1}{\Ima g'_x(x_{-}(\Ima z))}
   -
   \frac{1}{\Ima g'_x(x_{+}(\Ima z))}
   \right)  + \mO(h).
  \end{equation*}
  Since $-\frac{1}{2i}\{p,\overline{p}\}(\rho_{\pm}) 
  = \Ima g'_x(x_{\pm})$, we conclude by Proposition 6.2 in 
  \cite{Vo14} that 
  \begin{equation*}
   \left[\partial^2_{z\overline{z}}\Phi_1)(z;h) 
   + (\partial^2_{z\overline{z}}\Phi_2)(z;h)\right] L(dz)
   =  
    \frac{1}{4} p_*(d\xi\wedge dx) + \mO(h)L(dz).  \qedhere
  \end{equation*}
 \end{proof}
 \begin{proof}[of Proposition \ref{prop:SP_XX}]
  The results follow immediately from \eqref{eqn:SP_XX} and 
  the Propositions \ref{prop:Sp_ee}, \ref{prop:Sp_ff} and 
  \ref{prop:Hess_Phas_SymplVol}.
 \end{proof}
\section{Gramian matrix}
\label{susec:Gramian}
 The aim of this section is to study the Gramian matrix 
 $G$ which is defined in \eqref{eqn:G_44} via the blocks $A$, $B$, and $C$,  
 given in \eqref{eqn:ABC}. This will be essential to 
 the proof of Proposition \ref{prop:2ptCorrelation}. Most of 
 the results obtained here follow from involved but 
 straightforward calculations which use strongly 
 Proposition \ref{prop:SP_XX}, the principal 
 result of the previous section. 
 \par
 This section is organized as follows: in Section \ref{suse:GM1} we 
 discuss the invertibility of the matrix $A$ and provide estimates for 
 its determinant. In Section \ref{suse:GM2} we obtain detailed formulas for 
 $\Gamma$, which is given by the Shur complement 
 formula applied to $G$ (cf. \eqref{eqn:G_44}, \eqref{eq_a2}), i.e. 
 \begin{equation}\label{eq:Gam}
   \Gamma=C-B^*A^{-1}B.
 \end{equation}
 In Section \ref{suse:GM3} we will discuss the invertibility of the matrix 
 $G$ and in Section \ref{suse:GM4} we will state a formula for the permanent 
 of $\Gamma$ which is an essential quantity of Proposition 
 \ref{prop:2ptCorrelation}. 
 \subsection{The matrix $A$}\label{suse:GM1}
  We begin by studying the determinant of $A$, cf. \eqref{eqn:ABC}. It 
  is non-zero if and only if the vectors $X(z)$ and 
  $X(w)$ (given in Definition \ref{def:X}) are not co-linear. In particular we are 
  interested in a lower bound of this determinant 
  for $z$ and $w$ close. 
 \begin{prop}
 \label{prop:detA}
  Let $\Omega\Subset\Sigma$ be as in Hypothesis \ref{Hyp:H6} and let 
  $A$ be as in \eqref{eqn:ABC}. 
  For $z,w\in\Omega$ with $|z-w| \leq 1/C$, with $C>1$ large 
  enough (cf. Proposition \ref{prop:SP_XX}), we have
         \begin{align*}
	 \label{eqn:DetA}
	   \det A(z,w) &=  1 - 
	    \e^{-\frac{2K(z,w)}{h}}
	    +\mO_{\mathcal{C}^{\infty}}\!\left(h^{\infty}\right),
	 \end{align*}
  where $K(z,w)$ is as in \eqref{eqn:K}. Moreover,	 
 \begin{itemize}
  \item for $|z-w| \gg \sqrt{h\ln h^{-1}}$
	 \begin{align*}
	   \det A(z,w) =  1 
	    +\mO\!\left(h^{C}\right), ~ C\gg 1;
	 \end{align*}
  \item for $|z-w|\geq \frac{1}{\mO(1)} \sqrt{h}$
	 \begin{align*}
	   \det A \geq \frac{1}{\mO(1)};
	 \end{align*}
   \item let $N>1$ and let $C>1$ be large enough, then for 
   $\frac{1}{C} h^N \leq |z-w| \leq \frac{1}{C}\sqrt{h}$, 
	 \begin{align*}
	  \det A(z,w)  &= \frac{|z-w|^2}{2h} 
	  \left(\sigma\left(\frac{z+w}{2}\right) + 
	  \mO(h) + \mO(|z-w|)
	  + \mO\left(\frac{|z-w|^2}{h}\right)\right)
	  \notag \\
 	 & \phantom{=}+\mO_{\mathcal{C}^{\infty}}\!\left(h^{\infty}\right)
 	 \notag \\
	  &\geq \frac{h^{2N-1}}{\mO(1)}.
 	\end{align*}
 \end{itemize}
 \end{prop}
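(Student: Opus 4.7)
The starting point is the identity $\det A = \lVert X(z)\rVert^2\lVert X(w)\rVert^2 - |(X(z)|X(w))|^2$, which combined with the second line of Corollary \ref{cor:DetPer_A} (taken with the minus sign) yields immediately
\begin{equation*}
\det A(z,w) = 1 - \e^{-\frac{2K(z,w)}{h}} + \mO_{\mathcal{C}^\infty}(h^\infty),
\end{equation*}
establishing the first displayed formula. The remaining three bullets are then a matter of analyzing this expression in the corresponding ranges of $|z-w|$, using the asymptotics
\begin{equation*}
K(z,w) = \Bigl(\sigma\Bigl(\tfrac{z+w}{2}\Bigr)+\mO(h)\Bigr)\frac{|z-w|^2}{4}\bigl(1+\mO(|z-w|+h^\infty)\bigr)
\end{equation*}
supplied by \eqref{eqn:K} and Proposition \ref{prop:SP_XX}.

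For $|z-w| \gg \sqrt{h\ln h^{-1}}$ I would observe that $2K/h \gg \ln h^{-1}$, whence $\e^{-2K/h} = \mO(h^C)$ for any $C\gg 1$, giving the second bullet. For $|z-w| \geq \sqrt{h}/\mO(1)$ we have $2K/h \geq 1/\mO(1)$ uniformly in $h$, and therefore $1 - \e^{-2K/h}$ is bounded below by a positive constant, which is the third bullet.

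The small-distance regime $\tfrac{1}{C}h^N \leq |z-w| \leq \tfrac{1}{C}\sqrt{h}$ is the only one requiring an honest computation. Here $2K/h = \mO(1/C^2)$ is small, so Taylor expanding the exponential gives
\begin{equation*}
1 - \e^{-\frac{2K}{h}} = \frac{2K}{h} + \mO\!\left((K/h)^2\right),
\end{equation*}
and substituting the explicit form of $K$ produces the advertised leading term $\frac{|z-w|^2}{2h}\bigl(\sigma(\tfrac{z+w}{2})+\mO(h)+\mO(|z-w|)\bigr)$, together with the $\frac{|z-w|^2}{2h}\cdot\mO(|z-w|^2/h)$ correction coming from the quadratic part of the expansion. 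The lower bound $\det A \geq h^{2N-1}/\mO(1)$ is then immediate from the positivity of $\sigma$ on $\overline{\Omega}$ (a consequence of $(H.1)$ and \eqref{Hyp:Omega}) together with the hypothesis $|z-w|\geq h^N/C$; the same lower bound shows that the $\mO(h^\infty)$ remainder from Corollary \ref{cor:DetPer_A} is harmlessly absorbed.

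No serious obstacle is anticipated: all the genuine analytic content has already been invested in Section \ref{sec:StPh} to produce Proposition \ref{prop:SP_XX}, and the present statement is essentially an arithmetic consequence of Corollary \ref{cor:DetPer_A}. The only minor point of care is bookkeeping the error factors in the Taylor expansion so that the correction $(|z-w|^2/h)\cdot\mO(K/h)$ is correctly folded into the final $\mO(|z-w|^2/h)$ term, but this is routine.
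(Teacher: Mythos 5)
Your proof is correct and follows exactly the same route as the paper's: express $\det A$ via Corollary \ref{cor:DetPer_A} (second item, minus sign) to obtain $1-\e^{-2K/h}+\mO(h^\infty)$, then treat the three regimes by the obvious monotonicity/Taylor arguments for $K/h$. The only difference is that you spell out the two "immediate" bullets, which the paper leaves to the reader.
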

\begin{proof}
 By Corollary \ref{cor:DetPer_A} and \eqref{eqn:K}, one has that 
 \begin{equation*}
  \det A(z,w) = 1 - 
	    \e^{-\frac{2K(z,w)}{h}}
	    +\mO_{\mathcal{C}^{\infty}}\!\left(h^{\infty}\right), 
 \end{equation*}
 with 
 \begin{align*}
   K(z,w) = \left(\sigma\left(\frac{z+w}{2}\right)  +\mO(h)\right)
         \frac{|z-w|^2}{4}(1 + \mO(|z-w|+h^{\infty})).
  \end{align*}
 The first two estimates are then an immediate consequence of 
 the above formula.
 In the case where $|z-w| \leq \frac{1}{C}\sqrt{h}$, 
 one computes, using Taylor's formula, that
 \begin{equation*}
  \e^{-\frac{2K(z,w)}{h}} = 1 - \frac{|z-w|^2}{2h} 
  \left(\sigma\left(\frac{z+w}{2}\right) + 
  \mO(h) + \mO(|z-w|)
  + \mO\left(\frac{|z-w|^2}{h}\right)\right),
 \end{equation*}
 which implies that 
 \begin{equation*}
 \begin{split}
  \det A(z,w)  &= \frac{|z-w|^2}{2h} 
  \left(\sigma\left(\frac{z+w}{2}\right) + 
  \mO(h) + \mO(|z-w|)
  + \mO\left(\frac{|z-w|^2}{h}\right)\right)
 +\mO_{\mathcal{C}^{\infty}}\!\left(h^{\infty}\right)
  \\
  &\geq \frac{h^{2N-1}}{\mO(1)}. \qedhere
  \end{split}
 \end{equation*}
\end{proof}
 Since the matrix $A$ is self-adjoint, we have a lower bound on the matrix 
 norm of $A$ by its smallest eigenvalue. Using Proposition \ref{prop:SP_XX} we 
 see that $\tr A = 2 + \mO(h^{\infty})$ and one calculates that for a fixed 
 $N>1$ and for $|z-w|\geq \frac{ h^{N}}{\mO(1)}$ the two eigenvalues of $A$ 
 are given by 
 \begin{equation*}
   \lambda_{1,2}(z,w;h) = 1 \pm \e^{-\frac{K(z,w)}{h}} +\mO(h^{\infty}).
 \end{equation*}
 By Taylor expansion we conclude the following result:
\begin{cor}
\label{cor:LB_MN_A}
 Under the assumptions of Proposition \ref{prop:detA}, we have 
 that for $N\geq 1$ and $|z-w|\geq \frac{ h^{N}}{\mO(1)}$ 
 \begin{equation*}
   \min\limits_{\lambda \in\sigma(A)} \lambda
   \geq \frac{h^{2N-1}}{\mO(1)}.
 \end{equation*}
\end{cor}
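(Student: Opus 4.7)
The plan is to exploit the $2\times 2$ Hermitian structure of the Gram matrix $A$: since $A$ is $2\times 2$ self-adjoint, its spectrum is completely determined by its diagonal entries and $|A_{12}|$, and one can simply diagonalise. By Corollary \ref{cor:DetPer_A}, the diagonal entries satisfy $A_{11} = \lVert X(z)\rVert^2 = 1 + \mO(h^\infty)$ and $A_{22} = \lVert X(w)\rVert^2 = 1 + \mO(h^\infty)$, so $\tr A = 2 + \mO(h^\infty)$, while $|A_{12}|^2 = e^{-2K(z,w)/h} + \mO(h^\infty)$ with $K$ as in \eqref{eqn:K}. Inserting these into the elementary diagonalisation formula
\begin{equation*}
\lambda_{1,2}(A) = \frac{A_{11}+A_{22}}{2} \pm \sqrt{\left(\frac{A_{11}-A_{22}}{2}\right)^2 + |A_{12}|^2}
\end{equation*}
reproduces the expansion $\lambda_{1,2}(A) = 1 \pm e^{-K/h} + \mO(h^\infty)$ announced in the remark preceding the corollary.

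The smallest eigenvalue is therefore $\lambda_1 = 1 - e^{-K/h} + \mO(h^\infty)$, and it remains to bound this quantity below. By \eqref{eqn:K} and the continuity and positivity of the density $\sigma$ on the compact set $\overline{\Omega}$, one has $K(z,w) \asymp |z-w|^2$ for $|z-w|$ sufficiently small, so the hypothesis $|z-w| \geq h^N/\mO(1)$ yields a quantitative lower bound on $K/h$. A Taylor expansion of $1 - e^{-K/h}$, split into the regime where $K/h$ is of order one or larger (in which case $1 - e^{-K/h}$ is bounded below by $1 - e^{-1}$) and the small-$K/h$ regime (where $1 - e^{-K/h} \geq K/(2h)$), then yields the required lower bound, completing the proof.

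The main obstacle is careful bookkeeping in the small-$K/h$ regime: one must track how the quadratic behaviour $K(z,w) \asymp \sigma(\tfrac{z+w}{2})|z-w|^2/4$ interacts with the hypothesis $|z-w| \geq h^N/\mO(1)$ to yield precisely the exponent $N - \tfrac{1}{2}$ in the final bound, while verifying that the principal contribution dominates the $\mO(h^\infty)$ remainder carried over from the eigenvalue formula.
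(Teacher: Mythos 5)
Your approach matches the paper's: diagonalize the $2\times 2$ Hermitian matrix $A$ using the entries supplied by Corollary~\ref{cor:DetPer_A} to obtain $\lambda_{1,2} = 1 \pm \e^{-K/h} + \mO(h^\infty)$, and then bound $\lambda_{\min} = 1 - \e^{-K/h} + \mO(h^\infty)$ from below by a Taylor expansion split according to the size of $K/h$. This is exactly the argument sketched in the paper just before the corollary is stated.

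There is, however, a genuine gap in the final step, which you explicitly leave unfinished (``careful bookkeeping'') and merely assert will yield the exponent $N - \tfrac{1}{2}$. Carrying it out does not give that exponent. In the small-$K/h$ regime one has $1 - \e^{-K/h} \geq K/(2h)$, and by \eqref{eqn:K} one has $K(z,w) \asymp |z-w|^2$; combined with the hypothesis $|z-w| \geq h^N/\mO(1)$, this yields
\begin{equation*}
  \lambda_{\min} \geq \frac{K(z,w)}{2h} + \mO(h^\infty) \geq \frac{h^{2N-1}}{\mO(1)},
\end{equation*}
i.e.\ exponent $2N-1$, not $N-\tfrac{1}{2}$. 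For $N \geq 1$ one has $2N - 1 > N - \tfrac{1}{2}$, so the bound your argument actually produces is strictly weaker than the one claimed by the corollary. The exponent $2N-1$ is also what Proposition~\ref{prop:detA} forces via $\lambda_{\min} = \det A/\lambda_{\max}$ and $\lambda_{\max} \leq 2 + \mO(h^\infty)$. Thus either the stated exponent $N-\tfrac{1}{2}$ is a misprint for $2N-1$ (in which case your completed argument proves the corrected statement), or some additional ingredient beyond the sketched Taylor expansion is required. Either way, the exponent cannot simply be asserted: when the bookkeeping you postpone is actually done, it does not produce $N-\tfrac{1}{2}$, and this needs to be confronted rather than deferred.
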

\subsection{The matrix $\Gamma$}\label{suse:GM2}
The principal aim of this section is to prove a precise formula for the matrix $\Gamma$, 
see Proposition \ref{prop:mat_Gamma} below, and to give formulas for its determinant, 
permanent and trace, see Corollary \ref{cor:det_Gamma} below. 
\par 
We begin by considering a very helpful congruency transformation. 
In view of Proposition \ref{prop:SP_XX}, we prove 
\begin{lem}
 \label{lem:conj}
  Let $\Omega\Subset\Sigma$ be as in \eqref{eq_i35}, and let 
  $\Delta_{\Omega}(C)$, $\Phi(z;h)$ and $\Psi(z,w;h)$ be as in Proposition 
  \ref{prop:SP_XX}, for $(z,w)\in \Delta_{\Omega}(C)$. Let $\Gamma$ be as in 
 \eqref{eq:Gam}. Define the matrices
  \begin{equation*}
   \widetilde{A}:= 
   \begin{pmatrix}
    \e^{\frac{2}{h}\Psi(z,z;h)} 
	        &
    \e^{\frac{2}{h}\Psi(z,w;h)} \\ 
    \e^{\frac{2}{h}\Psi(w,z;h)} 
	        &
    \e^{\frac{2}{h}\Psi(w,w;h)} \\ 	        
  \end{pmatrix}
  \quad 
  \text{and}
  \quad  
   \Lambda : = \begin{pmatrix}
          \e^{-\frac{1}{h}\Phi(z;h)} & 
          0 \\ 
          0 & 
          \e^{-\frac{1}{h}\Phi(w;h)} \\ 
          \end{pmatrix},
  \end{equation*}
  \begin{equation*}
   \widetilde{B}:= 
   2h^{-1}\begin{pmatrix}
    \Psi'_{\overline{w}}(z,z;h)\e^{\frac{2}{h}\Psi(z,z;h)} 
	        &
    \Psi'_{\overline{w}}(z,w;h)\e^{\frac{2}{h}\Psi(z,w;h)} \\ 
    \Psi'_{\overline{w}}(w,z;h)\e^{\frac{2}{h}\Psi(w,z;h)} 
	        &
    \Psi'_{\overline{w}}(w,w;h)\e^{\frac{2}{h}\Psi(w,w;h)} \\ 	        
  \end{pmatrix}
  \end{equation*}
  and 
  \begin{equation*}
   \widetilde{C}:= 
   h^{-2}\begin{pmatrix}
    c(z,z;h)\e^{\frac{2}{h}\Psi(z,z;h)} 
	        &
    c(z,w;h)\e^{\frac{2}{h}\Psi(z,w;h)} \\ 
    c(w,z;h)\e^{\frac{2}{h}\Psi(w,z;h)} 
	        &
    c(w,w;h)\e^{\frac{2}{h}\Psi(w,w;h)} \\ 	        
  \end{pmatrix}
  \end{equation*}
  with $c(z,w;h) := 4\Psi'_{z}(z,w;h)\Psi'_{\overline{w}}(z,w;h) 
  + 2 h \Psi''_{z\overline{w}}(z,w;h)$. Then, we have for 
  $|z-w| \geq h^{N}/\mO(1)$ that 
  \begin{equation*}
   \Gamma = \Lambda (\widetilde{C} -
    \widetilde{B}^*\widetilde{A}^{-1} \widetilde{B})\Lambda
   +\mO_{\mathcal{C}^{\infty}}\!\left(h^{\infty}\right).
  \end{equation*} 
 \end{lem}
\begin{proof}
To abbreviate the notation, we define for $(z,w)\in D_{\Omega}(C)$ 
the following function
 \begin{equation*}
  F(z,w) :=  \e^{-\frac{1}{h}\Phi(z;h)}
 \e^{-\frac{1}{h}\Phi(w;h)}
 \e^{\frac{2}{h}\Psi(z,w;h)}.
 \end{equation*}
By Proposition \ref{prop:SP_XX}, we see that $F$ is bounded by $1$ and 
that all its derivatives are bounded polynomially in $h^{-1}$. Furthermore, 
the matrices $A,B$ and $C$ are given by 
\begin{align*}
  &A(z,w) = A_0(z,w) + 
        \mO_{\mathcal{C}^{\infty}}\!\left(h^{\infty}\right), 
        \notag\\
  &B(z,w) = B_0(z,w) + 
        \mO_{\mathcal{C}^{\infty}}\!\left(h^{\infty}\right), 
        \notag\\
   &C(z,w) = C_0(z,w) + 
        \mO_{\mathcal{C}^{\infty}}\!\left(h^{\infty}\right), 
\end{align*}
where $(z,w)\in D_{\Omega}(C)$ and 
\begin{align*}
 A_0(z,w) = 
 \begin{pmatrix}
  F(z,z)
 &
 F(z,w)
 \\
 F(w,z)
 &
 F(w,w)
 \\
 \end{pmatrix},
\end{align*}
and 
\begin{align*}
 B_0(z,w) = 
 \begin{pmatrix}
  (\partial_{\overline{w}}F)(z,z)
  &
 (\partial_{\overline{w}}F)(z,w)
 \\
 (\partial_{\overline{w}}F)(w,z)
 &
 (\partial_{\overline{w}}F)(w,w)
 \\
 \end{pmatrix},
\end{align*}
and 
\begin{align*}
 C_0(z,w) = 
 \begin{pmatrix}
  (\partial_{z\overline{w}}^2F)(z,z)
  &
 (\partial_{z\overline{w}}^2F)(z,w)
 \\
 (\partial_{z\overline{w}}^2F)(w,z)
 &
 (\partial_{z\overline{w}}^2F)(w,w)
 \\
 \end{pmatrix}.
\end{align*}
One computes that 
\begin{align*}
  (\partial_{\overline{w}}F)(z,w)
  &= \frac{1}{h}\left[2(\partial_{\overline{w}}\Psi)(z,w;h)
  -(\partial_{\overline{w}})\Phi(w;h)\right]\e^{-\frac{1}{h}\Phi(z;h)-\frac{1}{h}\Phi(w;h)}
	        \e^{\frac{2}{h}\Psi(z,w)}
    \notag \\
    &\phantom{=} 
      +\mO_{\mathcal{C}^{\infty}}\!\left(h^{\infty}\right),  
 \end{align*}
and that 
\begin{align*}
 (\partial_{z\overline{w}}^2 &F)(z,w) \notag \\
  &= \frac{1}{h^2}\Big[
   \left[2(\partial_{z}\Psi)(z,w;h)
  -(\partial_{z}\Phi)(z;h)\right]
  \left[2(\partial_{\overline{w}}\Psi)(z,w;h)
  -(\partial_{\overline{w}}\Phi)(w;h)\right]
  +
  \notag \\
  &\phantom{= \frac{1}{h^2}-}
  2h(\partial_{z\overline{w}}^2\Psi)(z,w;h)
    \Big]\e^{-\frac{1}{h}\Phi(z_1;h)-\frac{1}{h}\Phi(z_2;h)}
    \e^{\frac{2}{h}\Psi(z_1,z_2)}    
      +\mO_{\mathcal{C}^{\infty}}\!\left(h^{\infty}\right).  
 \end{align*}
Using that $ \det A_0 =  \det A + \mO(h^{\infty})$ and that 
$\det A \geq h^{2N-1}/\mO(1)$ for $|z-w| \geq h^{N}/\mO(1)$ 
(cf. Proposition \ref{prop:detA}), we see that 
\begin{equation*}
 \Gamma = C_0 - B_0^* A_0^{-1} B_0
 +\mO\!\left(h^{\infty}\right).
\end{equation*}
Defining, 
\begin{equation*}
\Lambda' : = \begin{pmatrix}
          \partial_z\e^{-\frac{1}{h}\Phi(z;h)} & 
          0 \\ 
          0 & 
          \partial_w\e^{-\frac{1}{h}\Phi(w;h)} \\ 
          \end{pmatrix}
  \end{equation*}
we see that 
\begin{align*}
 &A_0 = \Lambda \widetilde{A}\Lambda ,
 \notag \\
 &B_0 = \Lambda (\widetilde{B})\Lambda + \Lambda \widetilde{A}(\Lambda')
  +\mO_{\mathcal{C}^{\infty}}\!\left(h^{\infty}\right),
 \notag \\
 & C_0 = \Lambda (\widetilde{C})\Lambda + \Lambda (\widetilde{B}^*) (\Lambda')
  + \Lambda' (\widetilde{B}) \Lambda+
  \Lambda' \widetilde{A}(\Lambda')
  +\mO_{\mathcal{C}^{\infty}}\!\left(h^{\infty}\right).
\end{align*}
A direct computation then yields that 
\begin{equation*}
 \Gamma = \Lambda (\widetilde{C} 
    - \widetilde{B}^*\widetilde{A}^{-1} \widetilde{B})\Lambda
 +\mO_{\mathcal{C}^{\infty}}\!\left((\det A)^{-1}h^{\infty}\right). \qedhere
\end{equation*}
\end{proof}
\begin{prop}
\label{prop:mat_Gamma}
 Let $\Omega\Subset\Sigma$ be as in \eqref{eq_i35}, and let 
 $\Delta_{\Omega}(C)$ and $\Psi(z,w;h)$, for $(z,w)\in \Delta_{\Omega}(C)$,  
 be as in Proposition \ref{prop:SP_XX}. Let $\Gamma$ be as in 
 \eqref{eq:Gam}. For $(z,w)\in D_{\Omega}(C)$ 
 let $K(z,w)$ be as in \eqref{eqn:K} and define
 \begin{align*}
  &a_1:=a_1(z,w;h):= (\partial_{z}\Psi)(z,z;h) - (\partial_{z}\Psi)(z,w;h),
  \\
  &a_2:=a_2(z,w;h):= -a_1(w,z;h).
 \end{align*}
 Then, for $N>1$ and $ \frac{1}{C} h^N \leq |z-w|$, with $C>1$ large enough, 
 we have that 
\begin{align*}
 \Gamma =& 
 \frac{-4}{h^2\left(1 - \e^{-\frac{2}{h}K(z,w)}\right)}
 \begin{pmatrix}
 a_1\overline{a}_1 
 \e^{- \frac{2}{h}K(z,w)}
 &
 a_1\overline{a}_2 
 \e^{\frac{1}{h}( 2i\Ima\Psi(z,w)- K(z,w))}
 \\
 a_2\overline{a}_1 
 \e^{\frac{1}{h}( -2i\Ima\Psi(z,w)- K(z,w))}
 &
 a_2\overline{a}_2 
  \e^{-\frac{2}{h}K(z,w)}
  \\
 \end{pmatrix}
 \notag \\
 &+
 \frac{2}{h}
 \begin{pmatrix}
    \Psi''_{z\overline{w}}(z,z;h)
	        &
    \Psi''_{z\overline{w}}(z,w;h)\e^{\frac{1}{h}(2i\Ima\Psi(z,w)- K(z,w))} \\ 
    \Psi''_{z\overline{w}}(w,z;h)\e^{\frac{1}{h}(-2i\Ima\Psi(z,w)- K(z,w))} 
	        &
    \Psi''_{z\overline{w}}(w,w;h) \\ 	        
  \end{pmatrix}
  \notag \\
 & + \mO(h^{\infty}).
\end{align*}
\end{prop}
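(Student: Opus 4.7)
The plan is to substitute the explicit formula from Proposition~\ref{prop:SP_XX} into every entry of $G$ and perform the Schur complement computation directly. Write $F(u,v):=(X(u)|X(v))=e^{L(u,v)/h}+\mO(h^\infty)$ with $L(u,v):=2\Psi(u,v)-\Phi(u)-\Phi(v)$. Then $A_{ij}=F(z_i,z_j)$ (with $z_1=z$, $z_2=w$), while the entries of $B$ and $C$ are the first and mixed second derivatives of $F$ evaluated at the four pairs $(z_i,z_j)$. By the chain rule,
\[
\partial_\zeta\partial_{\bar\eta}F=\bigl(h^{-1}L_{\zeta\bar\eta}+h^{-2}L_\zeta L_{\bar\eta}\bigr)F,
\]
so $C$ carries an a priori dangerous $h^{-2}$ growth that must cancel against a similar growth coming from $B^*A^{-1}B$.

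The first step is to compute the derivatives of $L$ explicitly at the four relevant pairs. Combining the relation $\Psi(z,z)=\Phi(z)$ with $\partial_w\Psi=\mO(|z-w|^\infty)$ from Proposition~\ref{prop:SP_XX} and the symmetry $\Psi_1(w,z)=\overline{\Psi_{\bar 2}(z,w)}$ yields $\partial_z\Psi(z,z)=\Phi'(z)=:b$, together with the key identities
\[
L_1(z,w)=b-2a_1,\quad L_{\bar 2}(z,w)=\overline{\Phi'(w)}+2\overline{a_2},\quad L_{1\bar 2}(z,w)=2\Psi''_{z\bar w}(z,w),
\]
and their analogues at $(z,z)$, $(w,z)$, $(w,w)$. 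All the entries of $A$, $B$, $C$ can then be read off.

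With $A=\bigl(\begin{smallmatrix}1&e^{L/h}\\ e^{\bar L/h}&1\end{smallmatrix}\bigr)+\mO(h^\infty)$ and $\det A=1-e^{-2K/h}+\mO(h^\infty)$ from Proposition~\ref{prop:detA}, the inverse is immediate and a routine $2\times2$ matrix multiplication produces $B^*A^{-1}B$. The crux of the argument is the cancellation of the $h^{-2}$ \emph{connection terms}: for the diagonal entry one computes, after the cross-terms in the $(1,1)$-product cancel in pairs,
\[
(B^*A^{-1}B)_{11}=h^{-2}|b|^2+\tfrac{4|a_1|^2\,e^{-2K/h}}{h^2(1-e^{-2K/h})},
\]
and the $h^{-2}|b|^2$ piece cancels exactly the $h^{-2}L_1(z,z)L_{\bar 2}(z,z)=h^{-2}|b|^2$ piece of $C_{11}=h^{-1}\cdot 2\Psi''_{z\bar w}(z,z)+h^{-2}|b|^2$. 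The same mechanism, with the composite $(b-2a_1)\overline{\Phi'(w)}+2b\overline{a_2}$ cancelling against itself, occurs in the off-diagonal entries; only the $a_1\overline{a_2}$ cross-term and the $\Psi''_{z\bar w}$ contribution survive, the phase prefactors $e^{L/h}=e^{(2i\Ima\Psi-K)/h}$ (and its conjugate) being produced by $A^{-1}$. Rearranging yields exactly the two-matrix form stated in the Proposition.

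Finally, I would propagate the $\mO(h^\infty)$ errors. Since Proposition~\ref{prop:detA} gives $\det A\gtrsim h^{2N-1}$ on $|z-w|\geq h^N/C$, the inversion of $A$ introduces only an $\mO(h^\infty)$ error, and the resulting error in $\Gamma$ remains $\mO(h^\infty)$. The main obstacle is not any single estimate but rather the bookkeeping that exhibits the $h^{-2}$ cancellations. The underlying conceptual reason they must occur is that $\Gamma$ is the Gram matrix of $\partial_zX(z)$ and $\partial_wX(w)$ after orthogonalization against $\mathrm{span}\{X(z),X(w)\}$; the $h^{-1}$-large longitudinal components along $X(z),X(w)$ are removed by construction, and what remains is the transverse content, governed entirely by $a_1, a_2$ and $\Psi''_{z\bar w}$.
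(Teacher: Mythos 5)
Your proposal is correct and follows essentially the same route as the paper: both substitute the formula $(X(u)|X(v))=e^{(2\Psi(u,v)-\Phi(u)-\Phi(v))/h}+\mO(h^\infty)$ from Proposition~\ref{prop:SP_XX} into the entries of $G$ and compute the $2\times2$ Schur complement $\Gamma=C-B^*A^{-1}B$, tracking the cancellation of the $h^{-2}$ terms (your explicit $(B^*A^{-1}B)_{11}=h^{-2}|b|^2+4|a_1|^2e^{-2K/h}/(h^2(1-e^{-2K/h}))$ and the corresponding $C_{11}$ match what one gets from the paper's Lemma~\ref{lem:conj}). The only organizational difference is that the paper first conjugates by the diagonal matrix $\Lambda=\mathrm{diag}(e^{-\Phi(z)/h},e^{-\Phi(w)/h})$ to strip the $\Phi$-exponentials before forming the Schur complement, whereas you carry them through directly; this is pure bookkeeping, and your Gram-matrix-after-orthogonalization remark correctly identifies the structural reason the $h^{-2}$ longitudinal terms must drop out.
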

Before we give the proof of this result, we state formulae for the trace, 
the determinant and the permanent of $\Gamma$.
\begin{cor}
\label{cor:det_Gamma}
 Under the assumptions of Proposition \ref{prop:mat_Gamma}, we have that 
 \begin{align*}
 \mathrm{tr}\,\Gamma = 
 \frac{2}{h\left(\e^{\frac{2}{h}K(z,w)}-1\right)}
 \Big[&
 \Big(\Psi''_{z\overline{w}}(z,z;h) + \Psi''_{z\overline{w}}(w,w;h)
 +\mO(h^{\infty})\Big)
 \left(\e^{\frac{2}{h}K(z,w)}-1\right)
 \notag \\
 &-
 2h^{-1}(|a_1|^2 + |a_2|^2)
 \Big],
\end{align*}
\begin{align*}
 \det&\Gamma = 
 -\frac{16}{h^4\left(1 - \e^{-\frac{2}{h}K(z,w)}\right)}\e^{-\frac{2}{h} K(z,w)}
 \Big[|a_{1}a_{2}|^2
 +\frac{h}{2}
 \big(
 |a_{1}|^2
 (\partial_{z\overline{w}}^2\Psi)(w,w;h)
 \notag \\
 & \phantom{---------}
 -
 2\Rea\left\{
 (\partial_{z\overline{w}}^2\Psi)(w,z;h)
 a_1\overline{a}_2
 \right\}
 +|a_{2}|^2
 (\partial_{z\overline{w}}^2\Psi)(z,z;h)
 \big)\Big]
 \notag \\
  &+
 \frac{4}{h^2}
 \left(
 (\partial_{z\overline{w}}^2\Psi)(z,z;h)
 (\partial_{z\overline{w}}^2\Psi)(w,w;h)
 -
 (\partial_{z\overline{w}}^2\Psi)(z,w;h)
 (\partial_{z\overline{w}}^2\Psi)(w,z;h)
 \e^{-\frac{2}{h}K(z,w)}
 \right)
 \notag \\
 & + \mO(h^{\infty})
\end{align*}
and that 
\begin{align*}
 &\perm\Gamma = 
 \frac{16}{h^4\left(1 - \e^{-\frac{2}{h}K(z,w)}\right)^2}\e^{-\frac{2}{h} K(z,w)}
  |a_{1}a_{2}|^2\left(1+\e^{-\frac{2}{h} K(z,w)}\right)
  \notag \\
 &\phantom{per}-
 \frac{8}{h^3\left(1 - \e^{-\frac{2}{h}K(z,w)}\right)}\e^{-\frac{2}{h} K(z,w)}
 \big(
 |a_{1}|^2
 (\partial_{z\overline{w}}^2\Psi)(w,w;h)
 \notag \\
 & \phantom{-----------------}
 +
 2\Rea\left\{
 (\partial_{z\overline{w}}^2\Psi)(w,z;h)
 a_1\overline{a}_2
 \right\}
 +|a_{2}|^2
 (\partial_{z\overline{w}}^2\Psi)(z,z;h)
 \big)
 \notag \\
  &\phantom{per}+
 \frac{4}{h^2}
 \left(
 (\partial_{z\overline{w}}^2\Psi)(z,z;h)
 (\partial_{z\overline{w}}^2\Psi)(w,w;h)
 +
 (\partial_{z\overline{w}}^2\Psi)(z,w;h)
 (\partial_{z\overline{w}}^2\Psi)(w,z;h)
 \e^{-\frac{2}{h}K(z,w)}
 \right)
 \notag \\
 &\phantom{per} + \mO(h^{\infty}).
\end{align*}
\end{cor}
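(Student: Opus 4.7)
The plan is to use the decomposition $\Gamma = M_1 + M_2 + \mO(h^{\infty})$ furnished by Proposition \ref{prop:mat_Gamma}, where $M_1$ is the first $2\times 2$ matrix (with prefactor $-4/(h^2(1-\e^{-2K/h}))$) and $M_2$ is the second (with prefactor $2/h$), and then apply the linear/bilinear operations $\tr$, $\det$, $\perm$ directly to this sum. Each of the three formulae in the corollary then reduces to a finite collection of explicit computations.

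The trace formula is the easiest: by linearity, $\tr\Gamma = \tr M_1 + \tr M_2 + \mO(h^{\infty})$. Reading off
$\tr M_1 = -4(|a_1|^2+|a_2|^2)\e^{-2K/h}/(h^2(1-\e^{-2K/h}))$ and $\tr M_2 = 2(\Psi''_{z\overline{w}}(z,z;h)+\Psi''_{z\overline{w}}(w,w;h))/h$, and putting these on the common denominator $h(\e^{2K/h}-1)$, produces exactly the stated formula.

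For the determinant and permanent I will use the standard $2\times 2$ bilinear identities
\[
 \det(M_1+M_2) = \det M_1 + \det M_2 + (M_1)_{11}(M_2)_{22} + (M_1)_{22}(M_2)_{11} - (M_1)_{12}(M_2)_{21} - (M_1)_{21}(M_2)_{12},
\]
\[
 \perm(M_1+M_2) = \perm M_1 + \perm M_2 + (M_1)_{11}(M_2)_{22} + (M_1)_{22}(M_2)_{11} + (M_1)_{12}(M_2)_{21} + (M_1)_{21}(M_2)_{12}.
\]
The key observation is that the imaginary phases $\e^{\pm 2i\Ima\Psi(z,w)/h}$ in the off-diagonal entries of $M_1$ always cancel when paired, yielding $\det M_1 = -16|a_1a_2|^2\e^{-2K/h}/(h^4(1-\e^{-2K/h}))$ and $\perm M_1 = 16|a_1a_2|^2\e^{-2K/h}(1+\e^{-2K/h})/(h^4(1-\e^{-2K/h})^2)$. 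The same cancellation occurs in the mixed off-diagonal products, producing
\[
 (M_1)_{12}(M_2)_{21} = -\frac{8\e^{-2K/h}}{h^3(1-\e^{-2K/h})}\, a_1\overline{a}_2\,\Psi''_{z\overline{w}}(w,z;h)
\]
and an analogous expression for $(M_1)_{21}(M_2)_{12}$. To collapse their sum to $2\Rea\{a_1\overline{a}_2\Psi''_{z\overline{w}}(w,z;h)\}$ I will invoke the symmetry $\Psi(z,w;h)=\overline{\Psi(w,z;h)}$ from Proposition \ref{prop:SP_XX}; differentiating this identity in the first slot and the conjugate of the second slot yields $\Psi''_{z\overline{w}}(z,w;h) = \overline{\Psi''_{z\overline{w}}(w,z;h)}$, which is exactly the identity needed. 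The diagonal-mixed terms $(M_1)_{ii}(M_2)_{jj}$ combine with the same prefactor to give the $|a_1|^2\Psi''_{z\overline{w}}(w,w;h) + |a_2|^2\Psi''_{z\overline{w}}(z,z;h)$ contribution, and one then recognises the bracket structure stated in the corollary.

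The error term $\mO(h^{\infty})$ is preserved throughout: on the range $h^{N} \lesssim |z-w|$ allowed in Proposition \ref{prop:mat_Gamma}, each entry of $M_1+M_2$ is bounded by a finite negative power of $h$, so multiplication by the $\mO_{\mathcal{C}^{\infty}}(h^{\infty})$ correction still yields $\mO(h^{\infty})$ in $\det$, $\perm$ and $\tr$. The whole argument is essentially a bookkeeping exercise, so the main---and only---obstacle is keeping track of the six off-diagonal exponential phases $\e^{(\pm 2i\Ima\Psi - K)/h}$ consistently, and applying the symmetry $\Psi''_{z\overline{w}}(z,w;h) = \overline{\Psi''_{z\overline{w}}(w,z;h)}$ at the right moment to produce the $\Rea\{\cdot\}$ combination; no new analytic input is required beyond Propositions \ref{prop:SP_XX} and \ref{prop:mat_Gamma}.
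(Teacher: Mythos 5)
Your proposal is correct and takes essentially the same approach as the paper, which simply states that the corollary ``follows from a direct computation using Proposition \ref{prop:mat_Gamma}''; your decomposition $\Gamma = M_1 + M_2 + \mO(h^{\infty})$, the $2\times 2$ bilinear identities for $\det$ and $\perm$, the cancellation of the imaginary phases $\e^{\pm 2i\Ima\Psi/h}$ in paired off-diagonal products, and the use of the Hermitian symmetry $\Psi''_{z\overline{w}}(z,w;h)=\overline{\Psi''_{z\overline{w}}(w,z;h)}$ to collapse the mixed terms into $2\Rea\{\cdot\}$ are exactly the bookkeeping the paper leaves implicit.
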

\begin{proof}
 The result follows from a direct computation using 
 Proposition \ref{prop:mat_Gamma}; for the definition of the permanent 
 of a matrix see \eqref{def:Per}.
\end{proof}
\begin{proof}[of Proposition \ref{prop:mat_Gamma}]
In view of Lemma \ref{lem:conj}, it remains to consider 
the matrix
\begin{equation*}
 \widetilde{\Gamma}:= 
\widetilde{C} - \widetilde{B}^* \widetilde{A}^{-1} \widetilde{B}.
\end{equation*}
In the sequel we will suppress the $h$-dependency of 
the function $\Psi$ to abbreviate our notation. Recall the definition 
of $\widetilde{A}$ from Lemma \ref{lem:conj} and note that   
\begin{align}
 \label{eqn:DetAtilde}
 \det \widetilde{A} &=\e^{\frac{2}{h}\Psi(z,z)} \e^{\frac{2}{h}\Psi(w,w)} 
 -\e^{\frac{4}{h}\Rea\Psi(z,w)}
 \notag \\
 &=\e^{\frac{2}{h}\Psi(z,z)} \e^{\frac{2}{h}\Psi(w,w)} 
 \left(1 - \e^{-\frac{2}{h}K(z,w)}\right).
\end{align}
For $\frac{1}{C} h^N \leq |z-w|$, Proposition \ref{prop:SP_XX} implies 
that $\det \widetilde{A}$ is positive. Hence, the inverse of $\widetilde{A}$ 
exists and is given by 
\begin{equation*}
 \widetilde{A}^{-1}:= \frac{1}{\det \widetilde{A}}
   \begin{pmatrix}
    \e^{\frac{2}{h}\Psi(w,w)} 
	        &
    -\e^{\frac{2}{h}\Psi(z,w)} \\ 
    -\e^{\frac{2}{h}\Psi(w,z)} 
	        &
    \e^{\frac{2}{h}\Psi(z,z)} \\ 	        
  \end{pmatrix}.
\end{equation*}
To calculate $\widetilde{B}^*$, we use Lemma 
\ref{lem:conj} and the symmetries of the function $\Psi(z,w)$ 
given in Proposition \ref{prop:SP_XX}. Indeed, one gets that 
\begin{equation*}
   \widetilde{B}^*:= 
   2h^{-1}\begin{pmatrix}
    \Psi'_{z}(z,z)\e^{\frac{2}{h}\Psi(z,z)} 
	        &
    \Psi'_{z}(z,w)\e^{\frac{2}{h}\Psi(z,w)} \\ 
    \Psi'_{z}(w,z)\e^{\frac{2}{h}\Psi(w,z)} 
	        &
    \Psi'_{z}(w,w)\e^{\frac{2}{h}\Psi(w,w)} \\ 	        
  \end{pmatrix}
  \end{equation*}
and one computes that $M:=
h\widetilde{B}^* \widetilde{A}^{-1} h\widetilde{B}$ 
is given by 
\begin{equation*}
  M = \frac{4}{\det \widetilde{A}} 
 \begin{pmatrix}
    M_{11} & M_{12}   \\ 
    M_{21} & M_{22}   \\ 	        
  \end{pmatrix}
\end{equation*}
with 
\begin{align*}
 M_{11} =& \Psi'_{z}(z,z)\Psi'_{\overline{w}}(z,z)
 \e^{\frac{1}{h}(4\Psi(z,z) + 2\Psi(w,w))} +
 \big[\Psi'_{z}(z,w)\Psi'_{\overline{w}}(w,z)
  \notag \\
 &-\Psi'_{z}(z,w)\Psi'_{\overline{w}}(z,z)
 -\Psi'_{z}(z,z)\Psi'_{\overline{w}}(w,z)\big]
 \e^{\frac{1}{h}( 2\Psi(z,z) + 4 \Rea \Psi(z,w))},
\end{align*}
\begin{align*}
 M_{12} =& -\Psi'_{z}(z,w)\Psi'_{\overline{w}}(z,w)
 \e^{\frac{1}{h}(4\Psi(z,w) + 2\Psi(w,z))} +
 \big[\Psi'_{z}(z,z)\Psi'_{\overline{w}}(z,w)
  \notag \\
 &+\Psi'_{z}(z,w)\Psi'_{\overline{w}}(w,w)
 -\Psi'_{z}(z,z)\Psi'_{\overline{w}}(w,w)\big]
 \e^{\frac{2}{h}( \Psi(z,z) + \Psi(z,w)+ \Psi(w,w))},
\end{align*}
and
\begin{align*}
 M_{22} =& \Psi'_{z}(w,w)\Psi'_{\overline{w}}(w,w)
 \e^{\frac{1}{h}(2\Psi(z,z) + 4 \Psi(w,w))} +
 \big[\Psi'_{z}(w,z)\Psi'_{\overline{w}}(z,w)
  \notag \\
 &-\Psi'_{z}(w,w)\Psi'_{\overline{w}}(z,w)
 -\Psi'_{z}(w,z)\Psi'_{\overline{w}}(w,w)\big]
 \e^{\frac{1}{h}( 2\Psi(w,w) + 4 \Rea \Psi(z,w))}.
\end{align*}
Since the matrix $M$ is clearly self-adjoint, one has 
that $M_{21} = \overline{M}_{12}$. Comparing the coefficients 
of $M$ with with those of $ h^2 (\det\widetilde{A}/4) \widetilde{C}$ 
(cf. Lemma \ref{lem:conj}) and using the symmetries of $\Psi$ (cf. 
Proposition \ref{prop:SP_XX}), we see that
\begin{align}
\label{eqn:Gam_01}
 h^2 \widetilde{\Gamma} =& 
 \frac{-4}{\det \widetilde{A}}
 \begin{pmatrix}
 a_1\overline{a}_1 
 \e^{\frac{1}{h}( 2\Psi(z,z) + 4 \Rea \Psi(z,w))}
 &
 a_1\overline{a}_2 
 \e^{\frac{2}{h}( \Psi(z,z) + \Psi(z,w)+ \Psi(w,w))}
 \\
 a_2\overline{a}_1 
 \e^{\frac{2}{h}( \Psi(z,z) + \Psi(w,z)+ \Psi(w,w))}
 &
 a_2\overline{a}_2 
  \e^{\frac{1}{h}( 2\Psi(w,w) + 4 \Rea \Psi(z,w))}
  \\
 \end{pmatrix}
 \notag \\
 &+
 2h
 \begin{pmatrix}
    \Psi''_{z\overline{w}}(z,z;h)\e^{\frac{2}{h}\Psi(z,z)} 
	        &
    \Psi''_{z\overline{w}}(z,w;h)\e^{\frac{2}{h}\Psi(z,w)} \\ 
    \Psi''_{z\overline{w}}(w,z;h)\e^{\frac{2}{h}\Psi(w,z)} 
	        &
    \Psi''_{z\overline{w}}(w,w;h)\e^{\frac{2}{h}\Psi(w,w)} \\ 	        
  \end{pmatrix}
\end{align}
with $a_{i}$ as in the hypothesis of Proposition \ref{prop:mat_Gamma}. 
Recall from \eqref{eqn:K} that the function $K(z,w)$ is defined by
\begin{align*}
   -K(z,w) = 2\Rea \Psi(z,w)-\Phi(z)- \Phi(w) 
\end{align*}
where $\Phi(z) = \Psi(z,z)$. Using \eqref{eqn:DetAtilde}, we find that the 
first matrix in \eqref{eqn:Gam_01} is equal to 
\begin{align*}
 \frac{-4}{1 - \e^{-\frac{2}{h}K(z,w)}}
 \begin{pmatrix}
 a_1\overline{a}_1 
 \e^{\frac{1}{h}(2\Psi(z,z) - 2K(z,w))}
 &
 a_1\overline{a}_2 
 \e^{\frac{2}{h}\Psi(z,w)}
 \\
 a_2\overline{a}_1 
 \e^{\frac{2}{h}\Psi(w,z)}
 &
 a_2\overline{a}_2 
  \e^{\frac{1}{h}( 2\Psi(w,w) -2K(z,w))}
  \\
 \end{pmatrix}.
\end{align*}
It follows by Lemma \ref{lem:conj} that 
\begin{align*}
 \Gamma &= \Lambda \widetilde{\Gamma}\Lambda^*
 +\mO_{\mathcal{C}^{\infty}}\!\left(h^{\infty}\right).
\end{align*}
In the last equality we used that $\det A$ is bounded from 
below by a power of $h$; see Lemma \ref{lem:conj}. Carrying out 
the matrix multiplication $\Lambda \widetilde{\Gamma}\Lambda^*$ 
implies the statement of the proposition.
\end{proof}
\subsection{The determinant of $G$}\label{suse:GM3} We show that 
the matrix $G(z,w)$ (cf. \eqref{eqn:G_44}) is invertible 
if $z$ and $w$ are outside a neighborhood of size of order $h^{3/5}$ 
of the diagonal $\{z=w\}$. More precisely, we prove the following result:
 \begin{prop}
 \label{prop:GramInvert}
  Let $\Omega\Subset\Sigma$ be as in \eqref{eq_i35} and 
  let $z,w\in\Omega$. Then, 
  \begin{equation*}
   \det G(z,w) > 0 
   \quad 
   \text{for}
   \quad 
    h^{\frac{3}{5}} \ll |z-w| \ll 1. 
  \end{equation*}
 \end{prop}
 \begin{proof}
The Shur complement formula yields that the determinant of the 
Gra\-mian matrix $G$ is given by 
$\det G = \det A \det \Gamma $. Hence, using Proposition 
\ref{prop:detA} and Corollary \ref{cor:det_Gamma}, we see that
\begin{align}
 \label{eqn:detG}
 \det G &= 
 -\frac{16\left(1 + \mO(h^{\infty})\right)}{h^4}
 \e^{-\frac{2}{h} K(z,w)}
 \Big[|a_{1}a_{2}|^2
 +\frac{h}{2}
 \big(
 |a_{1}|^2
 (\partial_{z\overline{w}}^2\Psi)(w,w;h)
 \notag \\
 & \phantom{---------}
 -
 2\Rea\left\{
 (\partial_{z\overline{w}}^2\Psi)(w,z;h)
 a_1\overline{a}_2
 \right\}
 +|a_{2}|^2
 (\partial_{z\overline{w}}^2\Psi)(z,z;h)
 \big)\Big]
 \notag \\
  &+
 \frac{4}{h^2}
 \left(
 (\partial_{z\overline{w}}^2\Psi)(z,z;h)
 (\partial_{z\overline{w}}^2\Psi)(w,w;h)
 -
 (\partial_{z\overline{w}}^2\Psi)(z,w;h)
 (\partial_{z\overline{w}}^2\Psi)(w,z;h)
 \e^{-\frac{2}{h}K(z,w)}
 \right)
 \notag \\
 & ~~\cdot\left(1 - \e^{-\frac{2}{h} K(z,w)} + 
 \mO(h^{\infty})\right) + \mO(h^{\infty}).
\end{align}
 Next, we consider the Taylor expansion of the terms $a_1$ 
 and $a_2$ up to first order. Similarly as in Proposition 
 \ref{prop:SP_XX}, we develop around the point $(\frac{z+w}{2},
 \frac{z+w}{2})$ and get that 
 \begin{align}
  \label{eqn:Tay1}
  a_1 &= (\partial_z\Psi)(z,z) - (\partial_z\Psi)(z,w) 
  \notag \\
  &=(\partial_{z\bar{w}}^2\Psi)\left(\frac{z+w}{2},\frac{z+w}{2}\right)
  \left(z-w\right)
  +\mO(|z-w|^2 + h^{\infty})
 \end{align}
and 
\begin{align}
\label{eqn:Tay2}
  a_2 &= (\partial_z\Psi)(w,z) - (\partial_z\Psi)(w,w) 
  \notag \\
  &=(\partial_{z\bar{w}}^2\Psi)\left(\frac{z+w}{2},\frac{z+w}{2}\right)
  \left(z-w\right)
  +\mO(|z-w|^2 + h^{\infty}).
 \end{align}
Moreover, one has that for $\zeta,\omega\in\{z,w\}$
\begin{equation}
\label{eqn:Tay3}
 (\partial_{z\bar{w}}^2\Psi)\left(\zeta,\omega\right)
 =
 (\partial_{z\bar{w}}^2\Psi)\left(\frac{z+w}{2},\frac{z+w}{2}\right)
 +\mO(|z-w| + h^{\infty}).
\end{equation}
Since we suppose that $|z-w|\gg h^{3/5}$, the above error term is 
equal to $\mO(|z-w|)$. Since $\partial_{z\bar{w}}^2\Psi$ 
is evaluated at a point on the diagonal, it follows from Proposition \ref{prop:SP_XX}, 
that
\begin{align}
 \label{eqn:AbrSigh}
 (\partial_{z\bar{w}}^2\Psi)\left(\frac{z+w}{2},\frac{z+w}{2}\right)
 &=
 (\partial_{z\bar{z}}^2\Phi)\left(\frac{z+w}{2},\frac{z+w}{2}\right)
 \notag \\
 &=\frac{1}{4}\sigma\left(\frac{z+w}{2}\right) + \mO(h)
 =:\frac{1}{4}\sigma_{h}(z,w).
\end{align}
Plugging the above Taylor expansion into \eqref{eqn:detG}, one gets 
that $\det G$ is equal to 
\begin{align*}
 &
 \frac{ \sigma_{h}(z,w)^2}{4h^2}
 \bigg\{
 \Big[1+\mO(|z-w|) - (1+\mO(|z-w|))\e^{-\frac{2}{h} K(z,w)}\Big]
 \left(1 - \e^{-\frac{2}{h} K(z,w)} + 
 \mO(h^{\infty})\right)
 \notag \\
 &\phantom{=}\left.
 -4 \e^{-\frac{2}{h} K(z,w)}
 \left(
 \left(
 \frac{\sigma_{h}(z,w)|z-w|^2}{4h}
 \right)^2(1+\mO(|z-w|) )
 +
 \frac{\sigma_{h}(z,w)|z-w|^2}{4h}
 \mO(|z-w|)
 \right)
 \right\}
 \notag \\
 &\phantom{=}
 + \mO(h^{\infty})
 \notag \\
 &=
 \frac{ \sigma_{h}(z,w)^2}{4h^2}
 \bigg\{
 \left(
  1 - \e^{-\frac{2}{h} K(z,w)}
 \right)^2
 +
 \mO(|z-w|) \left(1 - \e^{-\frac{2}{h} K(z,w)}\right) + 
 \mO(h^{\infty})
 \notag \\
 &\phantom{=}\left.
 -4 \e^{-\frac{2}{h} K(z,w)}
 \left[
 \left(
 \frac{\sigma_{h}(z,w)|z-w|^2}{4h}
 \right)^2
 +
 \mO\left(\frac{|z-w|^5}{h^2}\right)
 +
 \mO\left(\frac{|z-w|^3}{h}\right)
 \right]
 \right\}.
\end{align*}
Recall from \eqref{eqn:K} that $K(z,w) \asymp |z-w|^2$, wherefore we see that 
$\det G$ is positive for $|z-w|\gg \sqrt{h}$. Next, we suppose that 
$ |z-w| \asymp\sqrt{h}$. Hence, one gets that 
\begin{align}
\label{eqn:detG2}
 \det G 
 &=
 \frac{\sigma_{h}(z,w)^2\e^{-\frac{2}{h} K(z,w)} }{h^2}
 \bigg\{
  \sinh^2 \frac{K(z,w)}{h}
 +
 \mO(|z-w|) \left(\e^{\frac{2}{h} K(z,w)}-1\right) + 
 \mO(h^{\infty})
 \notag \\
 &\phantom{=}\left.
 -
 \left[
 \left(
 \frac{\sigma_{h}(z,w)|z-w|^2}{4h}
 \right)^2
 +
 \mO\left(\frac{|z-w|^5}{h^2}\right)
 +
 \mO\left(\frac{|z-w|^3}{h}\right)
 \right]
 \right\}.
\end{align}
Using the Taylor expansion of the $\sinh x$ and \eqref{eqn:K}, one gets that 
\begin{align}
 \label{eqn:sinh_db}
 \sinh^2& \frac{K(z,w)}{h} - \left(
 \frac{\sigma_{h}(z,w)|z-w|^2}{4h}
 \right)^2 
 \notag \\
 &\geq 
 \left(
 \frac{1}{3}\frac{\sigma_{h}(z,w)|z-w|^2}{4h}
 \right)^4(1 + \mO(|z-w|))
 +
 \mO\!\left(
 \frac{\sigma_{h}(z,w)|z-w|^5}{h^2}
 \right).
\end{align}
Note that the principal term on the right
hand side of the inequality dominates the error terms. The same 
holds true for the other error terms in \eqref{eqn:detG2}. 
\par
Next, let us suppose that $h^{3/5} \ll|z-w| \ll \sqrt{h}$. Since 
\begin{equation*}
 \mO(|z-w|) \left(\e^{\frac{2}{h} K(z,w)}-1\right) 
 = \mO\left(\frac{|z-w|^3}{h}\right),
\end{equation*}
it follows by \eqref{eqn:detG2} and \eqref{eqn:sinh_db} that $\det G $ 
is positive for $|z-w|\gg h^{3/5}$.
\end{proof}
\subsection{The permanent of $\Gamma$}\label{suse:GM4}
The permanent of the matrix $\Gamma$ (cf. \eqref{eq:Gam}) is vital to the $2$-point density 
of eigenvalues and therefore, we shall give a more detailed description 
of it than the one given in Corollary \ref{cor:det_Gamma}.
\par 
We begin by proving the following bound on the trace of $\Gamma$: 
\begin{prop}
 \label{prop:Trace_est_Gam}
 Under the assumptions of Proposition \ref{prop:mat_Gamma}, 
 we have that for $|z-w| \gg h$ 
 \begin{equation*}
   0  < \tr\Gamma \leq \mO(h^{-1}).
 \end{equation*}
\end{prop}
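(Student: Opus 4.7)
\smallskip

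The plan is to derive both halves from the explicit formula for $\tr\Gamma$ given in Corollary \ref{cor:det_Gamma}, combined with the structural observation that $\Gamma$ is a Schur complement of a Gramian. For the \emph{positivity}, I would argue as follows. The matrix $G$ in \eqref{eqn:G_44} is the Gram matrix of the four vectors $X(z),X(w),\partial_zX(z),\partial_wX(w)$, and is therefore positive semi-definite; by Proposition \ref{prop:detA} and Corollary \ref{cor:LB_MN_A}, for $|z-w|\gg h$ the block $A$ is even strictly positive definite with a quantitative lower bound. The Schur complement identity $\det G=\det A\cdot\det\Gamma$ and the standard fact that $G\succeq0,\ A\succ0\Rightarrow \Gamma\succeq0$ then give $\Gamma\succeq0$, and one gets strict positivity by ruling out the degenerate case where $\partial_zX(z)$ and $\partial_wX(w)$ both lie in $\mathrm{span}\{X(z),X(w)\}$: the phase $\Phi$ in Proposition \ref{prop:SP_XX} produces a factor $1/h$ in $\partial_zX$ that makes this impossible for $h$ small. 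In particular $\tr\Gamma>0$.

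\smallskip

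For the \emph{upper bound}, I would simply bound each of the two summands in Corollary \ref{cor:det_Gamma} by $\mO(h^{-1})$ using the triangle inequality. The required inputs are: (i) $\Psi''_{z\bar w}(z,z;h),\Psi''_{z\bar w}(w,w;h),\Psi''_{z\bar w}(z,w;h)=\mO(1)$, which follows from Propositions \ref{prop:SP_XX} and \ref{prop:Hess_Phas_SymplVol} (they equal $\sigma(\cdot)/4+\mO(h)$ on the diagonal and depend smoothly on $(z,w)$); (ii) $|a_1|,|a_2|=\mO(|z-w|)$, obtained from the definition $a_1=\partial_z\Psi(z,z;h)-\partial_z\Psi(z,w;h)$ together with the almost anti-holomorphy $\partial_w\Psi=\mO(|z-w|^\infty)$ of Proposition \ref{prop:SP_XX}, which by a single-variable Taylor expansion along the segment from $w$ to $z$ yields $a_1=(\partial_{\bar w}\partial_z\Psi)(z,w_*;h)\,\overline{(z-w)}+\mO(|z-w|^\infty)$ for some intermediate $w_*$; and (iii) $K(z,w)\asymp|z-w|^2$ in the regime under consideration, from \eqref{eqn:K}.

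\smallskip

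With these estimates at hand I would split into two cases. If $|z-w|\gtrsim\sqrt{h}$ then $\e^{2K/h}-1\gtrsim1$, the first bracketed term in $\tr\Gamma$ contributes $\mO(h^{-1})$ after cancelling $\e^{2K/h}-1$, and the second is dominated by $h^{-2}|a_i|^2/(\e^{2K/h}-1)=\mO(|z-w|^2 h^{-2}\e^{-2K/h})=\mO(h^{-1})$ (the exponential kills any polynomial growth of $|z-w|^2/h$). If $h\ll|z-w|\ll\sqrt h$ then $2K/h\ll1$ and $\e^{2K/h}-1\approx 2K/h\asymp|z-w|^2/h$, so the prefactor $2/[h(\e^{2K/h}-1)]$ is of order $|z-w|^{-2}$; the first term is then $\mO(|z-w|^{-2})\cdot\mO(|z-w|^2/h)=\mO(h^{-1})$ and the second is $\mO(|z-w|^{-2})\cdot\mO(|z-w|^2/h)=\mO(h^{-1})$. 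In both regimes $\tr\Gamma\leq\mO(h^{-1})$, as claimed.

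\smallskip

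The main obstacle I anticipate is the strict positivity statement $\tr\Gamma>0$: since the two $\mO(h^{-1})$ contributions in the explicit formula of Corollary \ref{cor:det_Gamma} carry opposite signs and in fact cancel to leading order when $|z-w|\ll\sqrt h$ (one checks that both equal $\sigma/h$ up to lower order, exactly as one expects because $|a_i|^2/|z-w|^2\to (\partial_{\bar w}\partial_z\Psi)^2=\sigma^2/16$), a naive estimate does not immediately give $\tr\Gamma>0$. This is precisely why I prefer to deduce positivity abstractly from the Gramian/Schur-complement structure rather than from the explicit formula, and to use the explicit formula only for the upper bound.
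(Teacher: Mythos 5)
Your Schur-complement observation that $G$ is the Gram matrix of $X(z),X(w),\partial_zX(z),\partial_wX(w)$, hence $G\succeq 0$, so that $A\succ 0$ forces $\Gamma\succeq 0$ and therefore $\tr\Gamma\geq 0$, is correct and a clean structural remark that the paper does not spell out. Your case-split upper bound argument is also sound given inputs (i)--(iii), though the paper obtains it more directly: since $2h^{-1}(|a_1|^2+|a_2|^2)\geq 0$, one simply drops the second summand in Corollary \ref{cor:det_Gamma}, cancels $\e^{2K/h}-1$, and reads off $\tr\Gamma\leq\mO(h^{-1})$.

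The gap is in the strict positivity $\tr\Gamma>0$. Since $\Gamma\succeq 0$, this is equivalent to $\Gamma\neq 0$, i.e.\ to at least one of $\partial_zX(z),\partial_wX(w)$ not lying in $\mathrm{span}\{X(z),X(w)\}$. Your justification --- that the factor $1/h$ coming from $\partial_z$ acting on the phase rules this out for small $h$ --- is a heuristic, not a proof, and crucially it does not explain where the hypothesis $|z-w|\gg h$ enters: as $w\to z$ the vectors $X(z),X(w)$ coalesce and one should expect precisely this degeneracy to become borderline. The cancellation you flag as the obstacle to reading positivity off the explicit formula is in fact the heart of the proof, and the paper confronts it rather than avoiding it. Substituting \eqref{eqn:Tay1}--\eqref{eqn:Tay3} into Corollary \ref{cor:det_Gamma} gives, as in \eqref{eqn:tay_tr},
\begin{equation*}
  \tr\Gamma =
  \frac{\sigma_{h}(z,w)}{2h\left(\e^{\frac{2}{h} K} - 1\right) }
  \left[
  \left(\e^{\frac{2}{h} K} - 1\right)(1 + \mO(|z-w|))
  -\frac{\sigma_{h}(z,w)|z-w|^2}{2h}(1 + \mO(|z-w|))
  \right],
\end{equation*}
and the elementary bound $\e^x-1\geq x+\tfrac12 x^2$ with $x=2K/h=\tfrac{\sigma_h|z-w|^2}{2h}(1+\mO(|z-w|+h^\infty))$ shows the terms linear in $x$ cancel, leaving a positive quartic contribution of order $|z-w|^4/h^2$ against mismatch errors of order $|z-w|^3/h$. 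The positive term dominates exactly when $|z-w|\gg h$, which is where the hypothesis comes from. You should replace the heuristic by this estimate; it gives both the strict positivity and the upper bound at once.
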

\begin{proof}
 Using \eqref{eqn:Tay1}, \eqref{eqn:Tay2} and \eqref{eqn:Tay3}, 
 one gets that 
 \begin{align}
  \label{eqn:tay_tr}
  \tr\Gamma = 
  \frac{\sigma_{h}(z,w)}{2h\left(\e^{\frac{2}{h} K(z,w)} - 1\right) }
  \Big[&
  \left(\e^{\frac{2}{h} K(z,w)} - 1\right) 
  (1 + \mO(|z-w|))
  \notag \\
  &-
  \frac{\sigma_{h}(z,w)|z-w|^2}{2h}(1 + \mO(|z-w|))
  \Big].
 \end{align}
Since 
\begin{equation*}
 \e^{\frac{2}{h} K(z,w)} - 1 
 \geq 
 \frac{\sigma_{h}(z,w)|z-w|^2}{2h}(1 + \mO(|z-w|))
 +
 \frac{\sigma_{h}(z,w)|z-w|^4}{8h^2}(1 + \mO(|z-w|)),
\end{equation*}
it follows that for $|z-w| \gg h$ the trace of $\Gamma$ 
is positive. Furthermore, the above inequality applied to 
\eqref{eqn:tay_tr}, implies the upper bound stated in 
the Proposition.
\end{proof}
\begin{prop}
\label{prop:perm_Gamma}
 Let $\sigma_h(z,w)$ be as in Theorem \ref{thm_H2} and let $K(z,w)$ be 
 as in \eqref{eqn:K}. Under the assumptions 
 of Proposition \ref{prop:mat_Gamma}, we have that for $N>1$ and 
 $ \frac{1}{C} h^N \leq |z-w|$, 
 \begin{align*}
  &\perm\Gamma(z,w;h) \notag \\
   &=  \frac{1}{4h^{2}}
	  \Bigg[
	  \sigma_h(z,z)\sigma_h(w,w) + \sigma_h(z,w)^2(1 + \mO(|z-w|))
	      \e^{-\frac{2K(z,w)}{h}}+ \mO(h^{\infty})
	  \notag \\
	  &+ 
	  \frac{\sigma_h(z,w)^2(1 + \mO(|z-w|)) }{\e^{\frac{K(z,w)}{h}}\sinh \frac{K(z,w)}{h}}
	  \left(
	  \left(\frac{\sigma_h(z,w)|z-w|^2}{4h}\right)^2 2\coth \frac{K(z,w)}{h} 
	  - \frac{\sigma_h(z,w)|z-w|^2}{h} 
	  \right)
	  \Bigg].
 \end{align*}
\end{prop}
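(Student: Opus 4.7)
The proof starts directly from the closed-form expression for $\perm\Gamma$ already written down in Corollary \ref{cor:det_Gamma}, which has three lines: a line of order $e^{-2K/h}/(1-e^{-2K/h})^2$ involving $|a_1 a_2|^2$, a line of order $e^{-2K/h}/(1-e^{-2K/h})$ involving $|a_j|^2$ and $\Rea(\Psi''_{z\overline{w}}(w,z)a_1\overline{a}_2)$, and a line of order $1$ involving only products of $\Psi''_{z\overline{w}}$. My plan is to plug the Taylor expansions \eqref{eqn:Tay1}--\eqref{eqn:Tay3} and \eqref{eqn:AbrSigh} into each line separately, use the elementary hyperbolic identities listed below to rewrite the exponential prefactors, and then recognize the target expression. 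For the off-diagonal quantities one has $a_1=a_2=\tfrac{1}{4}\sigma_h(z,w)(z-w)(1+\mO(|z-w|))+\mO(h^{\infty})$, which yields $|a_1|^2=|a_2|^2=\Rea(a_1\overline{a}_2)=\tfrac{1}{16}\sigma_h(z,w)^2|z-w|^2(1+\mO(|z-w|))$ and $|a_1 a_2|^2=\tfrac{1}{256}\sigma_h(z,w)^4|z-w|^4(1+\mO(|z-w|))$; on the off-diagonal $\Psi''_{z\overline{w}}(z,w)=\Psi''_{z\overline{w}}(w,z)=\tfrac{1}{4}\sigma_h(z,w)(1+\mO(|z-w|))$, whereas on the diagonal I keep $\Psi''_{z\overline{w}}(z,z)=\tfrac{1}{4}\sigma_h(z,z)+\mO(h)$ and $\Psi''_{z\overline{w}}(w,w)=\tfrac{1}{4}\sigma_h(w,w)+\mO(h)$ un-collapsed.

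With these substitutions, the third line of Corollary \ref{cor:det_Gamma} becomes immediately $\tfrac{1}{4h^2}[\sigma_h(z,z)\sigma_h(w,w)+\sigma_h(z,w)^2(1+\mO(|z-w|))e^{-2K/h}]$, which reproduces the first two summands of the target. For the middle line, I combine $|a_1|^2\Psi''_{z\overline{w}}(w,w)+|a_2|^2\Psi''_{z\overline{w}}(z,z)+2\Rea(\Psi''_{z\overline{w}}(w,z)a_1\overline{a}_2)$ and use $\sigma_h(z,z)+\sigma_h(w,w)=2\sigma_h(z,w)+\mO(|z-w|^2+h)$ (Taylor expansion of $\sigma$ at the midpoint) to reduce the bracket to $\tfrac{1}{16}\sigma_h(z,w)^3|z-w|^2(1+\mO(|z-w|))$; invoking $(e^{2x}-1)^{-1}=(2e^{x}\sinh x)^{-1}$ then puts the middle line in the form $-\tfrac{1}{4h^2}\cdot \tfrac{\sigma_h(z,w)^2(1+\mO(|z-w|))}{e^{K/h}\sinh(K/h)}\cdot\tfrac{\sigma_h(z,w)|z-w|^2}{h}$. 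For the first line, the identity
\begin{equation*}
\frac{e^{-2x}(1+e^{-2x})}{(1-e^{-2x})^2}=\frac{\coth x}{2 e^{x}\sinh x},
\end{equation*}
obtained directly from $1+e^{-2x}=2e^{-x}\cosh x$ and $(1-e^{-2x})^2=4e^{-2x}\sinh^2 x$, rewrites it as $\tfrac{1}{4h^2}\cdot\tfrac{\sigma_h(z,w)^2(1+\mO(|z-w|))}{e^{K/h}\sinh(K/h)}\cdot\bigl(\tfrac{\sigma_h(z,w)|z-w|^2}{4h}\bigr)^{2}\cdot 2\coth(K/h)$. Summing the three contributions and factoring the common $\tfrac{\sigma_h(z,w)^2(1+\mO(|z-w|))}{e^{K/h}\sinh(K/h)}$ from the two exponentially damped ones yields exactly the formula of the proposition.

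The analytic content here is already entirely in Corollary \ref{cor:det_Gamma} and the Taylor expansions established in the proof of Proposition \ref{prop:GramInvert}; the only real obstacle is bookkeeping. In particular, one must be careful to collapse $\sigma_h(z,z),\sigma_h(w,w)\to\sigma_h(z,w)$ freely in the middle line (where relative errors $\mO(|z-w|)$ can be absorbed into the $(1+\mO(|z-w|))$ factor), but to keep the product $\sigma_h(z,z)\sigma_h(w,w)$ un-collapsed in the line of order $1$, since there is no damping factor $e^{-2K/h}$ to hide an $\mO(|z-w|)$ relative error behind, and replacing it by $\sigma_h(z,w)^2$ would change the leading behavior of the two-point density at the long-range scale $|z-w|\gg(h\ln h^{-1})^{1/2}$ that we eventually need in Proposition \ref{prop:2pt_dens_prop}. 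Once this distinction is respected, the remainder of the computation is mechanical algebra.
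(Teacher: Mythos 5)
Your proof is correct and follows the paper's own route: substitute the Taylor expansions \eqref{eqn:Tay1}--\eqref{eqn:Tay3} together with \eqref{eqn:AbrSigh} into the expression for $\perm\Gamma$ in Corollary \ref{cor:det_Gamma} and rewrite the exponential prefactors via the hyperbolic identities $1-\e^{-2x}=2\e^{-x}\sinh x$, $1+\e^{-2x}=2\e^{-x}\cosh x$. You make the intermediate algebra and the reason for keeping $\sigma_h(z,z)\sigma_h(w,w)$ un-collapsed in the undamped term slightly more explicit, but the argument and bookkeeping are the same as the paper's.
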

\begin{proof}
 Applying \eqref{eqn:Tay1}, \eqref{eqn:Tay2} and \eqref{eqn:Tay3} 
 to the formula for $\perm \Gamma$ given in Proposition 
 \ref{prop:Trace_est_Gam} and using the notation introduced in 
 \eqref{eqn:AbrSigh}, one gets that 
 \begin{align*}
 \perm\Gamma =& 
 \frac{8\coth\frac{K}{h}}{h^4\sinh \frac{K}{h}}\e^{-\frac{1}{h} K(z,w)}
  |4^{-2}\sigma_h(z,w)^2(z-w)^2(1 + \mO(|z-w|)|^2
  \notag \\
  &
  -
 \frac{\e^{-\frac{1}{h} K(z,w)}}{4 h^3\sinh \frac{K}{h}}
 \sigma_h(z,w)^3|z-w|^2(1 + \mO(|z-w|)
 \notag \\
  &+
 \frac{1}{4h^2}
 \left(
 \sigma_h(z,z)
 \sigma_h(w,w)
 +
 \sigma_h(z,w;h)^2(1 + \mO(|z-w|)
 \e^{-\frac{2}{h}K(z,w)}
 \right)
 \notag \\
 & + \mO(h^{\infty}).
\end{align*}
 Thus, one computes that 
 \begin{align*}
 \perm\Gamma =& 
 \frac{\sigma_h(z,w)^2(1 + \mO(|z-w|)}{4h^2\e^{\frac{1}{h} K(z,w)}\sinh \frac{K}{h}}
 \left[
  \left(\frac{\sigma_h(z,w)|z-w|^2}{4h}\right)^2 2\coth\frac{K}{h} 
  - \frac{\sigma_h(z,w)|z-w|^2}{h}
 \right]
 \notag \\
  &+
 \frac{1}{4h^2}
 \left(
 \sigma_h(z,z)
 \sigma_h(w,w)
 +
 \sigma_h(z,w;h)^2(1 + \mO(|z-w|)
 \e^{-\frac{2}{h}K(z,w)}
 \right)
 \notag \\
 & + \mO(h^{\infty})
\end{align*}
and we conclude the statement of the proposition. 
\end{proof}
%
\section{Proof of Proposition \ref{prop:2ptCorrelation}}
\label{sec:CorrelationFormula}
The first ingredient of the proof of Proposition \ref{prop:2ptCorrelation}, 
is the following global version of the implicit function theorem.
\begin{lem}
\label{lem:ImplFunThm}
 Let  $0<R_0<R$, let $n,m\in\mathds{N}$, with $n > m$, and let 
 $B(0,R)\subset\C^n=\C_z^{n-m}\times\C_w^{m}$ denote the complex open ball of radius $R>0$ 
 centered at $0$. For $z\in B_{\C^{n-m}}(0,R_0)$, define 
 $R(z) := (R^2 - \lVert z\rVert^2_{\C^{n-m}})^{1/2}$. We consider a 
 holomorphic function
 \begin{equation*}
  F: B(0,R) \longrightarrow \mathds{C}^m
 \end{equation*}
 such that   
	 \begin{itemize}
	 \item for all $(z,w)\in B(0,R)$ the Jacobian of $F$ with respect to $w$ is given 
	       by 
            \begin{equation*}
              \frac{\partial F(z,w)}{\partial w}= A + G(z,w), 
            \end{equation*}  
            where ${G:B(0,R) \longrightarrow \mathds{C}^{m\times m}}$ is a matrix-valued 
            holomorphic function and 
	  \item $A\in\mathrm{GL}_m({\mathds{C}})$ such that 
	     \begin{equation*}
	      \lVert A^{-1}\rVert\cdot\lVert G(z,w)\rVert\leq \theta < 1
	     \end{equation*}
             for all $(z,w)\in B(0,R)$.
	 \end{itemize}
  Then, for all $z\in B_{\C^{n-m}}(0,R_0)$ and for all 
  $y\in B_{\C^{m}}(F(z,0),\frac{1-\theta}{\lVert A^{-1}\rVert}r)$, with $0< r < R(z)$, 
  the equation 
  \begin{equation}
   \label{eqn:eqn2Inv}
   F(z,w) = y
  \end{equation}
  has exactly one solution $w(z,y)\in B_{\C^{m}}(0,R(z))$, it satisfies 
  $w(z,y) \in B_{\C^{m}}(0,r)$ and it depends holomorphically on $z$ and 
  on $y$.
\end{lem}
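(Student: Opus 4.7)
The plan is to reformulate \eqref{eqn:eqn2Inv} as a fixed point problem and to apply the Banach fixed point theorem, using the hypothesis that $A^{-1}G$ is a strict contraction uniformly on $B(0,R)$.

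Fix $z\in B_{\C^{n-m}}(0,R_0)$ and $y\in B_{\C^{m}}(F(z,0),\frac{1-\theta}{\lVert A^{-1}\rVert}r)$. Set $H(z,w):=F(z,w)-Aw$ and define the map
\begin{equation*}
 T_{z,y}(w) := A^{-1}\bigl(y-H(z,w)\bigr),\qquad w\in B_{\C^{m}}(0,R(z)),
\end{equation*}
so that $w$ is a fixed point of $T_{z,y}$ if and only if $F(z,w)=y$. First I will check that $T_{z,y}$ is a $\theta$-contraction on the whole ball $B_{\C^{m}}(0,R(z))$: since $\partial_w H(z,w)=G(z,w)$ and the ball is convex, the integral form of the mean value theorem gives
\begin{equation*}
 \lVert T_{z,y}(w_1)-T_{z,y}(w_2)\rVert \leq \lVert A^{-1}\rVert\cdot\sup_{w}\lVert G(z,w)\rVert\cdot\lVert w_1-w_2\rVert\leq \theta\lVert w_1-w_2\rVert.
\end{equation*}

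Next I will show that $T_{z,y}$ maps the closed ball $\overline{B_{\C^m}(0,r)}$ into itself. Since $H(z,0)=F(z,0)$, the hypothesis on $y$ yields
\begin{equation*}
 \lVert T_{z,y}(0)\rVert=\lVert A^{-1}(y-F(z,0))\rVert<(1-\theta)r,
\end{equation*}
and combined with the contraction estimate this gives $\lVert T_{z,y}(w)\rVert<(1-\theta)r+\theta r=r$ for all $w$ with $\lVert w\rVert\leq r$. Banach's fixed point theorem then produces a unique fixed point $w(z,y)\in\overline{B(0,r)}$; the strict inequality above actually forces $w(z,y)\in B_{\C^m}(0,r)$. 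Uniqueness in the larger ball $B_{\C^m}(0,R(z))$ is automatic from the global contraction property: two fixed points $w_1,w_2$ there satisfy $\lVert w_1-w_2\rVert\leq\theta\lVert w_1-w_2\rVert$, hence coincide.

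Finally, I will obtain the holomorphic dependence of $w(z,y)$ on $(z,y)$ via the convergence of Picard iterates. Starting from $w_0=0$, the iterates $w_{k+1}(z,y):=T_{z,y}(w_k(z,y))$ are holomorphic in $(z,y)$ on $B_{\C^{n-m}}(0,R_0)\times B_{\C^m}(F(z,0),\frac{1-\theta}{\lVert A^{-1}\rVert}r)$, because $F$ and hence $H$ are holomorphic and the parameter dependence enters linearly through $y$. The contraction estimate gives $\lVert w_{k+1}-w_k\rVert\leq\theta^k\lVert w_1-w_0\rVert$, so the sequence converges uniformly on compact subsets to $w(z,y)$, which is therefore holomorphic by the Weierstrass theorem.

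The routine part is the contraction/self-map computation; the only point to be a bit careful about is the distinction between the open ball where the fixed point lies and the closed ball on which Banach's theorem is applied, together with the fact that the domain condition $(z,w)\in B(0,R)$ is exactly captured by $r<R(z)$.
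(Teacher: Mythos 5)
Your proof is correct, and it takes a genuinely different route from the paper. The paper proves injectivity directly by writing $y_1-y_0=(A+H(z,w_1,w_0))(w_1-w_0)$ with $H$ an average of $G$, and then establishes existence by a homotopy/continuity argument: it connects $\widetilde F(0)$ to $y$ by a segment $y_t$, uses the local implicit function theorem to start and continue, and a compactness argument to show that the set of solvable $t$ is both open and closed in $[0,1]$. Holomorphic dependence is then deduced from the holomorphic implicit function theorem, using the uniform invertibility of the Jacobian. You instead recast $F(z,w)=y$ as a fixed point equation for the map $T_{z,y}(w)=A^{-1}\bigl(y-H(z,w)\bigr)$ with $H(z,w)=F(z,w)-Aw$, verify that $T_{z,y}$ is a $\theta$-contraction on the convex ball $B_{\C^m}(0,R(z))$ (here the convexity is what makes the mean-value estimate for $H$ legitimate, exactly as in the paper's injectivity step), check the self-map property on $\overline{B(0,r)}$ using the hypothesis on $y$, and invoke Banach's fixed point theorem. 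This delivers existence, the containment $w(z,y)\in B(0,r)$, and uniqueness in the full ball $B(0,R(z))$ in one stroke, replacing the paper's two separate claims and the continuation argument. For regularity you use Picard iterates and Weierstrass's theorem on locally uniform limits of holomorphic functions, avoiding any appeal to the implicit function theorem. Both approaches are standard; yours is arguably more economical, the paper's has the advantage of making the Lipschitz estimate $\lVert w_1-w_0\rVert\leq\frac{\lVert A^{-1}\rVert}{1-\theta}\lVert y_1-y_0\rVert$ explicit, which is reused in spirit when it justifies where the solution lands.

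One small point worth spelling out if you write this up: in the self-map step, after $\lVert T_{z,y}(0)\rVert<(1-\theta)r$ and $\lVert T_{z,y}(w)-T_{z,y}(0)\rVert\leq\theta\lVert w\rVert\leq\theta r$, you should note that the strictness propagates because the first inequality is strict, so $\lVert T_{z,y}(w)\rVert<r$ on the closed ball; this is what lets you conclude that the fixed point lies in the open ball $B_{\C^m}(0,r)$, as required.
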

\begin{rem}
 Observe that the choice of $R_0 < R$ yields a uniform lower bound on $R(z)$ 
 and so we can choose the radius of the ball 
 $B_{\C^{m}}(F(z,0),\frac{1-\theta}{\lVert A^{-1}\rVert}r)$ uniformly in $z$. 
 This will become important in the proof of Proposition \ref{prop:2ptCorrelation}.
\end{rem}
\begin{proof} 
 Let $z\in B_{\C^{n-m}}(0,R_0)$ and set
 \begin{equation*}
  B_{\C^{m}}(0,R(z)) \ni w \longmapsto \widetilde{F}(w):=F(z,w).
 \end{equation*}
 We begin by observing that $ d\widetilde{F}(w)$ is invertible for all 
 $w\in B_{\C^m}(0,R(z))$ and the norm of the inverse is bounded 
 (uniformly in $z$). Indeed, for one has that 
 \begin{equation*}
  \left\lVert\left(d\widetilde{F}(w)\right)^{-1}
  \right\rVert 
   \leq \lVert A^{-1}\rVert \cdot \lVert(1 + A^{-1} G(z,w))^{-1} \rVert 
  \leq \frac{\lVert A^{-1}\rVert}{1-\theta}.
 \end{equation*}
 \\
 \textit{Claim $\# 1$}: $\widetilde{F}$ is injective. 
 \par 
 Let $w_0,w_1\in B_{\C^{m}}(0,R(z))$ and define $y_i:=\widetilde{F}(w_i)$. 
 Hence, with $w_t:=(1-t)w_0+tw_1$, we have that
 \begin{equation*}
  \frac{d}{dt}\widetilde{F}(w_t) 
  = d\widetilde{F}(w_t)\cdot(w_1-w_0)
  =(A + G(z,w_t))\cdot(w_1-w_0).
 \end{equation*}
 Thus, 
 \begin{equation*}
 y_1-y_0 = (A + H(z,w_1,w_0))\cdot(w_1-w_0),
 \quad 
 H(z,w_1,w_0))=
 \int_0^1G(z,w_t) dt,
 \end{equation*}
 where $\lVert H(z,w_1,w_0)\rVert \leq \sup_{B(0,R)} \lVert G(z,w)\rVert $. 
 Therefore, $\lVert A^{-1}\rVert\cdot\lVert H(z,w_1,w_0)\rVert 
 \leq \theta < 1$, and we see that $(A+H(z,w_1,w_0))$ is invertible and the 
 norm of its inverse is $\leq \frac{\lVert A^{-1}\rVert}{1-\theta}$ (uniformly 
 in $z$). Hence, 
 \begin{equation}
 \label{eqn:FInj}
  \lVert w_1 - w_0\rVert \leq 
  \frac{\lVert A^{-1}\rVert}{1-\theta} \lVert y_1 - y_0 \rVert,
 \end{equation}
 and we conclude that $\widetilde{F}$ is injective. In particular, 
 we have proven the uniqueness of the solution to the equation 
 \eqref{eqn:eqn2Inv}. 
 \\
 \par
 \textit{Claim $\# 2$}: Let $0< r < R(z)$. Then, for all 
 $y\in B_{\C^{m}}(\widetilde{F}(0),\frac{1-\theta}{\lVert A^{-1}\rVert}r)$ there exists a 
 $w\in B_{\C^{m}}(0,r)$ such that 
 \begin{equation*}
  \widetilde{F}(w) = y .
 \end{equation*}
  \par
 For $y = \widetilde{F}(0)$, we take $w=0$. Using the fact that $d\widetilde{F}$ is invertible 
 everywhere, the implicit function theorem implies that for all $y\in B(\widetilde{F}(0),\rho)$ 
 there exists a solution $w\in B_{\C^{m}}(0,r)$, if $\rho>0$ is small enough (cf. \eqref{eqn:FInj}). 
 Let $y\in B_{\C^{m}}(\widetilde{F}(0),\frac{1-\theta}{\lVert A^{-1}\rVert}r)$, and define 
 $y_t:=(1-t)\widetilde{F}(0) + ty$. Let $t_0\in [0,1]$ be the supremum of 
 $\widetilde{t}\in [0,1]$ such that there exists a solution to $\widetilde{F}(w_t)=y_t$ 
 for all $0 \leq t \leq \widetilde{t}$. 
 \par
 We have already proven that $t_0 >0$. As $t\nearrow t_0$ we have that 
 $w_t\in B_{\C^{m}}(0,r)$. Since $B_{\C^{m}}(0,r)$ is relatively compact in 
 $B_{\C^{m}}(0,R(z))$, there exists a sequence $t_j \nearrow t_0$ such 
 that $w_{t_j}\rightarrow \widetilde{w}$ with 
 $\widetilde{w}\in \overline{B_{\C^{m}}(0,r)}$. Thus, 
 \begin{equation*}
  \widetilde{F}(\widetilde{w}) = y_{t_0},
 \end{equation*}
 and we see by \eqref{eqn:FInj} that $\widetilde{w}\in B_{\C^{m}}(0,r)$. 
 \par
 If $t_0<1$, we get by the implicit function theorem, that for all 
 $y \in B(y_{t_0},\delta)$, with $\delta>0$ small enough, there exists 
 a solution $w\in B_{\C^{m}}(0,r)$. Therefore, we can solve 
 $\widetilde{F}(w_t) = y_t$ for all $0< t < t_0 + \delta$, which is a 
 contradiction. Hence, $t_0=1$, which concludes the proof of the existence 
 of a solution. 
 \\
 \par 
 Finally, note that for all $(z,w)\in B(0,R)$ the Jacobian 
 $\partial F(z,w)/ \partial w $ is invertible and the norm of its 
 inverse is uniformly bounded, indeed 
\begin{equation*}
  \left\lVert\left(\frac{\partial F(z,w)}{\partial w}\right)^{-1}
  \right\rVert 
   \leq \lVert A^{-1}\rVert \cdot \lVert(1 + A^{-1} G(z,w))^{-1} \rVert 
  \leq \frac{\lVert A^{-1}\rVert}{1-\theta}.
 \end{equation*}
 In particular, we have that the determinant of the Jacobian is 
 never equal to $0$, and we conclude by the holomorphic implicit 
 function theorem that the solution $w(z,y)$ to the equation 
 \eqref{eqn:FInj} depends holomorphically on $z$ and $y$. 
\end{proof}
\begin{proof}[of Proposition \ref{prop:2ptCorrelation}] 
 In view of \eqref{eqn:kthMoment}, it remains to study the integral 
 \begin{align}
   \label{eqn:Ist}
    I(z_1,z_2,h)= \lim\limits_{\varepsilon\rightarrow 0^+}
    \pi^{-N}\int_{B(0,R)}
    H^{\delta}_{\varepsilon}(z_1,z_2,\alpha;h)
    \e^{-\alpha\overline{\alpha}}L(d\alpha).
 \end{align}
 with
 \begin{equation*}
  H^{\delta}_{\varepsilon}(z_1,z_2,\alpha;h):=
  \prod_{k=1}^2\varepsilon^{-2}\chi
    \left(\frac{E_{-+}^{\delta}(z_k,\alpha)}{\varepsilon}\right)
    |\partial_{z_k}E_{-+}^{\delta}(z_k,\alpha)|^2 
 \end{equation*}
 for $1/C \geq |z_1-z_2| \gg h^{3/5}$. We begin by performing a 
 change of variables in the $\alpha$-space. 
 \\
 \\
 \textbf{Change of variables:}  
  For $X(z)\in\C^N$ as in Definition \ref{def:X}, define 
  the matrix
  \begin{equation*}
   {^tV}:=\left(X(z_1),X(z_2),\partial_{z_1}X(z_1),
   \partial_{z_2}X(z_2)\right) \in \C^{N\times 4}
  \end{equation*}
 and note that the Gramian matrix $G$ (cf. \eqref{eqn:G_44}) 
 satisfies
 \begin{equation*}
  G = 
  \begin{pmatrix}
   A & B \\
   B^* & C \\
  \end{pmatrix}
  =
  V\cdot V^*.
 \end{equation*}
 Moreover, $G$ is invertible by virtue of 
 Proposition \ref{prop:GramInvert}, since $|z_1-z_2| \gg h^{3/5}$. 
 Next, we define the matrix $U\in\C^{4\times 4}$ by 
 \begin{equation*}
   U:=\begin{pmatrix}
          1& 
          0 \\ 
          B^*A^{-1}& 
          1\\ 
          \end{pmatrix}.
 \end{equation*}
 $U$ is invertible and thus satisfies that $(U^{-1})^* = (U^*)^{-1}$. 
 Define the matrix  
 \begin{equation*}
  \widetilde{G} := 
  \begin{pmatrix}
   A &0  \\
   0 & \Gamma \\
  \end{pmatrix}\in\C^{4\times 4}, 
 \end{equation*}
 and notice that 
 \begin{align*}
   U
   \begin{pmatrix}
          A& 
          0 \\ 
          0 & 
          \Gamma \\ 
   \end{pmatrix}
    U^*
    =
    \begin{pmatrix}
          1& 
          0 \\ 
          B^*A^{-1}& 
          1\\ 
    \end{pmatrix}
   \widetilde{G}
   \begin{pmatrix}
          1& 
          A^{-1}B \\ 
          0& 
          1\\ 
    \end{pmatrix}
    =G.
  \end{align*}
 We see that $\widetilde{G} = U^{-1}G(U^*)^{-1}$. Next, we define 
 the matrix 
 \begin{equation}
  \label{eqn:Vtilde}
  \widetilde{V}^* := (U^{-1}V)^* \widetilde{G}^{-\frac{1}{2}} 
  \in \C^{N\times 4}.
 \end{equation}
 $\widetilde{V}^*$ is an isometry since $\widetilde{V} \widetilde{V}^* = 1_{\C^4}$. 
 Thus, its columns form an orthonormal 
 family in $\C^N$. It follows from \eqref{eqn:Vtilde} that the kernel of $V$ 
 and of $\widetilde{V}$ are equal, i.e. 
 $\mathcal{N}(V)=\mathcal{N}(\widetilde{V})$. The same holds true 
 for the range of $\widetilde{V}$ and of $V$, i.e. 
 $\mathcal{R}(V)=\mathcal{R}(\widetilde{V})$.
 \par
 Next, we choose an orthonormal basis, $e_1,\dots, e_N \in \mathds{C}^N$, of 
 the space of random variables $\alpha$ such that $\widetilde{V}_1^*,\dots,
 \widetilde{V}_4^*$, the column vectors of the matrix $\widetilde{V}^*$, are 
 among them. In particular, let $e_i=\widetilde{V}_i^*$ for $i=1,\dots,4$, 
 and let $e_5,\dots,e_N$ be in the orthogonal complement of the space 
 spanned by $e_1,\dots,e_4$. Hence, we write for $\alpha\in\C^N$
 \begin{equation*}
  \alpha = \sum_{i=1}^{N} \widetilde{\alpha}_ie_i ,
 \end{equation*}
 where
 $\widetilde{\alpha}=(\widetilde{\alpha}_1,\dots,\widetilde{\alpha}_N) \in\C^N$. 
 Moreover, note that 
 \begin{equation}
  \label{eqn:alpahIso}
  \alpha^* \cdot \alpha = \widetilde{\alpha}^* \cdot\widetilde{\alpha}.
 \end{equation}
 \begin{rem}
 Recall from Proposition \ref{prop:GramInvert} that we can only guarantee the 
 invertibility of $G$ for $ h^{\frac{3}{5}} \ll |z-w| \ll 1$. This makes the assumption 
 in Proposition \ref{prop:2ptCorrelation} (and Theorem \ref{thm_H2}) that the support 
 of the test function $\varphi$ avoids $D(\Omega,c)$, see \eqref{eq_i27}, necessary. 
 This might be avoided by choosing another set of basis vectors.
 \end{rem}
 Next, we apply this change of variables to the vector $F$ given in 
 \eqref{eqn:F} and we get
 \begin{align*}
    &F(z,\alpha(\widetilde{\alpha});\delta,h) 
    \notag \\
    &=
    \begin{pmatrix}
            E_{-+}(z_1) \\ 
            E_{-+}(z_2) \\ 
            (\partial_z E_{-+})(z_1) \\
            (\partial_z E_{-+})(z_2) \\
   \end{pmatrix}
   -
   \delta
    \begin{pmatrix}
        {^tX}(z_1) \\ 
        {^tX}(z_2)  \\ 
        {^t(\partial_z X)}(z_1) \\
        {^t(\partial_z X)(z_2)} \\
   \end{pmatrix}
   \cdot
   \alpha(\widetilde{\alpha})
   +
        \begin{pmatrix}
            T(z_1,\alpha(\widetilde{\alpha})) \\ 
            T(z_2,\alpha(\widetilde{\alpha})) \\ 
            (\partial_z T)(z_1,\alpha(\widetilde{\alpha})) \\ 
            (\partial_z T)(z_2,\alpha(\widetilde{\alpha})) \\ 
     \end{pmatrix}
     \notag \\
     &=
    \begin{pmatrix}
            E_{-+}(z_1) \\ 
            E_{-+}(z_2) \\ 
            (\partial_z E_{-+})(z_1) \\
            (\partial_z E_{-+})(z_2) \\
   \end{pmatrix}
   -
   \delta
    (V\cdot \widetilde{V}) \cdot
    \begin{pmatrix}
            \widetilde{\alpha}_1 \\ 
            \vdots \\ 
            \widetilde{\alpha}_{4} \\
   \end{pmatrix}
   +
        \begin{pmatrix}
            T(z_1,\alpha(\widetilde{\alpha})) \\ 
            T(z_2,\alpha(\widetilde{\alpha})) \\ 
            (\partial_z T)(z_1,\alpha(\widetilde{\alpha})) \\ 
            (\partial_z T)(z_2,\alpha(\widetilde{\alpha})) \\ 
     \end{pmatrix}.
  \end{align*}
 Furthermore, one computes that 
  \begin{equation}
  \label{eqn:UsqrtG}
   V\widetilde{V} = 
   U\widetilde{G}^{\frac{1}{2}} = 
   \begin{pmatrix}
          A^{\frac{1}{2}}& 
          0 \\ 
          B^*A^{-\frac{1}{2}}& 
          \Gamma^{\frac{1}{2}}\\ 
          \end{pmatrix},
  \end{equation}
 and we get that 
\begin{align*}
    F(z,\alpha(\widetilde{\alpha});\delta,h) 
    =
    \begin{pmatrix}
            E_{-+}(z_1) \\ 
            E_{-+}(z_2) \\ 
            (\partial_z E_{-+})(z_1) \\
            (\partial_z E_{-+})(z_2) \\
   \end{pmatrix}
   -
   \delta
   U\widetilde{G}^{\frac{1}{2}}\cdot
   \begin{pmatrix}
            \widetilde{\alpha}_1 \\ 
            \vdots \\ 
            \widetilde{\alpha}_{4} \\
   \end{pmatrix}
   +
        \begin{pmatrix}
            T(z_1,\alpha(\widetilde{\alpha})) \\ 
            T(z_2,\alpha(\widetilde{\alpha})) \\ 
            (\partial_z T)(z_1,\alpha(\widetilde{\alpha})) \\ 
            (\partial_z T)(z_2,\alpha(\widetilde{\alpha})) \\ 
     \end{pmatrix}.
  \end{align*}
 Next, to simplify our notation, we call the $\widetilde{\alpha}$ variables 
 again $\alpha$. Also, to abbreviate our notation, define 
 \begin{equation*}
   \mu(z,w;h):= 
      \begin{pmatrix}
            E_{-+}(z_1) \\ 
            E_{-+}(z_2) \\ 
           \end{pmatrix}
    ~ \text{and} ~
    \tau(z,\alpha;h,\delta) := 
        \begin{pmatrix}
            T(z_1,\alpha) \\ 
            T(z_2,\alpha) \\ 
           \end{pmatrix}.
  \end{equation*}
  and 
 \begin{equation*}
   \partial_z\mu(z,w;h):= 
      \begin{pmatrix}
           (\partial_z E_{-+})(z_1) \\ 
           (\partial_z E_{-+})(z_2) \\ 
           \end{pmatrix}
    ~ \text{and} ~
    \partial_z\tau(z,\alpha;h,\delta) := 
        \begin{pmatrix}
            (\partial_zT)(z_1,\alpha) \\ 
            (\partial_zT)(z_2,\alpha) \\ 
           \end{pmatrix}.
  \end{equation*}
 \begin{rem}
  Recall that $T$ (cf. \eqref{eqn:PowerSeriesT_b}) depends on 
  $h$ and on $\delta$, though not explicit in the above notation.
  \par
  When we write $\partial_z \mu$ and $\partial_z \tau$ the 
  derivatives are to be understood component wise, each of which 
  only depends either on $z_1$ or $z_2$. 
 \end{rem}
 Hence, 
  \begin{align}
  \label{eqn:kthRandomVector}
    F^{\delta}(z,\alpha):=F(z,\alpha;\delta,h)
    &= 
     \begin{pmatrix}
           \mu(z,h,\delta)  \\ 
           \partial_{z}\mu(z,h,\delta) \\
    \end{pmatrix}
    -\delta U\widetilde{G}^{\frac{1}{2}} 
           \begin{pmatrix}
            \alpha_1 \\ 
            \vdots \\ 
            \alpha_{4} \\
           \end{pmatrix}
     +
     \begin{pmatrix}
            \tau(z,\alpha,h,\delta)  \\ 
           \partial_{z}\tau(z,\alpha,h,\delta)  \\
    \end{pmatrix}.  
  \end{align}
As noted in Remark \ref{rem:ET_estim}, $\mu$ and $\tau$ are smooth in $z$, 
and $\tau$ is holomorphic in $\alpha$. Moreover, $\tau$ satisfies the estimates
  \begin{equation}\label{eqn:TauEst}
   \tau_i = \mO\!\left(h^{-5/2}\delta^2\right), ~i =1,2 
   ~\text{and}~ 
   \partial_{z_i}\tau_i = \mO\!\left(h^{-7/2}\delta^2\right),
   ~i =1,2;
  \end{equation}
 and $\mu$ satisfies the estimates
 \begin{equation}\label{eqn:muEst}
   \mu_i= \mO\!\left(h^{1/2}\e^{-\frac{S}{h}}\right),
   ~~\partial_{z_i}\mu_i = \mO\!\left(h^{-1/2}\e^{-\frac{S}{h}}\right), ~i =1,2
  \end{equation}
 with $S$ as in \eqref{def_act}. 
 Finally, we perform the above described change of variables in 
 the integral \eqref{eqn:Ist}, and, using the fact that we chose 
 an orthonormal basis of the $\alpha$-space, we get that
 \begin{align*}
  H^{\delta}_{\varepsilon}(z_1,z_2,\alpha;h)= 
  \prod_{k=1}^2\varepsilon^{-2}\chi
    \left(\frac{F^{\delta}_k(z_k,\alpha)}{\varepsilon}\right)
    |F^{\delta}_{k+2}(z_k,\alpha)|^2.
 \end{align*}
 Next, let $\alpha=(\alpha_1,\alpha_2,\alpha') = (\widetilde{\alpha},\alpha')$ and 
 split the ball $B(0,R)$, $R=Ch^{-1}$, into two pieces: 
 pick $C_0>0$ such that $0 < C_1 < C_0 < C < 2C_0$, and define $R_0=C_0h^{-1}$. 
 Then, we perform the splitting: $I(z,h) = I_1(z,h) + I_2(z,h)$ with 
 \begin{align}\label{eqn:I1}
    I_1(z,h):= \lim\limits_{\varepsilon \rightarrow 0^+}
    \pi^{-N}\int\limits_{ \substack{B(0,R) \\ \lVert\alpha'
		    \rVert_{\C^{N-2}} \leq R_0} }
     H^{\delta}_{\varepsilon}(z_1,z_2,\alpha;h) \e^{-\alpha^*\alpha}L(d\alpha).
 \end{align}
 and 
 \begin{align}
 \label{eqn:I2}
    I_2(z,h):= \lim\limits_{\varepsilon \rightarrow 0^+}
    \pi^{-N}\int\limits_{ \substack{B(0,R) \\ R_0 <\lVert \alpha' 
		    \rVert_{\C^{N-2}} < R} }
    H^{\delta}_{\varepsilon}(z_1,z_2,\alpha;h) \e^{-\alpha^*\alpha}L(d\alpha).
 \end{align}
 \textbf{The integral $I_1$ } First, we perform a new change of variables 
 in the $\alpha$-space. Let $\beta_1, \dots ,\beta_N \in\C$ such that 
 \begin{equation*}
  \beta_1 =  F_1^{\delta}(z_1,\alpha),~ \beta_2 =  F_2^{\delta}(z_2,\alpha)
  ~\text{and}~ \beta_i = \alpha_i, ~\text{for } i =3,\dots,N.
 \end{equation*}
 We use the following notation: 
 $\beta=(\beta_1,\beta_2,\beta') = (\widetilde{\beta},\alpha') $. 
 It is sufficient to check that we can express 
 $\widetilde{\alpha}=(\alpha_1,\alpha_2)$ as 
 a function of $(\widetilde{\beta},\alpha') $. Therefore, we 
 apply Lemma \ref{lem:ImplFunThm} to the function 
 \begin{equation*}
  \mathcal{F}^{\delta}(z,\alpha) = 
  \begin{pmatrix}
   F_1^{\delta}(z_1,\alpha) \\ 
   F_2^{\delta}(z_2,\alpha)\\
  \end{pmatrix}.
 \end{equation*}
 where $\alpha$ plays the role of $(z,w)$ in the Lemma. In particular, 
 $\widetilde{\alpha}$ plays the role of 
 $w$. Let us check that the assumptions of Lemma \ref{lem:ImplFunThm} 
 are satisfied: $\mathcal{F}^{\delta}(z,\alpha)$ is by definition holomorphic 
 in $\alpha$. Using \eqref{eqn:kthRandomVector} and \eqref{eqn:UsqrtG}, we see that 
 its Jacobian, with respect to the variables $\widetilde{\alpha}$, is given by 
 \begin{equation}
  \label{eqn:der_beta}
  \frac{\partial \mathcal{F}(z,\alpha)}{\partial \widetilde{\alpha}} = 
      \frac{\partial \tau}{\partial\widetilde{\alpha}} 
		 - \delta A^{\frac{1}{2}}
 \end{equation}
 The Cauchy inequalities and (\ref{eqn:TauEst}) imply that 
 \begin{equation*}
   \frac{\partial \tau_i}{\partial\widetilde{\alpha}_j} 
      = \mO\left(\delta^2 h^{-\frac{3}{2}}\right), 
      \quad i,j =1,2.
 \end{equation*}
 This estimate is uniform in $\alpha\in B(0,R)$ and $(z_1,z_2)\in\supp\varphi$. 
 Expansion of the determinant yields that 
 \begin{equation}\label{eqn:Jacobian}
  \det\left( \frac{\partial \tau}{\partial\widetilde{\alpha}} 
		 - \delta A^{\frac{1}{2}}\right)
  = 
   \delta^2 \left(\sqrt{\det A} 
    + \mO\left(\delta h^{-\frac{3}{2}}\right)\right).
 \end{equation}
 Using that $A$ is self-adjoint, we see by Corollary \ref{cor:LB_MN_A} that 
 for $(z_1,z_2)\in\supp \varphi$
 \begin{equation}
  \label{eqn:lb_A}
  \lVert A^{-\frac{1}{2}}\rVert \leq 
  \frac{1}{\min\limits_{\lambda \in\sigma(A)}\sqrt{\lambda } }
  \leq \mO\!\left( h^{-\frac{1}{10}}\right).
 \end{equation}
 By the hypothesis \eqref{Hyp:Delta}, we have that $\delta \ll h^{7/2}$. Hence, 
 one gets that for all $\alpha\in B(0,R)$
 \begin{equation*}
  \delta^{-1} \lVert A^{-\frac{1}{2}} \rVert \cdot 
  \rVert \partial_{\widetilde{\alpha}}\tau \lVert 
  \leq \mO\!\left(\delta h^{-\frac{3}{2}-\frac{1}{10}}\right)
  \ll 1.
 \end{equation*}
 Hence $\mathcal{F}^{\delta}(z,\alpha)$ satisfies 
 the assumptions of Lemma \ref{lem:ImplFunThm}. 
 In the integral $I_1$ we restricted $\alpha'$ to the open ball 
 $\lVert\alpha'\rVert_{\C^{N-2}} < R_0$. It follows by Lemma 
 \ref{lem:ImplFunThm} that for all
 \begin{equation}
  \label{beta_sol_dom}
  \widetilde{\beta} \in B_{\C^2}\left(\mathcal{F}^{\delta}(z;0,\alpha'),r\right)
 \end{equation}
with 
\begin{align*}
 r:&= \left(
      \delta\lVert A^{-\frac{1}{2}}\rVert^{-1}
        (1 -\max\limits_{\alpha\in B(0,R)}\delta^{-1} 
        \lVert A^{-\frac{1}{2}} \rVert \cdot 
        \rVert \partial_{\widetilde{\alpha}}\tau \lVert  ) 
	\right)
	\sqrt{R^2-R_0^2}\notag \\
	&\geq
	 \frac{\delta h^{\frac{1}{10}-1}}{\mO(1)} > 0,
\end{align*}
the equation $\widetilde{\beta} =\mathcal{F}^{\delta}(z,\widetilde{\alpha},\alpha')$ 
has exactly one solution $\widetilde{\alpha}(\widetilde{\beta},\alpha';z)$ in the 
ball 
\begin{equation*}
 B\left(0,\sqrt{R^2-\lVert\alpha'\rVert_{\C^{N-2}}^2}\right)).
\end{equation*}
Moreover, the solution satisfies 
$\widetilde{\alpha}(\widetilde{\beta},\alpha';z)\in B(0,\sqrt{R^2-R_0^2})$, and it 
depends holomorphically on $\widetilde{\beta}$ and $\alpha'$ and is smooth in $z$. 
Using \eqref{eqn:kthRandomVector}, we see that the solution is implicitly given by 
\begin{align}
 \label{eqn:dens_11}
 \widetilde{\alpha}(\widetilde{\beta},\alpha') 
      &= 
      -\delta^{-1}A^{-\frac{1}{2}}\left(\widetilde{\beta} - 
      \nu(z,\widetilde{\alpha}(\widetilde{\beta},\alpha'),\alpha',h,\delta)\right).
\end{align}
with 
  \begin{equation*}
   \nu :=(\nu_1,\nu_2)^t := \mu(z,h)+
     \tau(z,\widetilde{\alpha}(\widetilde{\beta},\alpha'),\alpha',h,\delta)
  \end{equation*}
where $\tau$ satisfies the estimate (\ref{eqn:TauEst}).
Since the support of $\chi$ is compact (cf. Section \ref{sec:CountZero2}), we can restrict 
our attention to $\widetilde{\beta}$ and $\mathcal{F}^{\delta}(z;0,\alpha')$ 
in a small poly-disc of radius $K \varepsilon>0$ centered at $0$, with $K>0$ large 
enough such that $\supp\chi \subset D(0,K)$. By choosing 
$\varepsilon < \delta h/C$, $C>0$ large enough, we see that 
$\widetilde{\beta},\mathcal{F}^{\delta}(z;0,\alpha')\in D(0,K\varepsilon)\times D(0,K\varepsilon)$ 
implies \eqref{beta_sol_dom}.
\par
From \eqref{eqn:UsqrtG}, \eqref{eqn:kthRandomVector} and \eqref{eqn:dens_11}, 
it follows that  
   \begin{align}
    \label{eqn:dens_21}
   \begin{pmatrix}
    F^{\delta}_{3}(z,\widetilde{\alpha}(\widetilde{\beta},\alpha'),\alpha')  \\
    F^{\delta}_{4}(z,\widetilde{\alpha}(\widetilde{\beta},\alpha'),\alpha')  \\
   \end{pmatrix}
     = 
     \partial_z\nu  +  B^*A^{-1}(\widetilde{\beta} - \nu)
      - \delta\Gamma^{\frac{1}{2}}
      \begin{pmatrix}
       \alpha_3 \\
       \alpha_4  \\
   \end{pmatrix},
   \end{align}
  with 
  \begin{equation*}
   \partial_z\nu =(\partial_z\nu_1,\partial_z\nu_2)^t = 
   (\partial_z\mu)(z,h)+
     (\partial_z\tau)(z,\widetilde{\alpha}(\widetilde{\beta},\alpha'),\alpha',h,\delta)
  \end{equation*}
where $ \partial_z\tau$ satisfies the estimate given in (\ref{eqn:TauEst}). 
Furthermore, \eqref{eqn:der_beta} and \eqref{eqn:Jacobian} imply that 
\begin{equation}
 \label{eqn:Jacobian2}
 L(d{\widetilde{\alpha}}) =  
 \delta^{-4} \left(\sqrt{\det A} 
    + \mO\left(\delta h^{-\frac{3}{2}}\right)\right)^{-2}
    L(d{\widetilde{\beta}}) =: J(\widetilde{\beta},\alpha') L(d{\widetilde{\beta}}) 
\end{equation}
By performing this change of variables in the integral $I_1$ and by picking $\varepsilon>0$ 
small enough as above, we get that $I_1$ is equal to 
 \begin{align*}
   \lim\limits_{\varepsilon \searrow 0} 
    \pi^{-N} 
    \iint\limits_{
    \substack{
     \widetilde{\beta} \in D(0,K\varepsilon)\times D(0,K\varepsilon) \\
    (\tilde{\alpha}(\widetilde{\beta},\alpha'),\alpha')\in B(0,R) \\ 
	\lVert \alpha' \rVert_{\C^{N-2}} \leq R_0} }
    H^{\delta}_{\varepsilon}(z_1,z_2,\widetilde{\alpha}(\widetilde{\beta},\alpha'),\alpha';h)
    \e^{-\Phi(\widetilde{\beta},\alpha')}J(\widetilde{\beta},\alpha')
    L(d\alpha') L(d\widetilde{\beta}),
  \end{align*}
where 
\begin{equation*}
 \Phi(\widetilde{\beta},\alpha'):= \widetilde{\alpha}(\widetilde{\beta},\alpha')^*\cdot
 \widetilde{\alpha}(\widetilde{\beta},\alpha')+(\alpha')^*\cdot\alpha'.
\end{equation*}
The integrand of $I_1$ depends continuously on 
$\widetilde{\beta}$. Hence, by performing the limit $\varepsilon \rightarrow 0^+$, 
we get
 \begin{align}\label{eqn:Int1_1}
   I_1(z,h) =\pi^{-N}
   \int\limits_{ 
	 \substack{(\tilde{\alpha}(0,\alpha'),\alpha')\in B(0,R) \\ 
	\lVert \alpha' \rVert_{\C^{N-2}} \leq R_0} }
   H^{\delta}_{0}(z_1,z_2,\widetilde{\alpha}(0,\alpha'),\alpha';h)
    \e^{-\Phi(0,\alpha')}J(0,\alpha')
    L(d\alpha')
  \end{align}
with 
\begin{equation*}
 H^{\delta}_{0}(z_1,z_2,\widetilde{\alpha}(0,\alpha'),\alpha';h)
 = |F_{3}(z,0,\alpha') F_{4}(z,0,\alpha')|^2.
\end{equation*}
Using \eqref{eqn:dens_11}, one computes that
  \begin{equation*}
    \Phi(0,\alpha') = 
    \frac{1}{\delta^2}\nu^* A^{-1}\nu 
    +(\alpha')^*\cdot\alpha'
  \end{equation*}
and, using \eqref{eqn:dens_21}, we get
\begin{align}
 \label{eqn:rest_dens}
 \begin{pmatrix}
    F^{\delta}_{3}(z,\widetilde{\alpha}(0,\alpha'),\alpha')  \\
    F^{\delta}_{4}(z,\widetilde{\alpha}(0,\alpha'),\alpha')  \\
   \end{pmatrix}
     = 
     \partial_z\nu  - B^*A^{-1}\nu
      - \delta\Gamma^{\frac{1}{2}}
      \begin{pmatrix}
       \alpha_3 \\
       \alpha_4  \\
   \end{pmatrix},
   \end{align}
where $\nu = \nu(z,\widetilde{\alpha}(0,\alpha'),\alpha',h,\delta)$. 
Using \eqref{eqn:TauEst}, \eqref{eqn:muEst} and \eqref{eqn:lb_A} one 
computes that 
\begin{align}
 \label{eqn:Expo1}
 \lVert \widetilde{\alpha}(0,\alpha') \rVert^2 
 = \frac{1}{\delta^2}\nu^* A^{-1}\nu 
 \leq C h^{-\frac{1}{5}}
 \left[\mO\!\left(\delta^{-2} \e^{-\frac{2S}{h}}\right) 
	+ \mO\!\left(\delta^2h^{-5}\right)\right],
\end{align}
where the constant $C>0$ comes from the upper bound of 
$\lVert A^{-1/2} \rVert$ given in \eqref{eqn:lb_A}. By 
the Hypothesis \eqref{Hyp:Delta}, we conclude that 
\begin{align*}
 \lVert \widetilde{\alpha}(0,\alpha') \rVert^2 
 \ll h^{-\frac{1}{5}}.
\end{align*}
which implies that $(\tilde{\alpha}(0,0,\alpha'),\alpha')\in B(0,R)$ for 
all $\alpha'$ with $\lVert \alpha' \rVert_{\C^{N-2}} \leq R_0$. Hence, 
\begin{align}\label{eqn:Int1_2}
   I_1(z,h) =\pi^{-N}
   \int\limits_{\lVert \alpha' \rVert_{\C^{N-2}} \leq R_0}
    |F_{3}(z,0,\alpha') F_{4}(z,0,\alpha')|^2 
    \e^{-\Phi(0,\alpha')}J(0,\alpha')
    L(d\alpha').
  \end{align}
Next, we want to apply a multi-dimensional version of the mean value 
theorem for integrals to (\ref{eqn:Int1_2}). Indeed, let $U\subset\R^n$ 
be open, relatively compact and path-\-connected, it then holds true that 
for a continuous function $f:\overline{U}\rightarrow \R$ and a 
positive integrable function $g:\overline{U}\rightarrow \R$, 
there exists a $y\in\overline{U}$ such that 
\begin{equation*}
 f(y)\int_{U} g(x) dx =\int_{U}  f(x)g(x) dx.
\end{equation*}
Hence, the mean value theorem applied to (\ref{eqn:Int1_2}) yields that 
 \begin{equation*}
   I_1(z,h) = 
    \pi^{-N}J\e^{-\frac{\widetilde{\nu}^* A^{-1}\widetilde{\nu}}{\delta^2}}
      \int\limits_{ \lVert \alpha' \rVert_{\C^{N-2}} \leq R_0 }
     |F_{3}(z,0,\alpha') F_{4}(z,0,\alpha')|^2 
    \e^{-\alpha'\overline{\alpha'}}L(d\alpha').
  \end{equation*}
Here, $J$ denotes the evaluation of the Jacobian 
$J(0,\alpha')$ (cf. \eqref{eqn:Jacobian2}) at the intermediate 
point for $\alpha'$ given by mean value theorem. 
Note that $J$ depends smoothly on $z_1$ and $z_2$ because 
$\tau$ and $A$ do. 
\par
Similarly, $\widetilde{\nu}$ above denotes the evaluation of 
the function $\nu(z,\widetilde{\alpha}(0,\alpha'),\alpha',h,\delta)$ 
at the intermediate point for $\alpha'$ given by mean value theorem. It depends 
smoothly on $z_1$ and $z_2$ because $\mu$ and $\tau$ do. Moreover, using 
(\ref{eqn:TauEst}), we see that it satisfies 
  \begin{equation*}
   \widetilde{\nu} = \begin{pmatrix}
            E_{-+}(z_1) \\ 
            E_{-+}(z_2) \\
           \end{pmatrix}
           +\mO\left(\delta^2h^{-\frac{5}{2}}\right).
  \end{equation*}
In remains to study the integral	
\begin{equation}\label{eq_ad7}
 \widetilde{I}_1(z,h):= \pi^{-N}\int\limits_{ \lVert \alpha' \rVert_{\C^{N-2}} \leq R_0 }
     |F_{3}(z,0,\alpha') F_{4}(z,0,\alpha')|^2
    \e^{-\alpha'\overline{\alpha'}}L(d\alpha').
\end{equation}
Define the linear forms
\begin{equation*}
 l_1(\alpha') = [\Gamma^{\frac{1}{2}}]_{11}\alpha_3 +
	       [\Gamma^{\frac{1}{2}}]_{12}\alpha_4
	       , \quad 
 l_2(\alpha') = [\Gamma^{\frac{1}{2}}]_{21}\alpha_3 +
	       [\Gamma^{\frac{1}{2}}]_{22}\alpha_4.	       
\end{equation*}
Using \eqref{eqn:rest_dens}, we get that 
\begin{equation}\label{eq_ad4}
\begin{split}
 &F_{3}(z,0,\alpha') = 
 ( \partial_z\nu  - B^*A^{-1}\nu)_1 
  - \delta l_1(\alpha')
  = 
   \mO\!\left(
    h^{-\frac{3}{5}}\e^{-\frac{S}{h}} + \delta^2h^{-\frac{37}{10}}\right) 
    - \delta l_1(\alpha'), \\
  &F_{4}(z,0,\alpha') = 
 ( \partial_z\nu  - B^*A^{-1}\nu)_2 
  - \delta l_2(\alpha')
  = 
   \mO\!\left(
    h^{-\frac{3}{5}}\e^{-\frac{S}{h}} + \delta^2 h^{-\frac{37}{10}}\right) 
    - \delta l_2(\alpha'),
    \end{split}
\end{equation}
where the error estimate is uniform in $\alpha'$, for $\lVert \alpha' \rVert_{\C^{N-2}} \leq R_0$. 
In the last equation we used \eqref{eqn:TauEst}, \eqref{eqn:muEst}, \eqref{eqn:lb_A} 
and the fact that the Hilbert-Schmidt norm of $B^*$ is $\leq \frac{1}{h\mO(1)}$ which 
follows from the fact that elements of the matrix $B^*$ are bounded by a term of order 
$h^{-1}$. 
\par
By Proposition \ref{prop:Trace_est_Gam}, one gets that the Hilbert-Schmidt norm of 
$\Gamma^{\frac{1}{2}}$ is bounded, indeed one has that 
\begin{equation*}
 \lVert \Gamma^{\frac{1}{2}} \rVert_{\mathrm{HS} } 
 = \sqrt{\tr\Gamma} \leq \mO(h^{-\frac{1}{2}}).
\end{equation*}
Hence, the linear forms $l_i(\alpha')$, $i=1,2$, satisfy 
\begin{equation*}
 |l_i(\alpha')| \leq \mO(h^{-\frac{1}{2}}) \|(\alpha_3,\alpha_4)\|.
\end{equation*}
Using \eqref{eq_ad4}, we compute that
\begin{equation}\label{eq_ad5}
  |F_{3}(z,0,\alpha') F_{4}(z,0,\alpha')|^2  
  =\delta^{4}\left(|l_1(\alpha')l_2(\alpha')|^2
  + \mO\!\left(\e^{-\frac{1}{Ch}}+ 
    \delta h^{-\frac{52}{10}}\right)\sum_{j=0}^3\|(\alpha_3,\alpha_4)\|^j
    \right).
\end{equation}
Here we used as 
well that by Hypothesis \ref{Hyp:H7},  
we have that $\mO(\delta^{-1}\e^{-\frac{S}{h}}) = \mO(\e^{-\frac{1}{Ch}})$. 
Observe that since $C>C_0>C_1>0$, see the discussion before \eqref{eqn:I1}, 
we have that for $k=0,\dots,4$
\begin{equation}\label{eq_ad6}
   \pi^{-N}
      \int\limits_{ \lVert \alpha' \rVert_{\C^{N-2}} \geq R_0} 
    \|(\alpha_3,\alpha_4)\|^k
    \e^{-\alpha'\overline{\alpha'}}L(d\alpha')
    \leq  \mO\!\left(\e^{-\frac{D}{h^2}}\right).
  \end{equation}
Technically this holds true if the difference $C_0-C_1>0$ is assumed to be 
sufficiently large. Notice that we have room for that if we take $C>0$ 
in \eqref{eq_ad5.1} large enough to begin with and choose $C_0$ in 
the discussion before \eqref{eqn:I1} sufficiently large.
\par
Extend the function $|F_{3}(z,0,\alpha') F_{4}(z,0,\alpha')|^2$ in the variables $\alpha'$ 
to the whole of $\C^{N-2}$ by a function such that \eqref{eq_ad5} holds 
for all $\alpha'\in\C^{N-2}$. Hence, by \eqref{eq_ad6}, \eqref{eq_ad7} 
\begin{equation*}
 \widetilde{I}_1(z,h)= \delta^4 
    \pi^{-N} \int_{ \C^{N-2} }
    |l_1(\alpha')l_2(\alpha')|^2 
    \e^{-\alpha'\overline{\alpha'}}L(d\alpha') 
    + \mO\!\left(\delta^4 \e^{-\frac{1}{Ch}}+ \delta^5 h^{-\frac{52}{10}}\right) .
\end{equation*}
Integration by parts yields that 
\begin{equation*}
   \pi^{-N}
      \int_{\C^{N-2}} 
    |l_1(\alpha')l_2(\alpha')|^2 
    \e^{-\alpha'\overline{\alpha'}}L(d\alpha')
    = 
    \pi^{-4}\int_{\C^{2}}\e^{-\widetilde{\alpha}\overline{\widetilde{\alpha}}}
    \prod_{k=1}^2
    l_k(\overline{\partial_{\widetilde{\alpha}}})
    \left(\prod_{n=1}^2
    \overline{l_n}(\overline{\widetilde{\alpha}})\right)
    L(d\widetilde{\alpha}).
  \end{equation*}
Note that for any permutation $\sigma\in S_n$, where 
$S_n$ is the symmetric group, we have that 
$(l_i|l_{\sigma(i)}) = \Gamma_{i\sigma(i)}$. Thus, in 
view of \eqref{def:Per}, we have that 
\begin{equation*}
\prod_{k=1}^2
    l_k(\overline{\partial_{\widetilde{\alpha}}})
     \left(\prod_{n=1}^2
    \overline{l_n}(\overline{\widetilde{\alpha}})\right)
    =
    \sum_{\sigma\in S_2} (l_1|l_{\sigma(1)}) (l_2|l_{\sigma(2)})
    = 
    \perm \Gamma.
\end{equation*}
We conclude that 
\begin{align*}
   I_1(z,h) &=
    \frac{\perm\Gamma\
  + \mO\!\left(\e^{-\frac{1}{Ch}}+ 
    \delta h^{-\frac{52}{10}}\right)}{\pi^{2}\left(\sqrt{\det A} 
    + \mO\left(\delta h^{-\frac{3}{2}}\right)\right)^2},
  \end{align*}
 where we used the fact that $\det A \geq \frac{h^{\frac{1}{5}}}{\mO(1)}$ 
 for $1/C \geq |z-w| \gg h^{3/5}$, see Proposition \ref{prop:mat_Gamma}, 
 to obtain the last equality.
 \\
 \\
\textbf{The integral $I_2$ }  
In this step we will estimate the second integral of equation (\ref{eqn:I2}). 
Therefore, we will increase the space of integration 
  \begin{align*}
  &\pi^{-N}\int\limits_{ \substack{B(0,R) \\ R_0 <\lVert \alpha' 
		    \rVert_{\C^{N-2}} < R} }
    \prod_{k=1}^2\varepsilon^{-2}\chi
    \left(\frac{F_k(z,\alpha)}{\varepsilon}\right)
    |\partial_{z_k}F_{k}(z,\alpha)|^2 \e^{-\alpha\overline{\alpha}}L(d\alpha)
    \notag \\
    &\leq
    \pi^{-N}\int\limits_{ \substack{B(0,2R) \\ R_0 <\lVert \alpha' 
		    \rVert_{\C^{N-2}} < 2R_0} }
    \prod_{k=1}^2\varepsilon^{-2}\chi
    \left(\frac{F_k(z,\alpha)}{\varepsilon}\right)
    |\partial_{z_k}F_{k}(z,\alpha)|^2 \e^{-\alpha\overline{\alpha}}L(d\alpha)
    =: W_{\varepsilon}.
  \end{align*}
It is easy to see that Lemma \ref{lem:ImplFunThm} holds true for the set 
$B(0,2R) \cap \{ R_0 < \lVert\alpha'\rVert_{\mathds{C}^{N-2}} < 2R_0\}$. 
Therefore, we can proceed as for the integral $I_1$: 
perform the same change of variables and perform the limit of 
$\varepsilon\rightarrow 0$. As for $I_1$, the integrand remains 
bounded by at most a finite power of $h^{-1}$ which then yields that 
  \begin{align*}
   \lim\limits_{\varepsilon\rightarrow 0} \,W_{\varepsilon}   
       = \mathcal{O}\!\left(\e^{-\frac{D}{h^2}}\right),
\end{align*}
where the exponential decay comes from the fact that 
$R_0 <\lVert \alpha'\rVert_{\C^{N-2}}$. Therefore, 
\begin{equation*}
 \int_{\C^2}\varphi_1(z_1)\varphi_2(z_2) d\nu(z_1,z_2)
 = \int_{\C^2}\varphi_1(z_1)\varphi_2(z_2) D(z,h)L(dz_1dz_2) 
\end{equation*}
with 
\begin{equation*}
 D(z,h,\delta) 
 = \frac{\perm\Gamma\
  + \mO\!\left(\e^{-\frac{1}{Ch}}+ 
    \delta h^{-\frac{52}{10}}\right)}{\pi^{2}\left(\sqrt{\det A} 
    + \mO\left(\delta h^{-\frac{3}{2}}\right)\right)^2}
    + \mO\!\left(\e^{-\frac{D}{h^2}}\right). \qedhere
\end{equation*}
\end{proof}
\section{Proof of the main results}\label{sec:PMR} 
Using the above results, in particular Propositions \ref{prop:2ptCorrelation} and 
\ref{prop:perm_Gamma}, we can now prove Theorem \ref{thm_H2}, 
Theorem \ref{prop:H10}, Theorem \ref{prop:H11} and Corollary \ref{cor1}.
\begin{proof}[of Theorem \ref{thm_H2}]
 The result follows directly from Proposition \ref{prop:2ptCorrelation} 
 with the density $D$ given by Proposition \ref{prop:perm_Gamma} and 
 by Proposition \ref{prop:detA}.
\end{proof}
\begin{proof}[of Theorem \ref{prop:H10}]
 First, let us treat the case of the long range interaction: 
 we suppose that $|z-w| \gg (h\ln h^{-1})^{\frac{1}{2}}$. Here, 
 we have that for any power $N>1$ the term 
 \begin{equation*}
  \left(\frac{\sigma_h(z,w)|z-w|^2}{4h}\right)^N \e^{-K(z,w)}
 \end{equation*}
 remains bounded. Using that $\sinh K(z,w)\geq \mO(h^{-C}) >0 $ 
 with $C\gg 1$ and using that $\sigma_h(z,z) = \sigma(z) + \mO(h)$, it follows that 
 \begin{equation*}
  D^{\delta}(z,w;h) = \frac{\sigma(z)\sigma(w) + \mO(h)}{(2h\pi)^{2}}
    \left(1 + \mO\!\left( \delta h^{-\frac{8}{5}}\right)\right).
 \end{equation*}
 Next, we consider the case where $h^{\frac{4}{7}}\ll |z-w| \ll h^{\frac{1}{2}}$. 
 Recall from  Theorem \ref{thm_H2} that 
  \begin{align}\label{de.1}
   D^{\delta}(z,w;h) = \frac{\Lambda(z,w)}{(2\pi h)^{2}\left(1- \e^{-2K(z,w)}\right)}
    \left(1 
      + \mO\!\left(\delta h^{-\frac{8}{5}}\right)\right)
    + \mO\!\left(\e^{-\frac{D}{h^2}}\right)
  \end{align}
  with $\Lambda(z,w;h)$ equal to 
   \begin{align*}
   &\sigma_h(z,z)\sigma_h(w,w) + \sigma_h(z,w)^2(1 + \mO(|z-w|))
	      \e^{-2K(z,w)}+\mO\!\left(h^{\infty} + \delta h^{-\frac{32}{10}}\right)
	  \notag \\
	  &+ 
	  \frac{\sigma_h(z,w)^2(1 + \mO(|z-w|)) }{\e^{K(z,w)}\sinh K(z,w)}
	  \left(
	  \left(\frac{\sigma_h(z,w)|z-w|^2}{4h}\right)^2 2\coth K(z,w)
	  - \frac{\sigma_h(z,w)|z-w|^2}{h} 
	  \right).
 \end{align*}
 Similarly to \eqref{eqn:Tay3}, we have that $\sigma_h(z,z) =  \sigma_h(z,w)(1 + \mO(|z-w|)$. 
 We start by considering the first term in \eqref{de.1}:
 \begin{equation}\label{de.2}
  \frac{\Lambda(z,w)}{(2\pi h)^{2}\left(1- \e^{-2K(z,w)}\right)}.
 \end{equation}
 Set $\sigma_h = \sigma_h(z,w)$. Using the Taylor expansions of the functions $\sinh x$, $\coth x$ 
 and $\e^{-x}$, one computes, that \eqref{de.2} is equal to
 \begin{align*}
  &
  \frac{1}{h\pi^2\sigma_h|z-w|^2\left(1+ \mO\!\left(\frac{|z-w|^2}{h}\right)\right)}
  \Bigg[ \sigma_h^2\left(1+ \mO\!\left(|z-w|\right)\right) 
  -\frac{ \sigma_h^3|z-w|^2}{4h}\left(1+ \mO\!\left(|z-w|\right)\right)
  \notag \\
  &+
  \frac{ \sigma_h^4|z-w|^4}{4^2h^2}\left(1+ \mO\!\left(\frac{|z-w|^2}{h}\right)\right)
  + \left\{
  \frac{\sigma_h^4|z-w|^4}{3\cdot4^4h^2}\left(1+ \mO\!\left(\frac{|z-w|^4}{h^2}\right)\right)
  -1\right\}\cdot
  \notag \\
  &
  \cdot
  \left.
  \frac{\sigma_h^2\left(1 -\frac{\sigma_h|z-w|^2}{4h}\left(1+ \mO\!\left(|z-w|\right)\right)
  +\frac{\sigma_h^2|z-w|^4}{2\cdot4^2h}\left(1+ \mO\!\left(\frac{|z-w|^2}{h}\right)\right)
  \right)}
  {1+ \mO\!\left(|z-w|\right)+
  \frac{\sigma_h^2|z-w|^4}{4^2\cdot6h}\left(1+ \mO\!\left(\frac{|z-w|^2}{h}\right)\right)}
  +\mO\!\left(h^{\infty} + \delta h^{-\frac{32}{10}}\right)
  \right]
 \end{align*}
 which simplifies to 
 \begin{align*}
 \Lambda(z,w;h)=
  \frac{\sigma_h^3|z-w|^2}{(4\pi)^2h^3}\left(1+ \mO\!\left(\frac{|z-w|^2}{h}\right)\right).
 \end{align*}
 Hence, 
 \begin{equation*}
  D^{\delta}(z,w;h) 
  =  \frac{\sigma_h^3|z-w|^2}{(4\pi)^2h^3}\left(1+ 
  \mO\!\left(\frac{|z-w|^2}{h}+\delta h^{-\frac{8}{5}}\right)\right)
 \end{equation*}
 which concludes the proof.
\end{proof}.
\begin{proof}[of Theorem \ref{prop:H11}]
 Using that $\sigma_h(z,w_0) =  \sigma_h(z,z)(1 + \mO(|z-w_0|)$ (cf. \eqref{eqn:Tay3} 
 and \eqref{eqn:AbrSigh}), the result of Theorem \ref{prop:H11} follows 
 from Theorems \ref{thm_H2} and \ref{prop:H10}.
\end{proof}
\begin{proof}[of Corollary \ref{cor1}]
Let $ W\Subset\{(z,w)\in\C^2; z\neq w\}$ be compact. 
Recall from the discussion at the beginning of Section \ref{se_SL} that, 
for $h>0$ small enough, 
\begin{equation*}
	\widetilde{\kappa}_h(z,w) : = \kappa^{\delta}(z_0+d_0^{-1/2}z,z_0+d_0^{-1/2}w;h),
\end{equation*}
is well defined for all $(z,w)\in W$, where 
$d_0:=d(z_0;h)\asymp h^{-1}$, see \eqref{eq_i30}, \eqref{eq_b1}. Using 
Theorems \ref{thm_H2} and \ref{prop:H11} we see that 
  \begin{equation*}
  \begin{split}
 \kappa^{\delta}(z_0+&d_0^{-1/2}z,z_0+d_0^{-1/2}w;h) \\
   & =
   \frac{(1+\mO(h))( (\sinh^2 K + (1 + \mO(h^{1/2}))K^2)\cosh K -(1 + \mO(h^{1/2}))2K\sinh K)
   }{\sinh^3 K}
	  \\
	  &\phantom{++}
	  +\frac{\mO\!\left(h^{\infty} + \delta h^{-\frac{32}{10}}\right)}{\left(1- \e^{-2K}\right)}
    + \mO\!\left(\e^{-\frac{D}{h^2}}\right),
 \end{split}
 \end{equation*}
 with 
 \begin{equation*}
 \begin{split}
   K&=K(z_0+d_0^{-1/2}z,z_0+d_0^{-1/2}w;h)\\ 
   &=
   \sigma_h(z_0+d_0^{-1/2}z,z_0+d_0^{-1/2}w)\frac{|z-w|^2}{4hd_0}
   (1+\mO(h^{1/2}))
   \\
   &= \frac{\pi}{2}|z-w|^2(1+\mO(h^{1/2})),
    \end{split}
 \end{equation*}
 where the error estimates are uniform in $W$. Here, we used as well that $d_0=(2\pi h)^{-1}\sigma(z_0)(1+\mO(h))$, cf. \eqref{eq_i30}, and that by Taylor expansion 
 $\sigma_h(z_0+d_0^{-1/2}z,z_0+d_0^{-1/2}w)=\sigma(z_0)(1+\mO(h^{1/2}))$. Taking 
 the limit $h\to0^+$ we conclude the statement of the Corollary.
\end{proof}
\providecommand{\bysame}{\leavevmode\hbox to3em{\hrulefill}\thinspace}
\providecommand{\MR}{\relax\ifhmode\unskip\space\fi MR }
\providecommand{\MRhref}[2]{%
  \href{http://www.ams.org/mathscinet-getitem?mr=#1}{#2}
}
\providecommand{\href}[2]{#2}

\end{document}